\documentclass[a4paper,11pt]{article}
\usepackage[utf8]{inputenc}
\usepackage{a4wide}
\usepackage{algorithm}
\usepackage{algorithmic}
\usepackage{latexsym,amsfonts,amsmath,amssymb,mathrsfs,url,amsthm}
\usepackage{aliascnt}
\usepackage{mathtools}
\usepackage{dsfont}
\usepackage{color,graphicx}
\usepackage{flafter}
\usepackage{placeins}
\usepackage{lipsum}
\usepackage{subcaption}
\usepackage{hyperref}
\usepackage{cleveref}
\usepackage{xcolor}

\newtheorem{theorem}{Theorem}[section]

\newaliascnt{lemma}{theorem}
\newtheorem{lemma}[lemma]{Lemma}
\aliascntresetthe{lemma}

\newaliascnt{example}{theorem}
\newtheorem{example}[example]{Example}
\aliascntresetthe{example}

\newaliascnt{remark}{theorem}
\newtheorem{remark}[remark]{Remark}
\aliascntresetthe{remark}

\newaliascnt{definition}{theorem}
\newtheorem{definition}[definition]{Definition}
\aliascntresetthe{definition}

\newaliascnt{corollary}{theorem}
\newtheorem{corollary}[corollary]{Corollary}
\aliascntresetthe{corollary}

\newaliascnt{proposition}{theorem}
\newtheorem{proposition}[proposition]{Proposition}
\aliascntresetthe{proposition}

\crefname{theorem}{theorem}{theorems}
\crefname{lemma}{lemma}{lemmas}
\crefname{example}{example}{examples}
\crefname{remark}{remark}{remarks}
\crefname{definition}{definition}{definitions}
\crefname{corollary}{corollary}{corollaries}
\crefname{proposition}{proposition}{propositions}

\mathtoolsset{showonlyrefs}

\newcommand{\bsa}{\boldsymbol{a}}
\newcommand{\bsx}{\boldsymbol{x}}
\newcommand{\bsy}{\boldsymbol{y}}
\newcommand{\bsz}{\boldsymbol{z}}
\newcommand{\EE}{\mathbb{E}}
\newcommand{\NN}{\mathbb{N}}
\newcommand{\PP}{\mathbb{P}}
\newcommand{\RR}{\mathbb{R}}
\newcommand{\Ncal}{\mathcal{N}}
\newcommand{\Xcal}{\mathcal{X}}
\newcommand{\Ycal}{\mathcal{Y}}
\DeclareMathOperator{\supp}{supp}

\allowdisplaybreaks

\title{Quasi-uniform designs from random candidates: optimal candidate complexity and farthest-point sampling\thanks{The authors contributed equally to this work.}}
\author{Takashi Goda\thanks{Graduate School of Engineering, The University of Tokyo, 7-3-1 Hongo, Bunkyo-ku, Tokyo 113-8656, Japan (\url{goda@frcer.t.u-tokyo.ac.jp}; \url{jokokun@g.ecc.u-tokyo.ac.jp})} \and Hengjun Xu\footnotemark[2]}
\date{\today}

\begin{document}

\maketitle

\begin{abstract}
    We study randomized constructions of quasi-uniform designs on a compact metric-measure space satisfying two-sided polynomial ball-growth conditions. Given integers $N\le M$, we first draw $M$ independent candidate points and then retain the first $N$ points of a farthest-point traversal of the candidate set. We prove probabilistic non-asymptotic bounds on the mesh ratio and show that $M=\Theta(N\log N)$ is the sharp order of the candidate-pool size required for bounded mesh ratio: a sufficiently large multiple of $N\log(N/\delta)$ suffices with probability at least $1-\delta$, whereas, if $M=o(N\log N)$, the mesh ratio diverges in probability for every procedure that selects $N$ points from the same independent candidate pool. If $M/(N\log M)\to\infty$, the upper bound on the mesh ratio for the exact farthest-point sampling (FPS) tends to $2$. We also quantify the effect of approximate FPS and analyze direct and fast implementations in spaces of bounded doubling dimension. Finally, using a single infinite stream of candidate points and increasing candidate budgets, we construct an almost surely quasi-uniform nested sequence; under supercritical oversampling and exact FPS, or more generally when the approximation factors tend to $1$, its mesh-ratio limit superior equals $2$ for compact positive-volume subsets of Euclidean space.
\end{abstract}
\noindent \textbf{Keywords:} quasi-uniform designs, farthest-point sampling, random covering, mesh ratio, candidate-pool complexity

\noindent \textbf{2020 Mathematics Subject Classification:} Primary 52C17; Secondary 60D05, 65D18, 65Y20

\section{Introduction}\label{sec:introduction}

Finite point sets with good geometric distribution properties are fundamental in scattered data approximation, kernel interpolation, meshless numerical methods, and the design of computer experiments \cite{FLS06,SWN03,SW06,W05}. Let $(\Xcal,d)$ be a compact metric space and let $X_N=\{\bsx_1,\ldots,\bsx_N\}\subset\Xcal.$
The covering and separation radii of $X_N$ in $\Xcal$ are defined by
\[
h_{\Xcal}(X_N)
:=
\sup_{\bsx\in\Xcal}\min_{1\le i\le N}d(\bsx,\bsx_i)\quad \text{and}\quad q(X_N)
:=
\frac{1}{2}\min_{1\le i<j\le N}d(\bsx_i,\bsx_j),
\]
respectively. The ratio
\[
\rho_{\Xcal}(X_N)
:=
\frac{h_{\Xcal}(X_N)}{q(X_N)}
\]
is called the mesh ratio. A small covering radius ensures that the domain is well covered, while a large separation radius prevents undesirable clustering of the points. A sequence of point sets is called \emph{quasi-uniform} if its mesh ratios are uniformly bounded.

Quasi-uniformity provides a natural balance between the minimax and maximin distance criteria that are commonly used in space-filling design \cite{JMY90,J16,MM95,P17,PM12}. It is also an important geometric condition in scattered data approximation: the covering radius determines the approximation scale, while the separation radius affects numerical stability and conditioning. Consequently, quasi-uniform point sets play a central role in radial basis function and kernel approximation, as well as in meshless methods for partial differential equations \cite{DSW05,SW06,W05,WSH21}. On an $s$-dimensional regular domain, a quasi-uniform family has both covering and separation radii of the optimal order $N^{-1/s}$.

A simple and attractive procedure for generating well-distributed points is the greedy-packing algorithm. Starting from an arbitrary point $\bsx_1\in\Xcal$, it recursively chooses
\[
\bsx_{n+1}
\in
\operatorname*{arg\,max}_{\bsx\in\Xcal}d(\bsx,X_n) 
\quad \text{with}\quad 
d(\bsx,X_n):=\min_{1\le i\le n}d(\bsx,\bsx_i).
\]
For a finite metric space, this construction is usually referred to as a \emph{farthest-first traversal}, a \emph{greedy permutation}, or \emph{farthest-point sampling} (FPS). It is closely related to the classical greedy algorithm of Gonzalez for metric clustering \cite{EHS20,G85}. Pronzato and Zhigljavsky \cite{PZ23} recently established a sharp quasi-uniformity result for greedy packing. In particular, every point set generated by the exact continuous-domain algorithm has mesh ratio at most~$2$. They also proved that, for every nested sequence in a compact positive-volume subset of Euclidean space, the limit superior of the mesh ratio is at least~$2$. Thus the uniformity constant~$2$ attained by greedy packing is optimal among all nested constructions.

Despite this strong theoretical property, the continuous-domain algorithm is not immediately implementable on a general domain. At each iteration, one must identify a center of a largest empty ball relative to the current point set and the boundary of the domain. This can be accomplished by exploiting Voronoi geometry in certain special settings; see, for example, \cite{XG26} for triangular domains. For a general compact domain, however, a direct and geometry-independent procedure for finding the exact maximizer may be unavailable or computationally expensive.

A natural practical alternative is to replace the continuous search space by a finite candidate set. Given $\Ycal_M=\{Y_1,\ldots,Y_M\}\subset\Xcal$, one recursively chooses
\[
\bsx_{n+1}
\in
\operatorname*{arg\, max}_{\bsy\in\Ycal_M}d(\bsy,X_n)
\]
and stops after $N$ points have been selected. This finite-candidate version is easy to implement and requires only evaluations of the metric. Although Pronzato and Zhigljavsky \cite{PZ23} showed, in a deterministic setting, that the performance on the entire domain can be controlled by the covering radius $h_{\Xcal}(\Ycal_M)$ of the candidate set, the result does not specify how the candidate set should be generated or how large it must be relative to the desired output size~$N$.

In this paper, we generate the candidate set randomly. More precisely, we equip $\Xcal$ with a Borel probability measure $\mu$, draw $Y_1,\ldots,Y_M\overset{\mathrm{iid}}{\sim}\mu$, and apply exact or approximate FPS to the resulting candidate cloud. The construction requires only a sampling procedure for $\mu$, a distance oracle, and the integers $M$ and $N$. In particular, it avoids repeatedly solving a continuous largest-empty-ball problem and applies without an explicit parametrization of the boundary.

Our main assumptions concern the small-ball behavior of the sampling measure. We suppose that there exist $s>0$ and positive constants $c_-$, $c_+$, and $r_0$, such that
\[
c_{-}r^s
\le
\mu(B(\bsx,r))
\le
c_{+}r^s
\]
holds for any $\bsx\in\Xcal$ and $0<r\le r_0$, where $B(\bsx,r)$ denotes the closed metric ball with its center at $\bsx$ and radius $r$. This includes compact Ahlfors-regular metric-measure spaces, regular full-dimensional Euclidean domains, and compact manifolds with their natural volume measures. For $M$ independent random points on such a space, the
covering radius has the characteristic order
\[
h_{\Xcal}(\Ycal_M)
\asymp
\left(\frac{\log M}{M}\right)^{1/s},
\]
see \cite{J87,RS16} for classical and general results on random coverings. Since the natural covering-radius scale of a quasi-uniform $N$-point design is $N^{-1/s}$, balancing the two scales yields the candidate-complexity threshold $M\asymp N\log N$.

This logarithmic factor has the same local-occupancy origin as connectivity thresholds in random geometric graphs. At the target resolution $r\asymp N^{-1/s}$, an $s$-regular space contains on the order of $N$ essentially disjoint regions, each having probability mass on the order of $N^{-1}$. Eliminating all empty regions therefore has the coupon-collector scale $M\asymp N\log N$. Equivalently, the critical local-occupancy parameter is $Mr^s\asymp\log M$, which is also the scale at which isolated vertices disappear and connectivity emerges in standard random geometric graphs \cite{P99,PY25}. The events are nevertheless different: our analysis requires coverage of the ambient space rather than connectivity among the sampled vertices.

Now, the main contributions of this paper are summarized as follows.

\begin{enumerate}
\item
We establish non-asymptotic probabilistic bounds on the mesh ratio for exact FPS. In a simplified form, our result states that there exists a constant $C>0$, depending only on the regularity constants of
$(\Xcal,d,\mu)$, such that, with probability at least $1-\delta$,
\[
    \max_{2\le n\le N}\rho_{\Xcal}(X_n)
    \le
    \frac{2}{1-C\left(\dfrac{N\log(M/\delta)}{M}\right)^{1/s}}
\]
provided that the denominator is positive. In particular, a sufficiently
large multiple of $N\log(N/\delta)$ candidates is enough to obtain a bounded mesh ratio with probability at least $1-\delta$. If $M/(N\log M)\to\infty$, the resulting mesh-ratio upper bound tends to~$2$.

\item
We prove that the order $N\log N$ is sharp within the iid candidate model.
More precisely, with probability tending to~$1$,
\[
\inf_{Z_N\subseteq\Ycal_M,\, |Z_N|=N}
\rho_{\Xcal}(Z_N)
\ge
c\left(\frac{N\log M}{M}\right)^{1/s}
\]
for a constant $c>0$. This lower bound applies to every possible procedure
that selects $N$ points from the candidate cloud, and not only to FPS.
Consequently, if $N\le M=o(N\log N)$, then the mesh ratio diverges in probability for every candidate-thinning
algorithm. Hence $M=\Theta(N\log N)$ is the sharp candidate-pool order for
constructing point sets with bounded mesh ratio from iid candidates.

\item
We extend the analysis to approximate FPS. Suppose that, for some $0<\alpha\le1$, the point selected at iteration $n$ satisfies
\[
d(\bsx_{n+1},X_n)
\ge
\alpha
\max_{\bsy \in\Ycal_M}d(\bsy,X_n).
\]
We show that the mesh-ratio bound for exact FPS is enlarged essentially by the factor $1/\alpha$. This separates the error caused by the random discretization of the domain from the error caused by the approximate farthest-point search.

A direct implementation of exact FPS uses $O(MN)$ distance evaluations and $O(M)$ storage, which becomes $O(N^2\log N)$ work at the critical iid candidate size. In spaces of bounded doubling dimension, near-linear algorithms are available for constructing exact or approximate greedy permutations \cite{EHS20,HM06}. Combining such algorithms with our candidate-complexity result leads, for fixed approximation accuracy and intrinsic dimension, to an end-to-end cost of order $O(N\log^2N)$.

\item
Finally, we develop a nested version of our randomized construction. We use a single infinite iid stream $Y_1,Y_2,\ldots$ and an increasing sequence of candidate budgets $M_n$. At step $n$, the next point is selected approximately farthest from the current design among the first $M_n$ candidates. A sufficiently large choice $M_n\asymp n\log n$ yields an almost surely quasi-uniform nested sequence. If $M_n/(n\log n)\to \infty$ and the approximation factors $\alpha_n$ tend to~$1$, then
\[
\limsup_{n\to\infty}\rho_{\Xcal}(X_n)\le2
\quad\text{almost surely}.
\]
For compact positive-volume subsets of Euclidean space, the lower bound of Pronzato and Zhigljavsky then implies that this limit superior is exactly~$2$.
\end{enumerate}

The mechanism behind the upper bounds is simple: the random candidate cloud provides a sufficiently fine discretization of the domain, and FPS converts this covering property into simultaneous covering and separation guarantees for the selected subset. The matching lower bound shows, however, that the logarithmic oversampling factor is not merely an artifact of the analysis or of the greedy selection rule. It is caused by the largest empty regions inherent in iid sampling and cannot be removed by using a different thinning algorithm. 

The remainder of the paper is organized as follows. In \Cref{sec:preliminaries}, we introduce the metric-measure assumptions, geometric quantities, and exact and approximate FPS rules. In \Cref{sec:exact}, we establish deterministic candidate-set estimates, derive non-asymptotic probabilistic bounds, and prove the sharp $N\log N$ candidate-complexity result. In \Cref{sec:approximate}, we study approximate FPS and its direct and fast implementations, and place FPS and maximal independent sets in a common metric-net framework. Although exact FPS is recovered from approximate FPS by setting all approximation factors equal to~$1$, we treat the exact and approximate cases separately. This entails some overlap between \Cref{sec:exact,sec:approximate}, but keeps the sharp candidate-complexity argument transparent and isolates the effect of inexact farthest-point searches. The nested randomized construction and its almost-sure properties are analyzed in \Cref{sec:nested}. Numerical experiments are presented in \Cref{sec:numerics}, and concluding remarks are given in \Cref{sec:conclusion}.

\section{Preliminaries}\label{sec:preliminaries}

Throughout this paper, $(\Xcal,d)$ denotes a compact metric space with positive diameter
\[
D_{\Xcal}:=\operatorname{diam}(\Xcal)>0,
\]
and $\mu$ denotes a non-atomic Borel probability measure with $\supp(\mu)=\Xcal$.  The full-support assumption is necessary for iid candidates to cover the prescribed domain, while non-atomicity ensures that independently sampled candidates are distinct with probability~$1$. For $\bsx\in\Xcal$ and $r\ge0$, we write
\[
B(\bsx,r):=\{\bsy \in\Xcal:d(\bsx,\bsy)\le r\}
\]
for the closed metric ball. For every nonempty set $A\subseteq\Xcal$, we also use the notation
\[
d(\bsx,A):=\inf_{\bsa\in A}d(\bsx,\bsa).
\]
For nonnegative quantities $a$ and $b$, we write $a\lesssim b$ if $a\le Cb$ for a constant $C$ independent of the candidate-pool size $M$ and the output size $N$; dependence on additional parameters is indicated by a subscript. The notation $a\asymp b$ means that both $a\lesssim b$ and $b\lesssim a$ hold.

\subsection{Metric-measure spaces and geometric regularity}
\label{subsec:regularity}

For $r>0$, define the lower and upper ball-mass functions by
\[
\underline{\mu}(r)
:=
\inf_{\bsx\in\Xcal}\mu(B(\bsx,r)),
\quad
\overline{\mu}(r)
:=
\sup_{\bsx\in\Xcal}\mu(B(\bsx,r)).
\]
Compactness and full support imply that $\underline{\mu}(r)>0$ for every
fixed $r>0$. We also denote by $\Ncal_{\Xcal}(r)$ the
covering number of $\Xcal$, namely the smallest integer $K$ for
which there exist $\bsz_1,\ldots,\bsz_K\in\Xcal$ satisfying
\[
\Xcal\subseteq\bigcup_{j=1}^K B(\bsz_j,r).
\]
Some of our estimates will first be stated in terms of
$\Ncal_{\Xcal}(r)$ and $\underline{\mu}(r)$ without any
polynomial-growth assumption. The sharp rates in $M$ and $N$ are then
obtained under the following regularity condition.

\begin{definition}[$s$-regular metric-measure space]
\label{def:s-regular}
Let $s>0$. We say that $(\Xcal,d,\mu)$ is \emph{$s$-regular} if there
exist constants $0<c_-\le c_+<\infty$ such that
\begin{equation}
c_-r^s
\le
\mu(B(\bsx,r))
\le
c_+r^s
\quad
\text{for all $\bsx\in\Xcal$ and $0<r\le D_{\Xcal}$.}
\label{eq:s-regularity}
\end{equation}
The exponent $s$ will be referred to as the intrinsic dimension of the
space.
\end{definition}

This condition is also called Ahlfors regularity; see, for instance, \cite[Chapter~8]{H01}. In terms of the ball-mass functions, it is equivalent to $c_-r^s \le \underline{\mu}(r) \le \overline{\mu}(r) \le c_+r^s$. The lower and upper bounds imply full support and non-atomicity, respectively. Moreover, an $s$-regular metric space has finite doubling dimension, with a doubling constant depending only on $c_-, c_+$, and $s$; see, for example, \cite{H01}. This fact will be useful when discussing fast FPS implementations.

\begin{remark}\label{rem:local-regularity}
It suffices to assume \eqref{eq:s-regularity} for $0<r\le r_0$ with fixed $r_0>0$; after changing the constants, the estimates extend to every $0<r\le D_{\Xcal}$. For $r_0<r\le D_{\Xcal}$, one may use
\[
\mu(B(\bsx,r))\le 1\le r_0^{-s}r^s\quad \text{and}\quad 
\mu(B(\bsx,r))
\ge c_-r_0^s
\ge c_-\left(\frac{r_0}{D_{\Xcal}}\right)^s r^s.
\]
Thus, the standing assumption is essentially a small-scale condition.
\end{remark}

\begin{example}[Euclidean domains]
Let $\Xcal\subset\RR^d$ be compact with positive Lebesgue measure, equipped with the Euclidean metric and normalized Lebesgue measure. Suppose that there exist $\kappa>0$ and $r_0>0$ such that
\[
    |\Xcal\cap B(\bsx,r)|
    \ge
    \kappa r^d
\]
for all $\bsx\in\Xcal$ and $0<r\le r_0$. Then $(\Xcal,\|\cdot\|_2,\mu)$ is $d$-regular. This condition is satisfied, for example, when $\Xcal$ is the closure of a bounded Lipschitz domain.
\end{example}

\begin{example}[Compact manifolds]
\label{ex:manifold}
Let $\Xcal$ be a compact connected $s$-dimensional Riemannian manifold without boundary, equipped with its geodesic distance and normalized Riemannian volume. Then $(\Xcal,d,\mu)$ is $s$-regular. Compact Riemannian manifolds are therefore included in the present framework; see also \cite{GL17,RS16}.
\end{example}

The regularity assumption gives the expected order $r^{-s}$ for the
covering number.

\begin{lemma}
\label{lem:covering-number}
Suppose that $(\Xcal,d,\mu)$ is $s$-regular. Then, for every
$0<r\le D_{\Xcal}$, we have
\[
\frac{1}{c_+r^s}
\le
\Ncal_{\Xcal}(r)
\le
\frac{2^s}{c_{-}r^s}.
\]
\end{lemma}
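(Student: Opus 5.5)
The lower bound follows from a union bound over a minimal covering, and the upper bound from a volume (packing) argument with a maximal separated set.

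\emph{Lower bound.} Let $K=\Ncal_{\Xcal}(r)$ and let $\bsz_1,\ldots,\bsz_K$ be centers with $\Xcal\subseteq\bigcup_{j}B(\bsz_j,r)$. Since $\mu$ is a probability measure supported on $\Xcal$, subadditivity gives
\[
1=\mu(\Xcal)\le\sum_{j=1}^K\mu(B(\bsz_j,r))\le K\,c_+r^s .
\]
The last step uses the upper half of \eqref{eq:s-regularity}, which is valid because $0<r\le D_{\Xcal}$. Rearranging yields $\Ncal_{\Xcal}(r)\ge 1/(c_+r^s)$.

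\emph{Upper bound.} Choose a maximal $r$-separated subset $\{\bsz_1,\ldots,\bsz_K\}\subseteq\Xcal$, meaning $d(\bsz_i,\bsz_j)>r$ for $i\ne j$. We first check that such a set is finite. The balls $B(\bsz_j,r/2)$ are pairwise disjoint, because a common point would force $d(\bsz_i,\bsz_j)\le r$. Each of these balls has mass at least $c_-(r/2)^s$, where the lower half of \eqref{eq:s-regularity} applies since $r/2\le D_{\Xcal}$. Hence every $r$-separated set has at most $2^s/(c_-r^s)$ elements, so a maximal one exists (for instance by greedy selection) and obeys
\[
K\,c_-(r/2)^s\le\sum_{j}\mu(B(\bsz_j,r/2))\le 1 .
\]

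Maximality then gives the covering property. Any $\bsx\in\Xcal$ satisfies $d(\bsx,\bsz_j)\le r$ for some $j$, since otherwise $\bsx$ could be added to the set. Therefore the balls $B(\bsz_j,r)$ cover $\Xcal$, and
\[
\Ncal_{\Xcal}(r)\le K\le \frac{2^s}{c_-r^s}.
\]

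No step is a real obstacle. The only points needing care are (i) that the radius $r/2$ lies in the range where \eqref{eq:s-regularity} holds, which is automatic, and (ii) using strict separation $>r$ so that the half-radius closed balls are genuinely disjoint.
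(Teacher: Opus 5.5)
Your proof is correct and follows the paper's argument essentially step for step: a union bound over a minimal $r$-cover for the lower bound, and a maximal $r$-separated set with pairwise disjoint balls of radius $r/2$ for the upper bound. Your extra check that such a maximal set exists and is finite is a small point the paper leaves implicit.
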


\begin{proof}
If $K$ balls of radius $r$ cover $\Xcal$, then we have
\[
1=\mu(\Xcal)
\le
\sum_{j=1}^K\mu(B(\bsz_j,r))
\le
Kc_+r^s,
\]
which proves the lower bound. For the upper bound, take an inclusion-wise maximal set $\{\bsz_1,\ldots,\bsz_K\}\subset\Xcal$ whose pairwise distances are greater than $r$. Maximality implies that the balls $B(\bsz_j,r)$ cover $\Xcal$, whereas the balls $B(\bsz_j,r/2)$ are pairwise disjoint. Hence
\[
1
\ge
\sum_{j=1}^K\mu(B(\bsz_j,r/2))
\ge
Kc_-\left(\frac r2\right)^s,
\]
and the upper bound follows.
\end{proof}

\subsection{Covering and separation radii, and mesh ratio}
\label{subsec:geometric-quantities}

Let $X_N=\{\bsx_1,\ldots,\bsx_N\}\subset\Xcal$ be a set of $N$ distinct
points. Its covering radius, also referred to as the \emph{fill distance}, in $\Xcal$ is
\begin{equation}
h_{\Xcal}(X_N)
:=
\sup_{\bsx\in\Xcal}d(\bsx,X_N).
\label{eq:fill-distance}
\end{equation}
More generally, for every nonempty $A\subseteq\Xcal$, we write
\[
h_A(X_N):=\sup_{\bsx\in A}d(\bsx,X_N).
\]
For $N\ge2$, the separation radius of $X_N$ is
\begin{equation}
q(X_N)
:=
\frac12\min_{1\le i< j\le N}d(\bsx_i,\bsx_j).
\label{eq:separation-radius}
\end{equation}
The mesh ratio is defined by
\begin{equation}
\rho_{\Xcal}(X_N)
:=
\frac{h_{\Xcal}(X_N)}{q(X_N)}.
\label{eq:mesh-ratio}
\end{equation}

\begin{definition}[Quasi-uniformity]
\label{def:quasi-uniformity}
A family $\{X_N\}_{N\in\mathcal{I}}$ of finite point sets, with $|X_N|=N$, is called \emph{quasi-uniform} in $\Xcal$ if
\[
\sup_{N\in\mathcal{I},\, N\ge2}
\rho_{\Xcal}(X_N)<\infty.
\]
A sequence $(\bsx_n)_{n\ge1}\subset\Xcal$ of distinct points is called quasi-uniform if its initial segments $X_N=\{\bsx_1,\ldots,\bsx_N\}$ form a quasi-uniform family.
\end{definition}

We shall compare the covering radius of a constructed point set with the optimal $N$-point covering radius
\begin{equation}
h_N^*(\Xcal)
:=
\inf_{\substack{Z\subset\Xcal,\, |Z|=N}}
h_{\Xcal}(Z).
\label{eq:optimal-fill-distance}
\end{equation}
The following elementary proposition records the natural geometric scale in an $s$-regular space.

\begin{proposition}
\label{prop:natural-scale}
Suppose that $(\Xcal,d,\mu)$ is $s$-regular. Then, for every $N\ge1$,
\begin{equation}
(c_+N)^{-1/s}
\le
h_N^*(\Xcal)
\le
2(c_-N)^{-1/s}.
\label{eq:optimal-fill-bounds}
\end{equation}
Moreover, every set $X_N\subset\Xcal$ of $N\ge2$ distinct points satisfies
\begin{equation}
q(X_N)
\le
(c_-N)^{-1/s}.
\label{eq:separation-upper-bound}
\end{equation}
\end{proposition}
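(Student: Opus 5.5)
We prove the three inequalities separately, each by a mass-counting argument based on \eqref{eq:s-regularity}, in the same spirit as \Cref{lem:covering-number}.

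\emph{Lower bound in \eqref{eq:optimal-fill-bounds}.} Fix any $Z\subset\Xcal$ with $|Z|=N$ and set $h:=h_{\Xcal}(Z)$. The closed balls $B(\bsz,h)$, $\bsz\in Z$, cover $\Xcal$. Since $d(\cdot,Z)$ is continuous on the compact set $\Xcal$, the supremum defining $h$ is attained, so closed balls suffice. If $h\ge D_{\Xcal}$, then $h\ge D_{\Xcal}\ge (c_+N)^{-1/s}$, because taking $r=D_{\Xcal}$ in \eqref{eq:s-regularity} gives $1\le c_+D_{\Xcal}^s$. Otherwise $h<D_{\Xcal}$ and we may apply the upper mass bound:
\[
1\le\sum_{\bsz\in Z}\mu(B(\bsz,h))\le Nc_+h^s .
\]
Hence $h\ge (c_+N)^{-1/s}$, and taking the infimum over $Z$ gives the claim.

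\emph{Separation bound \eqref{eq:separation-upper-bound}.} Let $q=q(X_N)$. The closed balls $B(\bsx_i,r)$ with $r<q$ are pairwise disjoint, because two centers are at distance at least $2q>2r$. We must check that $r$ lies in the admissible range. Since $N\ge2$, we have $2q\le D_{\Xcal}$, so $r<q\le D_{\Xcal}$. Then
\[
1\ge\sum_i\mu(B(\bsx_i,r))\ge Nc_-r^s .
\]
Letting $r\uparrow q$ yields $q\le (c_-N)^{-1/s}$. Using $r<q$ rather than $r=q$ avoids any issue with touching closed balls.

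\emph{Upper bound in \eqref{eq:optimal-fill-bounds}.} This is the main step. We need to build a set of exactly $N$ points, and \Cref{lem:covering-number} does not directly provide that cardinality. Put $r:=2(c_-N)^{-1/s}$.
\begin{itemize}
\item If $r>D_{\Xcal}$, take any $N$ distinct points; they exist because $\mu$ is non-atomic with full support, so $\Xcal$ is infinite. Any single point already covers $\Xcal$ within radius $D_{\Xcal}<r$.
\item If $r\le D_{\Xcal}$, take a maximal $r$-separated set as in the proof of \Cref{lem:covering-number}. It covers $\Xcal$ at radius $r$. Its balls of radius $r/2$ are disjoint, so it has $K\le 2^s/(c_-r^s)=N$ points.
\end{itemize}
In the second case, pad the $K$ points with $N-K$ arbitrary additional distinct points. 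Adding points can only decrease $h_{\Xcal}$, so $h_N^*(\Xcal)\le r$, as required.
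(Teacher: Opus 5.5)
Your proof is correct and follows essentially the paper's argument step for step. You use the covering mass count for the lower bound and the disjoint-balls count with $r\uparrow q$ for the separation bound. For the upper bound you take a maximal $r_N$-separated set and pad it with extra points; the paper instead simply cites \Cref{lem:covering-number} for $\Ncal_{\Xcal}(r_N)\le N$. Your extra radius-range checks are harmless, though the case $h\ge D_{\Xcal}$ needs no separate treatment: $h_{\Xcal}(Z)\le D_{\Xcal}$ always, and $r=D_{\Xcal}$ is admissible in \eqref{eq:s-regularity}.
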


\begin{proof}
Let $Z=\{\bsz_1,\ldots,\bsz_N\}\subset\Xcal$ and put $h=h_{\Xcal}(Z)$. The balls $B(\bsz_j,h)$ cover $\Xcal$, and hence
\[
1
\le
\sum_{j=1}^N\mu(B(\bsz_j,h))
\le
Nc_+h^s.
\]
Taking the infimum over $Z$ proves the lower bound on $h_N^*(\Xcal)$.

For the upper bound, put $r_N=2(c_-N)^{-1/s}$. If $r_N\le D_{\Xcal}$, \Cref{lem:covering-number} gives $\Ncal_{\Xcal}(r_N)\le N$, so there is a set of at most $N$ points with covering radius at most $r_N$. Additional distinct points can be inserted without increasing the covering radius. If $r_N>D_{\Xcal}$, the claimed estimate is immediate from $h_N^*(\Xcal)\le D_{\Xcal}<r_N$.

Finally, let $q=q(X_N)$. For every $0<r<q$, the balls $B(\bsx_1,r),\ldots,B(\bsx_N,r)$ are pairwise disjoint. Therefore
\[
1
\ge
\sum_{j=1}^N\mu(B(\bsx_j,r))
\ge
Nc_-r^s.
\]
Letting $r\uparrow q$ proves the upper bound on $q(X_N)$.
\end{proof}

The following equivalence follows immediately from \Cref{prop:natural-scale} and the definition of the mesh ratio, so we omit the proof.
\begin{corollary}
\label{cor:quasi-uniform-scales}
Under the assumptions of \Cref{prop:natural-scale}, a family
$\{X_N\}_{N\in\mathcal{I}}$ is quasi-uniform if and only if $h_{\Xcal}(X_N)\lesssim N^{-1/s}$ and $q(X_N)\gtrsim N^{-1/s}$ uniformly for $N\in\mathcal{I}$. In particular, quasi-uniformity forces
both quantities to have the optimal order $N^{-1/s}$.
\end{corollary}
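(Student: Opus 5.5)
The corollary follows by combining the two estimates of \Cref{prop:natural-scale} with the identity $h_{\Xcal}(X_N)=\rho_{\Xcal}(X_N)\,q(X_N)$. I would treat the two directions of the equivalence separately, taking $N\ge2$ throughout so that the separation radius and the mesh ratio are defined.

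For the ``if'' direction, I assume $h_{\Xcal}(X_N)\le C_1N^{-1/s}$ and $q(X_N)\ge C_2N^{-1/s}$ for all $N\in\mathcal{I}$ with $N\ge2$. Dividing the first bound by the second gives $\rho_{\Xcal}(X_N)\le C_1/C_2$ uniformly in $N$, which is quasi-uniformity.

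For the ``only if'' direction, I set $R:=\sup_{N}\rho_{\Xcal}(X_N)<\infty$ and derive each bound in turn.
\begin{itemize}
\item \textbf{Covering radius from above.} By \eqref{eq:separation-upper-bound}, $q(X_N)\le (c_-N)^{-1/s}$. Hence
\[
h_{\Xcal}(X_N)=\rho_{\Xcal}(X_N)\,q(X_N)\le R\,(c_-N)^{-1/s}.
\]
\item \textbf{Separation radius from below.} Since $X_N$ is an admissible $N$-point set, the lower bound in \eqref{eq:optimal-fill-bounds} gives $h_{\Xcal}(X_N)\ge h_N^*(\Xcal)\ge (c_+N)^{-1/s}$. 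Hence
\[
q(X_N)=\frac{h_{\Xcal}(X_N)}{\rho_{\Xcal}(X_N)}\ge R^{-1}(c_+N)^{-1/s}.
\]
\end{itemize}

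The ``in particular'' statement follows from \Cref{prop:natural-scale} as well. Its lower bound $h_{\Xcal}(X_N)\ge(c_+N)^{-1/s}$ and its upper bound $q(X_N)\le (c_-N)^{-1/s}$ hold for every $N$-point set. Combined with the two bounds just obtained, they pin both $h_{\Xcal}(X_N)$ and $q(X_N)$ between constant multiples of $N^{-1/s}$. The constants depend only on $c_\pm$, $s$ and $R$, which is the optimal order.

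The only point requiring care is that the lower bound on $h_{\Xcal}(X_N)$ is valid for arbitrary sets, which holds because it comes from the infimum $h_N^*(\Xcal)$. The $N=1$ case is excluded by \Cref{def:quasi-uniformity}, so no further difficulty arises.
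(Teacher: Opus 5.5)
Your proof is correct and is the argument the paper intends. The paper omits the proof, saying only that the corollary follows immediately from \Cref{prop:natural-scale} and the definition of the mesh ratio. That is exactly your combination of the universal bounds $h_{\Xcal}(X_N)\ge(c_+N)^{-1/s}$ and $q(X_N)\le(c_-N)^{-1/s}$ with the identity $h_{\Xcal}(X_N)=\rho_{\Xcal}(X_N)\,q(X_N)$, with $N\ge2$ throughout.
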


\subsection{Random candidate sets}
\label{subsec:random-candidates}

Let $M\ge N$ and let $Y_1,\ldots,Y_M\overset{\mathrm{iid}}{\sim}\mu$. We denote the resulting candidate set by
\begin{equation}
\Ycal_M:=\{Y_1,\ldots,Y_M\}
\label{eq:candidate-set}
\end{equation}
and its covering radius in the original domain, which we call the \emph{covering error}, is given by
\begin{equation}
\varepsilon_M
:=
h_{\Xcal}(\Ycal_M)
=
\sup_{\bsx\in\Xcal}d(\bsx,\Ycal_M).
\label{eq:candidate-covering-error}
\end{equation}
Since $\mu$ is non-atomic, the candidate points are pairwise distinct with probability~$1$. The random variable $\varepsilon_M$ is the covering radius of the iid candidate cloud. Its order and tail probabilities will be studied in \Cref{sec:exact}; see \cite{RS16} for general results on random covering radii in regular metric-measure spaces.

All probabilistic statements below refer to the randomness of the candidate points unless additional algorithmic randomness is explicitly introduced.

\subsection{Exact and approximate farthest-point sampling}
\label{subsec:fps}

Let $\Ycal_M\subset\Xcal$ be a deterministic set of $M$ distinct candidate points and let $1\le N\le M$. Starting from a point $\bsx_1\in\Ycal_M$, define
\[
X_n:=\{\bsx_1,\ldots,\bsx_n\},
\quad 1\le n\le N,
\]
where we choose $\bsx_n\in \Ycal_M\setminus X_{n-1}$ in a certain way. For each $n\le M$, the residual covering radius of the candidate set is
\begin{equation}
R_n
:=
h_{\Ycal_M}(X_n)
=
\max_{\bsy\in\Ycal_M}d(\bsy,X_n).
\end{equation}
In particular, since $X_M=\Ycal_M$, we set $R_M=0$. For $n<M$, exact FPS chooses
\begin{equation}
\bsx_{n+1}
\in
\operatorname*{arg\,max}_{\bsy\in\Ycal_M}d(\bsy,X_n),
\label{eq:exact-fps}
\end{equation}
so that $d(\bsx_{n+1},X_n)=R_n$.

To include inexact or fast search procedures, let $\alpha_1,\ldots,\alpha_{N-1}\in(0,1]$. We say that the construction is an \emph{$(\alpha_n)$-approximate farthest-point sampling} if
\begin{equation}
d(\bsx_{n+1},X_n)
\ge
\alpha_nR_n,
\quad n=1,\ldots,N-1.
\label{eq:approximate-fps}
\end{equation}
For a constant $\alpha\in(0,1]$, an $\alpha$-approximate FPS means that \eqref{eq:approximate-fps} holds with $\alpha_n\ge\alpha$ at every step. Exact FPS is recovered by setting $\alpha_n=1$ for all $n$. We shall use
\[
\underline{\alpha}_n:=\min_{1\le j\le n}\alpha_j
\]
for the smallest approximation factor used during the first $n$ steps. 

Because the candidate set is finite, an exact maximizer in \eqref{eq:exact-fps} always exists, and hence a point satisfying \eqref{eq:approximate-fps} exists for every $\alpha_n\le1$. We fix an arbitrary deterministic tie-breaking rule whenever the maximizer is not unique. The initial point may be chosen arbitrarily; all estimates developed below are uniform in this choice.

For every ordered set of distinct points, $2q(X_n)=\min_{1\le j<n}d(\bsx_{j+1},X_j)$. Moreover, $R_n$ is nonincreasing in $n$. It follows from \eqref{eq:approximate-fps} that
\begin{equation}
q(X_n)
\ge
\frac{\underline{\alpha}_{n-1}}{2}R_{n-1},
\quad n\ge2.
\label{eq:approximate-separation-bound}
\end{equation}
For exact FPS, $d(\bsx_{j+1},X_j)=R_j$ for every $j$, and hence
\begin{equation}
q(X_n)=\frac12R_{n-1}.
\label{eq:exact-separation-identity}
\end{equation}

The candidate covering radius $\varepsilon_M$ measures the difference between covering the finite cloud and covering the whole domain. For every $X_n\subseteq\Ycal_M$,
\begin{equation}
R_n
\le
h_{\Xcal}(X_n)
\le
R_n+\varepsilon_M.
\label{eq:discretization-inequality}
\end{equation}
Indeed, the first inequality follows from
$\Ycal_M\subseteq\Xcal$. For the second, given
$\bsx\in\Xcal$, choose $\bsy\in\Ycal_M$ such that
$d(\bsx,\bsy)\le\varepsilon_M$ and use
\[
d(\bsx,X_n)
\le
d(\bsx,\bsy)+d(\bsy,X_n)
\le
\varepsilon_M+R_n.
\]
Consequently, an $(\alpha_n)$-approximate candidate step satisfies
\begin{equation}
d(\bsx_{n+1},X_n)
\ge
\alpha_n\left(h_{\Xcal}(X_n)-\varepsilon_M\right).
\label{eq:relaxed-continuous-greedy}
\end{equation}
Thus, finite-candidate FPS may be viewed as a relaxed version of the continuous greedy-packing rule, with a relaxation caused by both the candidate discretization error and the inexact farthest-point search. This observation is closely related to the deterministic finite-candidate and relaxed-greedy framework studied in \cite[Section~3.3]{PZ23}.

\section{Exact farthest-point sampling from random candidates}\label{sec:exact}

Let $2\le N\le M$, and let $\Ycal_M=\{Y_1,\ldots,Y_M\}$ be the iid candidate set introduced in \Cref{subsec:random-candidates}. Starting from an arbitrary $\bsx_1\in\Ycal_M$, we apply exact FPS and obtain the nested point sets
\[
X_n=\{\bsx_1,\ldots,\bsx_n\},
\quad 1\le n\le N.
\]
Recall that $R_n=h_{\Ycal_M}(X_n)$ denotes the residual covering radius of the candidate set and $\varepsilon_M=h_{\Xcal}(\Ycal_M)$ denotes the covering error incurred by replacing the continuous domain $\Xcal$ with the finite candidate set.

The analysis consists of two independent ingredients. First, we establish deterministic bounds that hold for every candidate set. Second, we control $\varepsilon_M$ when the candidates are sampled independently from the reference measure $\mu$. Combining the two ingredients gives non-asymptotic probabilistic bounds for the mesh ratio. We then prove that the resulting candidate-pool order $N\log N$ cannot be improved within the iid candidate model, even if FPS is replaced by an arbitrary subset-selection procedure.

\subsection{Deterministic candidate-set estimates}
\label{subsec:deterministic-exact}

We begin with a deterministic result that isolates the effect of the candidate covering error.

\begin{proposition}[Deterministic FPS bounds]
\label{prop:deterministic-exact-fps}
Let $\Ycal_M\subset\Xcal$ be a set of $M$ distinct candidate points, let $\varepsilon_M=h_{\Xcal}(\Ycal_M)$, and let $X_1,\ldots,X_M$ be generated by exact FPS on
$\Ycal_M$. Then, for every $2\le n\le M$,
\begin{equation}
\rho_{\Xcal}(X_n)
\le
2\left(1+\frac{\varepsilon_M}{R_{n-1}}\right).
\label{eq:deterministic-mesh-computable}
\end{equation}
In particular, if $\varepsilon_M<h_{n-1}^*(\Xcal)$, then
\begin{equation}
\rho_{\Xcal}(X_n)
\le
\frac{2}{1-\varepsilon_M/h_{n-1}^*(\Xcal)}.
\label{eq:deterministic-mesh-bound}
\end{equation}
Consequently, if $2\le N\le M$ and $\varepsilon_M<h_{N-1}^*(\Xcal)$, then
\begin{equation}
\max_{2\le n\le N}\rho_{\Xcal}(X_n)
\le
\frac{2}{1-\varepsilon_M/h_{N-1}^*(\Xcal)}.
\label{eq:deterministic-simultaneous-bound}
\end{equation}
\end{proposition}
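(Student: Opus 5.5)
The plan is to combine the exact separation identity \eqref{eq:exact-separation-identity} with the discretization inequality \eqref{eq:discretization-inequality}, and then to compare $R_{n-1}$ with the optimal covering radius. For $2\le n\le M$, exact FPS gives $q(X_n)=\tfrac12 R_{n-1}$. We also have $h_{\Xcal}(X_n)\le R_n+\varepsilon_M$, and since $R_n$ is nonincreasing, $R_n\le R_{n-1}$. Note that $R_{n-1}>0$ for $n\le M$, because $X_{n-1}$ is a proper subset of the distinct candidate set. Dividing then gives
\[
\rho_{\Xcal}(X_n)\le \frac{R_{n-1}+\varepsilon_M}{R_{n-1}/2}=2\Bigl(1+\frac{\varepsilon_M}{R_{n-1}}\Bigr),
\]
which is \eqref{eq:deterministic-mesh-computable}.

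For \eqref{eq:deterministic-mesh-bound}, the key step is a lower bound on $R_{n-1}$ in terms of $h_{n-1}^*(\Xcal)$. Apply the right inequality of \eqref{eq:discretization-inequality} to $X_{n-1}$. This is a set of $n-1$ distinct points of $\Xcal$, so
\[
h_{n-1}^*(\Xcal)\le h_{\Xcal}(X_{n-1})\le R_{n-1}+\varepsilon_M,
\]
and hence $R_{n-1}\ge h_{n-1}^*(\Xcal)-\varepsilon_M>0$ under the hypothesis. Writing $t=\varepsilon_M/h_{n-1}^*(\Xcal)<1$, we obtain $\varepsilon_M/R_{n-1}\le t/(1-t)$. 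Therefore $2(1+\varepsilon_M/R_{n-1})\le 2/(1-t)$, which is the claim.

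For the simultaneous bound \eqref{eq:deterministic-simultaneous-bound}, the plan is to use the monotonicity of $h_k^*(\Xcal)$ in $k$. More points can only decrease the optimal covering radius, since one can add distinct points to an optimal configuration without increasing the covering radius; a slight care with the infimum is needed, but adding a point to any near-optimal set works. Hence for $2\le n\le N$ we have $h_{n-1}^*(\Xcal)\ge h_{N-1}^*(\Xcal)>\varepsilon_M$. So \eqref{eq:deterministic-mesh-bound} applies for each such $n$, and its right-hand side is bounded by the one with $N-1$.

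The only mildly delicate point is this monotonicity of $h_k^*$. It requires that $\Xcal$ contain enough distinct points to add one, which holds because $\Ycal_M\subset\Xcal$ has $M\ge N$ distinct points. Everything else is direct.
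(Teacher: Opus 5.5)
Your proposal is correct and follows essentially the same route as the paper: the separation identity $q(X_n)=\tfrac12R_{n-1}$ with the discretization inequality and $R_n\le R_{n-1}$ for the first bound, then $R_{n-1}\ge h_{n-1}^*(\Xcal)-\varepsilon_M$ for the second, and monotonicity of $h_k^*(\Xcal)$ in $k$ for the simultaneous bound. Your remarks that $R_{n-1}>0$ and that $h_k^*(\Xcal)$ is nonincreasing (by adding distinct points to near-optimal sets) fill in details the paper leaves implicit.
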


\begin{proof}
By \eqref{eq:discretization-inequality}, the monotonicity of $R_n$,
and \eqref{eq:exact-separation-identity}, we have
\[
    \rho_{\Xcal}(X_n)
    =
    \frac{h_{\Xcal}(X_n)}{q(X_n)}
    \le
    \frac{R_n+\varepsilon_M}{R_{n-1}/2}
    \le
    2\left(1+\frac{\varepsilon_M}{R_{n-1}}\right),
\]
which proves \eqref{eq:deterministic-mesh-computable}. Moreover,
\[
    R_{n-1}
    \ge
    h_{\Xcal}(X_{n-1})-\varepsilon_M
    \ge
    h_{n-1}^*(\Xcal)-\varepsilon_M.
\]
Hence, if $\varepsilon_M<h_{n-1}^*(\Xcal)$,
\[
    1+\frac{\varepsilon_M}{R_{n-1}}
    \le
    \frac{1}{
        1-\varepsilon_M/h_{n-1}^*(\Xcal)
    },
\]
which proves \eqref{eq:deterministic-mesh-bound}. Finally, $h_{n-1}^*(\Xcal)\ge h_{N-1}^*(\Xcal)$ for $2\le n\le N$, and the simultaneous estimate follows.
\end{proof}
\begin{remark}
\label{rem:relation-pz-finite}
The discretization inequality \eqref{eq:discretization-inequality} and the resulting finite-candidate mesh-ratio estimate are closely related to \cite[Lemma~3.8 and Theorem~3.9]{PZ23}. The formulation in \Cref{prop:deterministic-exact-fps} is convenient for the present probabilistic analysis because the random quantity $\varepsilon_M$ is separated from the deterministic optimal scale $h_{n-1}^*(\Xcal)$. The proof uses only the metric structure and does not require $\Xcal$ to be a subset of Euclidean space.
\end{remark}

\begin{remark}
\label{rem:two-deterministic-errors}
The bound shown in \eqref{eq:deterministic-mesh-computable} has a direct geometric interpretation. The factor $2$ is the mesh-ratio bound for continuous-domain greedy packing, whereas the additional term $2\varepsilon_M/R_{n-1}$ measures the error caused by discretizing $\Xcal$ by the finite candidate set. Thus the candidate covering error must be small relative to the insertion scale $R_{n-1}$.
\end{remark}

\subsection{Non-asymptotic probabilistic bounds}
\label{subsec:probabilistic-upper}

We now control the candidate covering error. We first state a bound that is valid on an arbitrary compact metric-measure space.

\begin{lemma}[Random covering bound]
\label{lem:random-covering-upper}
Let $Y_1,\ldots,Y_M\overset{\mathrm{iid}}{\sim}\mu$ and let
$\varepsilon_M=h_{\Xcal}(\Ycal_M)$. Then, for every $0<r\le D_{\Xcal}$,
\begin{equation}
\PP(\varepsilon_M>r)
\le
\Ncal_{\Xcal}(r/2)
\exp\left(
-M\underline{\mu}(r/2)
\right).
\label{eq:general-covering-tail}
\end{equation}
\end{lemma}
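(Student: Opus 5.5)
The plan is a net argument followed by a union bound. Put $K=\Ncal_{\Xcal}(r/2)$, which is finite by compactness, and fix centers $\bsz_1,\ldots,\bsz_K\in\Xcal$ with $\Xcal\subseteq\bigcup_{j=1}^K B(\bsz_j,r/2)$. The first step is a deterministic inclusion of events:
\[
\{\varepsilon_M>r\}\subseteq\bigcup_{j=1}^K\{\Ycal_M\cap B(\bsz_j,r/2)=\emptyset\}.
\]
Suppose every ball $B(\bsz_j,r/2)$ contains a candidate. Take any $\bsx\in\Xcal$ and pick $j$ with $d(\bsx,\bsz_j)\le r/2$. There is a candidate $Y_i$ with $d(Y_i,\bsz_j)\le r/2$, and the triangle inequality gives $d(\bsx,Y_i)\le r$. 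Hence $d(\bsx,\Ycal_M)\le r$ for every $\bsx$, so $\varepsilon_M\le r$. Because the balls are closed, the non-strict inequality is exactly what we need, and no boundary issue arises.

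The second step bounds each empty-ball probability. By independence,
\[
\PP\bigl(\Ycal_M\cap B(\bsz_j,r/2)=\emptyset\bigr)=\bigl(1-\mu(B(\bsz_j,r/2))\bigr)^M\le\bigl(1-\underline{\mu}(r/2)\bigr)^M.
\]
Then $1-t\le e^{-t}$ gives the bound $\exp(-M\underline{\mu}(r/2))$.

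The third step is the union bound over the $K$ balls, which yields \eqref{eq:general-covering-tail}. A minor measurability point arises because $\varepsilon_M$ is a supremum over an uncountable set. To handle it, I would observe that $\bsx\mapsto d(\bsx,\Ycal_M)$ is $1$-Lipschitz, so the supremum can be taken over a countable dense subset of the compact space $\Xcal$. Alternatively, it suffices that the inclusion above places $\{\varepsilon_M>r\}$ inside a measurable event. I do not expect any substantive obstacle beyond this bookkeeping. The only design choice is using radius $r/2$ for the net, so that two half-radius hops give total distance $r$.
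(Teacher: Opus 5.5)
Your proposal is correct and follows essentially the same argument as the paper: an $r/2$-cover, the triangle inequality to show that $\varepsilon_M>r$ forces some ball $B(\bsz_j,r/2)$ to be empty, and then independence, $1-t\le e^{-t}$, and a union bound over the $K=\Ncal_{\Xcal}(r/2)$ balls. Your measurability remark, which reduces the supremum to a countable dense set via the $1$-Lipschitz property, is a harmless addition that the paper leaves implicit.
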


\begin{proof}
Let $\bsz_1,\ldots,\bsz_K$ be the centers of an $r/2$-cover of $\Xcal$, where $K=\Ncal_{\Xcal}(r/2)$. Suppose that every ball $B(\bsz_j,r/2)$ contains at least one candidate point. For any $\bsx\in\Xcal$, choose $\bsz_j$ such that $d(\bsx,\bsz_j)\le r/2$, and then choose $Y_i\in B(\bsz_j,r/2)$. The triangle inequality gives $d(\bsx,Y_i)\le d(\bsx,\bsz_j)+d(\bsz_j,Y_i)\le r$. It follows that $\varepsilon_M\le r$.

Consequently, if $\varepsilon_M>r$, then at least one ball $B(\bsz_j,r/2)$ contains no candidate. By the union bound, we obtain
\begin{align*}
\PP(\varepsilon_M>r)
& \le
\sum_{j=1}^K
\PP\left(
\Ycal_M\cap B(\bsz_j,r/2)=\emptyset
\right)
=
\sum_{j=1}^K
\left(1-\mu(B(\bsz_j,r/2))\right)^M
\\
& \le
K\exp\left(
-M\underline{\mu}(r/2)
\right),
\end{align*}
which is \eqref{eq:general-covering-tail}.
\end{proof}

Combining this lemma with \Cref{prop:deterministic-exact-fps} immediately gives a general probabilistic mesh-ratio estimate.

\begin{theorem}[General probabilistic mesh-ratio bound]
\label{thm:general-probabilistic-upper}
Let $2\le N\le M$, and let $X_1,\ldots,X_N$ be generated by exact FPS from the iid candidate set $\Ycal_M$. Then, for every $0<r<h_{N-1}^*(\Xcal)$,
\begin{equation}
\PP\left(
\max_{2\le n\le N}\rho_{\Xcal}(X_n)
>
\frac{2}{1-r/h_{N-1}^*(\Xcal)}
\right)
\le \Ncal_{\Xcal}(r/2)\exp\left(-M\underline{\mu}(r/2)\right).
\label{eq:general-probabilistic-mesh-bound}
\end{equation}
\end{theorem}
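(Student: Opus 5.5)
The plan is to combine the deterministic estimate of \Cref{prop:deterministic-exact-fps} with the tail bound of \Cref{lem:random-covering-upper} through an event inclusion. Fix $0<r<h_{N-1}^*(\Xcal)$. We will show that, up to a null set, the event
\[
\left\{\max_{2\le n\le N}\rho_{\Xcal}(X_n)>\frac{2}{1-r/h_{N-1}^*(\Xcal)}\right\}
\]
is contained in $\{\varepsilon_M>r\}$. It is cleanest to argue by contraposition: assume $\varepsilon_M\le r$ and derive the mesh-ratio bound.

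On the event $\{\varepsilon_M\le r\}$ we have $\varepsilon_M\le r<h_{N-1}^*(\Xcal)$. The non-atomicity of $\mu$ ensures that the candidates are almost surely distinct, so $\Ycal_M$ is a set of $M$ distinct points and \Cref{prop:deterministic-exact-fps} applies. Then \eqref{eq:deterministic-simultaneous-bound} gives
\[
\max_{2\le n\le N}\rho_{\Xcal}(X_n)\le \frac{2}{1-\varepsilon_M/h_{N-1}^*(\Xcal)}\le \frac{2}{1-r/h_{N-1}^*(\Xcal)}.
\]
The second inequality uses that $t\mapsto 2/(1-t/h)$ is increasing on $[0,h)$. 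This establishes the inclusion.

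It remains to take probabilities. We have $\PP(\max_{2\le n\le N}\rho_{\Xcal}(X_n)>\cdots)\le\PP(\varepsilon_M>r)$, and \Cref{lem:random-covering-upper} bounds the right-hand side by $\Ncal_{\Xcal}(r/2)\exp(-M\underline{\mu}(r/2))$. That lemma requires $r\le D_{\Xcal}$, which holds because $r<h_{N-1}^*(\Xcal)\le D_{\Xcal}$: any single point has covering radius at most the diameter.

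The only step needing care is the bookkeeping. Distinctness holds almost surely, so the null set of coincident candidates does not affect the probability. Measurability of $\varepsilon_M$ and of the mesh ratio should also be checked; the events can be handled via outer probability if needed. Both points are routine, so I expect no substantive obstacle.
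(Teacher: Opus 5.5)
Your proposal is correct and is essentially the paper's own proof: on $\{\varepsilon_M\le r\}$ you apply the simultaneous deterministic bound \eqref{eq:deterministic-simultaneous-bound}, then bound $\PP(\varepsilon_M>r)$ by \Cref{lem:random-covering-upper}. Your extra checks, namely $r<h_{N-1}^*(\Xcal)\le D_{\Xcal}$ (needed to invoke the lemma) and the almost-sure distinctness of the candidates, are valid details that the paper leaves implicit.
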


\begin{proof}
On the event $\{\varepsilon_M\le r\}$, \eqref{eq:deterministic-simultaneous-bound} gives
\[
\max_{2\le n\le N}\rho_{\Xcal}(X_n)
\le
\frac{2}{
1-\varepsilon_M/h_{N-1}^*(\Xcal)
}
\le
\frac{2}{
1-r/h_{N-1}^*(\Xcal)
}.
\]
The result therefore follows from \Cref{lem:random-covering-upper}.
\end{proof}

We next specialize the result to an $s$-regular metric-measure space and
make all quantities explicit. For $0<\delta<1$, define
\begin{equation}
L_{M,\delta}
:=
\max\left\{
1,\log\left(\frac{2^sM}{\delta}\right)
\right\}\quad \text{and}\quad 
r_{M,\delta}
:=
2\left(
\frac{L_{M,\delta}}{c_-M}
\right)^{1/s}.
\end{equation}

\begin{theorem}[Non-asymptotic bound on an $s$-regular space]
\label{thm:s-regular-upper}
Suppose that $(\Xcal,d,\mu)$ is $s$-regular. Then
\begin{equation}
\PP\left(
\varepsilon_M\le r_{M,\delta}
\right)
\ge 1-\delta.
\label{eq:candidate-covering-high-probability}
\end{equation}
Furthermore, define
\begin{equation}
\tau_{N,M,\delta}
:=
2\left(
\frac{
c_+(N-1)L_{M,\delta}
}{
c_-M
}
\right)^{1/s}.
\label{eq:def-tau}
\end{equation}
If $\tau_{N,M,\delta}<1$, then the exact FPS construction satisfies
\begin{equation}
\PP\left[
\max_{2\le n\le N}\rho_{\Xcal}(X_n)
\le
\frac{2}{1-\tau_{N,M,\delta}}
\right]
\ge
1-\delta.
\label{eq:main-upper-bound}
\end{equation}
\end{theorem}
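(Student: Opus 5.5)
The proof combines \Cref{lem:random-covering-upper} with the explicit regularity bounds, and then feeds the result into \Cref{prop:deterministic-exact-fps} via \Cref{prop:natural-scale}.

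\emph{Step 1: the covering bound \eqref{eq:candidate-covering-high-probability}.} Put $r=r_{M,\delta}$. If $r>D_{\Xcal}$, then $\varepsilon_M\le D_{\Xcal}<r$ deterministically, so the claim is trivial. Otherwise $0<r\le D_{\Xcal}$, so $r/2\le D_{\Xcal}$, and \Cref{lem:random-covering-upper} applies. \Cref{lem:covering-number} gives
\[
\Ncal_{\Xcal}(r/2)\le \frac{2^s}{c_-(r/2)^s}=\frac{2^sM}{L_{M,\delta}}\le 2^sM,
\]
using $L_{M,\delta}\ge1$. $s$-regularity gives
\[
\underline{\mu}(r/2)\ge c_-(r/2)^s=\frac{L_{M,\delta}}{M}.
\]
Hence
\[
\PP(\varepsilon_M>r)\le 2^sM e^{-L_{M,\delta}}\le 2^sM\cdot\frac{\delta}{2^sM}=\delta,
\]
since $L_{M,\delta}\ge\log(2^sM/\delta)$. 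Note that $\delta<1$ makes this logarithm positive, so the maximum in $L_{M,\delta}$ causes no difficulty.

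\emph{Step 2: the mesh-ratio bound \eqref{eq:main-upper-bound}.} We work on the event $\{\varepsilon_M\le r_{M,\delta}\}$, which has probability at least $1-\delta$ by Step 1. The lower bound in \eqref{eq:optimal-fill-bounds} with $N-1\ge1$ gives $h_{N-1}^*(\Xcal)\ge (c_+(N-1))^{-1/s}$. Therefore
\[
\frac{\varepsilon_M}{h_{N-1}^*(\Xcal)}\le r_{M,\delta}\,(c_+(N-1))^{1/s}=2\left(\frac{c_+(N-1)L_{M,\delta}}{c_-M}\right)^{1/s}=\tau_{N,M,\delta}<1.
\]
Thus $\varepsilon_M<h_{N-1}^*(\Xcal)$, and \eqref{eq:deterministic-simultaneous-bound} yields
\[
\max_{2\le n\le N}\rho_{\Xcal}(X_n)\le \frac{2}{1-\varepsilon_M/h_{N-1}^*(\Xcal)}\le\frac{2}{1-\tau_{N,M,\delta}}.
\]
This holds for every starting point, and \Cref{prop:deterministic-exact-fps} is deterministic, so no further probabilistic argument is needed.

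\emph{Main obstacle.} The only delicate point is the case analysis in Step 1. \Cref{lem:random-covering-upper} and \Cref{lem:covering-number} require $r\le D_{\Xcal}$ and $r/2\le D_{\Xcal}$, respectively. The regime $r_{M,\delta}>D_{\Xcal}$ must therefore be handled separately, via the trivial bound $\varepsilon_M\le D_{\Xcal}$. The remaining work is bookkeeping of the constants so that the factor $2^s$ in the covering number cancels exactly against the $2^s$ inside $L_{M,\delta}$.
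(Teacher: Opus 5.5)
Your proof is correct and follows essentially the same route as the paper. You apply the union-bound covering tail at $r=r_{M,\delta}$, where the $2^s$ factors cancel against $L_{M,\delta}$ (after dispatching the trivial large-radius case). You then use $h_{N-1}^*(\Xcal)\ge (c_+(N-1))^{-1/s}$ to get $\varepsilon_M/h_{N-1}^*(\Xcal)\le\tau_{N,M,\delta}$ and invoke the deterministic simultaneous bound \eqref{eq:deterministic-simultaneous-bound} on that high-probability event.
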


\begin{proof}
If $r_{M,\delta}\ge D_{\Xcal}$, then $\varepsilon_M\le D_{\mathcal{X}}\le r_{M,\delta}$ deterministically, and hence \eqref{eq:candidate-covering-high-probability} is immediate. We may therefore assume that $r_{M,\delta}<D_{\Xcal}$. By \Cref{lem:covering-number}, we have
\[
\Ncal_{\Xcal}(r/2)
\le
\frac{4^s}{c_-r^s},
\]
while $s$-regularity gives
\[
\underline{\mu}(r/2)
\ge
c_-\left(\frac r2\right)^s.
\]
Consequently, \eqref{eq:general-covering-tail} implies
\begin{equation}
\PP(\varepsilon_M>r)
\le
\frac{4^s}{c_-r^s}
\exp\left(
-\frac{c_-Mr^s}{2^s}
\right).
\label{eq:s-regular-covering-tail}
\end{equation}
Substituting $r=r_{M,\delta}$ gives
\[
\frac{c_-Mr_{M,\delta}^s}{2^s}
=
L_{M,\delta} \quad \text{and}\quad 
\frac{4^s}{c_-r_{M,\delta}^s}
=
\frac{2^sM}{L_{M,\delta}}.
\]
Therefore,
\[
\PP(\varepsilon_M>r_{M,\delta})
\le
\frac{2^sM}{L_{M,\delta}}
\exp(-L_{M,\delta})
\le
\delta,
\]
which proves \eqref{eq:candidate-covering-high-probability}.

By \eqref{eq:optimal-fill-bounds}, we have $h_{N-1}^*(\Xcal) \ge \left(c_+(N-1)\right)^{-1/s}$. It follows that
\[
\frac{r_{M,\delta}}{h_{N-1}^*(\Xcal)}
\le
2\left(
\frac{
c_+(N-1)L_{M,\delta}
}{
c_-M
}
\right)^{1/s}
=
\tau_{N,M,\delta}.
\]
On the event $\{\varepsilon_M\le r_{M,\delta}\}$, \eqref{eq:deterministic-simultaneous-bound} therefore yields
\[
\max_{2\le n\le N}\rho_{\Xcal}(X_n)
\le
\frac{2}{1-\tau_{N,M,\delta}}.
\]
The probability of this event is at least $1-\delta$.
\end{proof}

\begin{remark}
\label{rem:covering-radius-order}
The upper bound $\varepsilon_M\lesssim\left(M^{-1}\log(M/\delta)\right)^{1/s}$ has the standard iid random-covering order. More precise asymptotic constants are known for spheres, cubes, balls, smooth manifolds, and other specific domains; see \cite{J87,RS16}. The present elementary estimate is sufficient for identifying the sharp candidate-pool order.
\end{remark}

Theorem~\ref{thm:s-regular-upper} can be restated in terms of a prescribed mesh-ratio bound.

\begin{corollary}[Candidate size for a prescribed mesh ratio]
\label{cor:target-mesh-ratio}
Suppose that $(\Xcal,d,\mu)$ is $s$-regular. Let $B>2$ and $0<\delta<1$. If
\begin{equation}
M
\ge
\frac{
2^sc_+
}{
c_-\left(1-2/B\right)^s
}
(N-1)L_{M,\delta},
\label{eq:sufficient-candidate-size}
\end{equation}
then
\begin{equation}
\PP\left[
\max_{2\le n\le N}\rho_{\Xcal}(X_n)
\le B
\right]
\ge
1-\delta.
\label{eq:target-mesh-probability}
\end{equation}
In particular, for every fixed $B>2$ and $\gamma>0$, there exists a
constant $C>0$, depending only on $B,\gamma,s,c_-$, and $c_+$, such that $M=\left\lceil CN\log N\right\rceil$ implies
\[
\PP\left[
\max_{2\le n\le N}\rho_{\Xcal}(X_n)
\le B
\right]
\ge
1-N^{-\gamma}
\]
for all sufficiently large $N$.
\end{corollary}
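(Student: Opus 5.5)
The plan is to derive the corollary directly from \Cref{thm:s-regular-upper}. The first step is to observe that the condition $\tau_{N,M,\delta}\le 1-2/B$ is exactly equivalent to
\[
\left(\frac{c_+(N-1)L_{M,\delta}}{c_-M}\right)^{1/s}\le \frac{1-2/B}{2},
\]
that is, to $M\ge 2^sc_+(N-1)L_{M,\delta}/\bigl(c_-(1-2/B)^s\bigr)$. This is precisely \eqref{eq:sufficient-candidate-size}. Since $B>2$, we have $1-2/B\in(0,1)$, so $\tau_{N,M,\delta}<1$. In addition, $2/(1-\tau_{N,M,\delta})\le 2/(2/B)=B$. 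Then \eqref{eq:main-upper-bound} gives \eqref{eq:target-mesh-probability} immediately. The fact that $L_{M,\delta}$ depends on $M$ causes no trouble: the hypothesis is a condition on $M$ itself, and no solving for $M$ is needed.

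For the second part, take $\delta=N^{-\gamma}$ and $M=\lceil CN\log N\rceil$. We need to check \eqref{eq:sufficient-candidate-size} for large $N$. Write $A:=2^sc_+/\bigl(c_-(1-2/B)^s\bigr)$. For large $N$,
\[
L_{M,\delta}=\log(2^sM)+\gamma\log N.
\]
Here $M\le CN\log N+1\le N^2$ once $N$ is large enough (depending on $C$), so $L_{M,\delta}\le s\log 2+(2+\gamma)\log N\le (3+\gamma)\log N$ for large $N$. It therefore suffices that
\[
CN\log N\ge A(N-1)(3+\gamma)\log N,
\]
and this holds once $C\ge A(3+\gamma)$. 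The maximum in $L_{M,\delta}$ equals the logarithm as soon as $2^sM/\delta\ge e$, which is automatic for large $N$.

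The only delicate point is the order of quantifiers. $C$ has to be fixed first, depending only on $B,\gamma,s,c_\pm$; only afterwards can the threshold on $N$ (for instance $M\le N^2$) depend on $C$. The choice $C=A(3+\gamma)$ involves no circularity, because the bound $L_{M,\delta}\le(3+\gamma)\log N$ uses only the crude estimate $\log M\le 2\log N$, which holds for any fixed $C$ once $N$ is large. Everything else is routine algebra.
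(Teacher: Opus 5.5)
Your proof is correct and follows the paper's argument: you rewrite the hypothesis as $\tau_{N,M,\delta}\le 1-2/B$, apply \Cref{thm:s-regular-upper}, and then, for $\delta=N^{-\gamma}$ and $M=\lceil CN\log N\rceil$, bound $L_{M,\delta}$ by a constant multiple of $\log N$. The only difference is that you use the crude estimate $\log M\le 2\log N$, which gives the explicit choice $C=A(3+\gamma)$. The paper instead uses the sharper asymptotics $L_{M,\delta}=(1+\gamma+o(1))\log N$, under which any $C>A(1+\gamma)$ works; both suffice for the existence claim.
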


\begin{proof}
The condition \eqref{eq:sufficient-candidate-size} implies
$\tau_{N,M,\delta} \le 1-2/B$. Therefore, $2/(1-\tau_{N,M,\delta})\le B$ and \eqref{eq:target-mesh-probability} follows from
\Cref{thm:s-regular-upper}.

For $\delta=N^{-\gamma}$ and $M=\lceil CN\log N\rceil$, we have
\[
L_{M,\delta}
=
(1+\gamma+o(1))\log N.
\]
Thus, \eqref{eq:sufficient-candidate-size} holds for all sufficiently large $N$ if $C$ is chosen sufficiently large.
\end{proof}

If the candidate pool grows faster than the critical order, the mesh-ratio upper bound approaches the optimal continuous-greedy constant.

\begin{corollary}[Supercritical candidate pools]
\label{cor:supercritical-upper}
Suppose that $(\Xcal,d,\mu)$ is $s$-regular and that
$M=M_N\ge N$ satisfies
\begin{equation}
\frac{M_N}{N\log M_N}\to \infty
\quad\text{as $N\to\infty$.}
\label{eq:supercritical-condition}
\end{equation}
Then, for every $\eta>0$,
\begin{equation}
\PP\left[
\max_{2\le n\le N}\rho_{\Xcal}(X_n)
\le 2+\eta
\right]
\to 1.
\label{eq:mesh-upper-converges-two}
\end{equation}
\end{corollary}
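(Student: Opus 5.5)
The plan is to apply \Cref{thm:s-regular-upper} with a confidence level $\delta=\delta_N$ that tends to $0$ slowly enough that $\tau_{N,M_N,\delta_N}\to0$. Once $\tau_{N,M_N,\delta_N}\le 1-2/(2+\eta)$, i.e.\ $2/(1-\tau_{N,M_N,\delta_N})\le 2+\eta$, the event in \eqref{eq:main-upper-bound} is contained in the event of \eqref{eq:mesh-upper-converges-two}. Its probability is therefore at least $1-\delta_N\to1$.

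The concrete choice will be $\delta_N:=1/\log M_N$. Any $\delta_N\to0$ with $\log(1/\delta_N)=O(\log M_N)$ would work, and this one is convenient. Condition \eqref{eq:supercritical-condition} forces $M_N\to\infty$, so $\delta_N\to 0$ and $\delta_N<1$ for large $N$. We then bound
\[
L_{M_N,\delta_N}\le \max\{1,\ s\log 2+\log M_N+\log\log M_N\}\le 3\log M_N
\]
for all sufficiently large $N$. Substituting into \eqref{eq:def-tau} gives
\[
\tau_{N,M_N,\delta_N}\le 2\left(\frac{3c_+}{c_-}\cdot\frac{N\log M_N}{M_N}\right)^{1/s},
\]
and the right-hand side tends to $0$ by \eqref{eq:supercritical-condition}.

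It remains to pass to the limit. Fix $\eta>0$ and set $\tau^\ast:=1-2/(2+\eta)\in(0,1)$. For all large $N$ we have $\tau_{N,M_N,\delta_N}\le\tau^\ast<1$, so \Cref{thm:s-regular-upper} applies and gives
\[
\PP\left[\max_{2\le n\le N}\rho_{\Xcal}(X_n)\le 2+\eta\right]\ge 1-\frac{1}{\log M_N}\to1 .
\]

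None of these steps is a real obstacle. The only point needing care is choosing $\delta_N$ so that it vanishes while $\log(1/\delta_N)$ stays $O(\log M_N)$; otherwise $L_{M_N,\delta_N}$ could outgrow the gain in \eqref{eq:supercritical-condition}. The choice $\delta_N=1/\log M_N$ handles this. A fixed $\delta$ followed by letting $\delta\downarrow0$ would also work, via a $\limsup$ argument.
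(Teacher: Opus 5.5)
Your proposal is correct and is essentially the paper's own argument: apply \Cref{thm:s-regular-upper} with a vanishing confidence level $\delta_N$ for which $L_{M_N,\delta_N}=O(\log M_N)$, so that $\tau_{N,M_N,\delta_N}\to0$. The only difference is that the paper takes $\delta_N=M_N^{-1}$ rather than your $1/\log M_N$, which changes nothing in the argument.
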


\begin{proof}
Take $\delta_N=M_N^{-1}$. Then $L_{M_N,\delta_N} = O(\log M_N)$, and hence
\[
\tau_{N,M_N,\delta_N}
=
O\left[
\left(
\frac{N\log M_N}{M_N}
\right)^{1/s}
\right]
\to 0.
\]
Since $\delta_N\to0$, the result follows from \eqref{eq:main-upper-bound}.
\end{proof}

\begin{remark}
\label{rem:upper-bound-not-convergence}
For the point sets constructed separately for each $N$, \Cref{cor:supercritical-upper} asserts that the mesh-ratio upper bound tends to $2$. It does not assert that $\rho_{\Xcal}(X_N)$ itself converges to $2$, since an individual finite point set may have mesh ratio smaller than $2$. In \Cref{sec:nested}, the construction produces one nested infinite sequence; there the universal lower bound of \cite{PZ23} yields an exact limit-superior statement on positive-volume Euclidean domains.
\end{remark}

\subsection{Optimal candidate-pool complexity}
\label{subsec:optimal-candidate-complexity}

The upper bound in \Cref{thm:s-regular-upper} shows that $M$ of order $N\log N$ is sufficient for a bounded mesh ratio. We now prove that this order is also necessary for every method that must select its output from an iid candidate pool. The key observation is that an iid candidate set contains an empty ball whose radius is of order $(\log M/M)^{1/s}$. Every subset of the candidate set inherits this hole.

\begin{lemma}[Empty-ball lower bound]
\label{lem:empty-ball-lower}
Suppose that $(\Xcal,d,\mu)$ is $s$-regular. Let $r>0$ satisfy
$2r\le D_{\Xcal}$ and $c_+r^s\le 1/2$. Then
\begin{equation}
\PP(\varepsilon_M>r)
\ge
1-
c_+2^sr^s
\exp\left(2c_+Mr^s\right).
\label{eq:empty-ball-nonasymptotic}
\end{equation}
\end{lemma}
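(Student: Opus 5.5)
The plan is to exhibit many disjoint balls of radius $r$ and show that, with the claimed probability, at least one of them contains no candidate. First, I will take an inclusion-wise maximal set $\{\bsz_1,\ldots,\bsz_K\}\subset\Xcal$ whose pairwise distances exceed $2r$. Then the closed balls $B(\bsz_j,r)$ are pairwise disjoint. By maximality, the balls $B(\bsz_j,2r)$ cover $\Xcal$. Since $2r\le D_{\Xcal}$, $s$-regularity applies at radius $2r$, and the covering argument from \Cref{lem:covering-number} gives
\[
1\le K c_+(2r)^s,\qquad\text{that is,}\qquad K\ge \frac{1}{c_+2^sr^s}.
\]

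Second, I will note that if some ball $B(\bsz_j,r)$ contains no candidate, then $d(\bsz_j,\Ycal_M)>r$, and hence $\varepsilon_M>r$. It therefore suffices to bound $\PP(\text{every } B(\bsz_j,r) \text{ contains a candidate})$ from above. Write $p_j=\mu(B(\bsz_j,r))\le c_+r^s\le 1/2$. The elementary inequality $1-p\ge e^{-2p}$ for $0\le p\le 1/2$ gives
\[
\PP\bigl(\Ycal_M\cap B(\bsz_j,r)=\emptyset\bigr)=(1-p_j)^M\ge \exp(-2c_+Mr^s)=:q.
\]

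Third, I need to combine these single-ball bounds despite the dependence between balls. Let $N_j=|\Ycal_M\cap B(\bsz_j,r)|$. Because the balls are disjoint, $(N_1,\ldots,N_K)$ is a marginal of a multinomial vector, and multinomial counts are negatively associated (Joag-Dev–Proschan). The events $\{N_j\ge1\}$ are increasing functions of disjoint coordinates, so
\[
\PP\Bigl(\bigcap_j\{N_j\ge1\}\Bigr)\le\prod_{j=1}^K(1-q_j)\le(1-q)^K\le e^{-Kq}\le\frac{1}{Kq}\le c_+2^sr^s\exp(2c_+Mr^s),
\]
where the step $e^{-x}\le 1/x$ holds for $x>0$. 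Taking complements gives \eqref{eq:empty-ball-nonasymptotic}.

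The main obstacle is this decoupling step. If I want to avoid citing negative association, I would instead condition sequentially. Given that balls $1,\ldots,j-1$ are occupied, the remaining points are less likely to fall in ball $j$. Alternatively, one can compute directly that $\PP(N_j\ge1\mid N_1,\ldots,N_{j-1})$ depends on the number of points not yet used, and bound it via a monotone coupling. Either route yields the product bound, after which the rest is routine.
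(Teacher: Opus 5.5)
Your proof is correct, but it decouples the balls differently from the paper.

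The paper uses a second-moment argument. It sets $Z=\sum_j I_j$, where $I_j$ indicates that $B(\bsz_j,r)$ is empty. It checks pairwise negative correlation directly via $(1-p_i-p_j)^M\le((1-p_i)(1-p_j))^M$, deduces $\operatorname{Var}(Z)\le\EE[Z]$, and applies Chebyshev to get $\PP(Z=0)\le 1/\EE[Z]$.

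You instead invoke negative association of multinomial counts (Joag-Dev--Proschan). This gives the full product bound $\PP(\varepsilon_M\le r)\le\prod_j\bigl(1-(1-p_j)^M\bigr)\le e^{-Kq}$, which you then weaken via $e^{-x}\le 1/x$.

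What each route buys:
\begin{itemize}
\item The paper's route is entirely self-contained: a one-line pairwise computation replaces the cited theorem.
\item Your route depends on an external result, but it gives an exponentially stronger tail, $\PP(\varepsilon_M\le r)\le\exp\bigl(-\exp(-2c_+Mr^s)/(c_+2^sr^s)\bigr)$. For example, this would make the failure probability in \Cref{cor:candidate-covering-lower} stretched-exponentially small in $M$ rather than of order $\log M/M^{1-\theta}$.
\end{itemize}

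If you want to avoid the citation, be careful with your first fallback heuristic. Conditionally on the earlier balls, each unused point is \emph{more} likely to land in ball $j$, with probability $p_j/(1-P)$ where $P=\sum_{i<j}p_i$. So a per-point comparison does not work. Your second idea does work. Let $S=m$ be the number of points absorbed by the earlier balls; then
\[
\PP(N_j\ge1\mid S=m,\ \text{earlier configuration})=1-\Bigl(1-\frac{p_j}{1-P}\Bigr)^{M-m},
\]
which is decreasing in $m$. Moreover, $\PP(\text{all earlier balls occupied}\mid S=m)$ is increasing in $m$. Hence conditioning on occupancy of balls $1,\ldots,j-1$ makes $S$ stochastically larger, and $\PP(A_j\mid A_1\cap\cdots\cap A_{j-1})\le\PP(A_j)$ follows.
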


\begin{proof}
Choose an inclusion-wise maximal set $\{\bsz_1,\ldots,\bsz_K\}\subset\Xcal$ whose pairwise distances are greater than $2r$. By maximality, the balls $B(\bsz_j,2r)$ cover $\Xcal$. The upper regularity bound therefore gives
\[
1
\le
\sum_{j=1}^K\mu(B(\bsz_j,2r))
\le
Kc_+(2r)^s,
\]
and hence
\begin{equation}
K\ge\frac{1}{c_+2^sr^s}.
\label{eq:number-disjoint-balls}
\end{equation}
The balls $B(\bsz_1,r),\ldots,B(\bsz_K,r)$ are pairwise disjoint.

Let
\[
I_j
:=
\mathds{1}_{
\Ycal_M\cap B(\bsz_j,r)=\emptyset
},
\quad
Z:=\sum_{j=1}^K I_j.
\]
Writing $p_j=\mu(B(\bsz_j,r))$, we have
$p_j\le c_+r^s\le1/2$. The elementary inequality
\[
1-u\ge e^{-2u},
\quad 0\le u\le\frac12,
\]
gives
\[
\EE[I_j]
=
(1-p_j)^M
\ge
\exp(-2Mp_j)
\ge
\exp(-2c_+Mr^s).
\]
Together with \eqref{eq:number-disjoint-balls}, this yields
\begin{equation}
\EE[Z]
\ge
\frac{
\exp(-2c_+Mr^s)
}{
c_+2^sr^s
}.
\label{eq:expected-empty-balls}
\end{equation}

For $i\ne j$, the corresponding balls are disjoint, and therefore
\[
\EE[I_iI_j]
=
(1-p_i-p_j)^M
\le
\left((1-p_i)(1-p_j)\right)^M
=
\EE[I_i]\EE[I_j].
\]
Thus the variables $I_1,\ldots,I_K$ are pairwise negatively correlated,
and
\[
\operatorname{Var}(Z)
\le
\sum_{j=1}^K\operatorname{Var}(I_j)
\le
\EE[Z].
\]
Chebyshev's inequality now gives
\[
\PP(Z=0)
\le
\frac{\operatorname{Var}(Z)}{(\EE[Z])^2}
\le
\frac{1}{\EE[Z]}
\le
c_+2^sr^s\exp(2c_+Mr^s).
\]
If $Z>0$, one of the balls $B(\bsz_j,r)$ contains no candidate point, so $d(\bsz_j,\Ycal_M)>r$ and hence $\varepsilon_M>r$. This proves \eqref{eq:empty-ball-nonasymptotic}.
\end{proof}

For $\theta\in(0,1)$, define
\begin{equation}
\underline{r}_{M,\theta}
:=
\left(
\frac{\theta\log M}{2c_+M}
\right)^{1/s}.
\label{eq:def-r-lower}
\end{equation}

\begin{corollary}[Lower bound for the candidate covering radius]
\label{cor:candidate-covering-lower}
Suppose that $(\Xcal,d,\mu)$ is $s$-regular. For every fixed $\theta\in(0,1)$ and all sufficiently large $M$,
\begin{equation}
\PP\left[
\varepsilon_M>
\left(
\frac{\theta\log M}{2c_+M}
\right)^{1/s}
\right]
\ge
1-
2^{s-1}\theta
\frac{\log M}{M^{1-\theta}}.
\label{eq:candidate-covering-lower-probability}
\end{equation}
In particular,
\begin{equation}
\varepsilon_M
\asymp_{\PP}
\left(\frac{\log M}{M}\right)^{1/s},
\label{eq:covering-radius-order-probability}
\end{equation}
where the notation means that the ratio is bounded above and below by
positive constants with probability tending to~$1$.
\end{corollary}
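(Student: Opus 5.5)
The proof will be a direct substitution of $r=\underline{r}_{M,\theta}$ into \Cref{lem:empty-ball-lower}, followed by combining the result with the upper tail bound of \Cref{thm:s-regular-upper} to get the two-sided order \eqref{eq:covering-radius-order-probability}.

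First, check the hypotheses of \Cref{lem:empty-ball-lower}. Since $\underline{r}_{M,\theta}^s=\theta\log M/(2c_+M)\to0$ as $M\to\infty$, both $2\underline{r}_{M,\theta}\le D_{\Xcal}$ and $c_+\underline{r}_{M,\theta}^s=\theta\log M/(2M)\le1/2$ hold for all sufficiently large $M$. This is the only place where "sufficiently large $M$" enters.

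Next, evaluate the two factors in \eqref{eq:empty-ball-nonasymptotic}. The exponent is
\[
2c_+M\underline{r}_{M,\theta}^s=\theta\log M,
\]
so $\exp(2c_+M\underline{r}_{M,\theta}^s)=M^\theta$. The prefactor is
\[
c_+2^s\underline{r}_{M,\theta}^s=2^{s-1}\theta\log M/M.
\]
Their product is $2^{s-1}\theta\log M/M^{1-\theta}$, which is exactly \eqref{eq:candidate-covering-lower-probability}.

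For the "in particular" statement, the lower side follows because $\theta<1$ makes the error term $2^{s-1}\theta\log M/M^{1-\theta}$ tend to $0$. Hence $\varepsilon_M\ge(\theta/(2c_+))^{1/s}(\log M/M)^{1/s}$ with probability tending to $1$.

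For the upper side, apply \eqref{eq:candidate-covering-high-probability} with $\delta=\delta_M=1/M$. Then $L_{M,\delta_M}=\log(2^sM^2)\le3\log M$ for large $M$, so
\[
\varepsilon_M\le2\bigl(3/c_-\bigr)^{1/s}(\log M/M)^{1/s}
\]
with probability at least $1-1/M\to1$. A union of the two exceptional events gives the two-sided bound with probability tending to $1$.

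No step is a real obstacle. The only points needing care are the large-$M$ verification of the hypotheses of \Cref{lem:empty-ball-lower} and choosing $\delta_M\to0$ slowly enough that $L_{M,\delta_M}$ stays of order $\log M$.
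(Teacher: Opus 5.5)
Your proposal is correct and follows the paper's proof: substitute $r=\underline{r}_{M,\theta}$ into \Cref{lem:empty-ball-lower}, use $2c_+Mr^s=\theta\log M$, and obtain the upper side of \eqref{eq:covering-radius-order-probability} from \Cref{thm:s-regular-upper}. Your explicit choice $\delta_M=1/M$ with $L_{M,\delta_M}\le 3\log M$ simply fills in the step the paper leaves implicit.
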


\begin{proof}
For sufficiently large $M$, the radius $r=\underline{r}_{M,\theta}$ satisfies the assumptions of \Cref{lem:empty-ball-lower}. Moreover, we have $2c_+Mr^s=\theta\log M$. Substituting this radius into \eqref{eq:empty-ball-nonasymptotic} gives
\[
c_+2^sr^s
\exp(2c_+Mr^s)
=
c_+2^s
\frac{\theta\log M}{2c_+M}
M^\theta
=
2^{s-1}\theta
\frac{\log M}{M^{1-\theta}},
\]
which proves
\eqref{eq:candidate-covering-lower-probability}. The upper estimate in
\eqref{eq:covering-radius-order-probability} follows from
\Cref{thm:s-regular-upper}.
\end{proof}

We now prove the lower bound uniformly over all possible subsets of the candidate cloud.

\begin{theorem}[Lower bound for arbitrary candidate thinning]
\label{thm:arbitrary-thinning-lower}
Suppose that $(\Xcal,d,\mu)$ is $s$-regular, let $2\le N\le M$, and fix $\theta\in(0,1)$. Then, for all sufficiently large $M$,
\begin{equation}
\PP\left[
\inf_{Z\subseteq\Ycal_M,\ |Z|=N}
\rho_{\Xcal}(Z)
\ge
\left(
\frac{
\theta c_-N\log M
}{
2c_+M
}
\right)^{1/s}
\right]
\ge
1-
2^{s-1}\theta
\frac{\log M}{M^{1-\theta}}.
\label{eq:arbitrary-thinning-lower}
\end{equation}
\end{theorem}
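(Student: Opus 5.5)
The plan is to combine the empty-ball lower bound of \Cref{cor:candidate-covering-lower} with the universal separation bound \eqref{eq:separation-upper-bound} from \Cref{prop:natural-scale}. The observation is that every subset $Z\subseteq\Ycal_M$ inherits the largest hole of the candidate cloud. Since $Z\subseteq\Ycal_M$, we have $d(\bsx,Z)\ge d(\bsx,\Ycal_M)$ for every $\bsx\in\Xcal$, and taking the supremum over $\bsx$ gives
\[
h_{\Xcal}(Z)\ge h_{\Xcal}(\Ycal_M)=\varepsilon_M .
\]
This holds deterministically and simultaneously for all subsets, so no union bound over the $\binom{M}{N}$ possible subsets will be needed.

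Next, for any $Z$ consisting of $N\ge2$ distinct points (almost surely the candidates are distinct, so every $N$-subset has $N$ distinct points), \eqref{eq:separation-upper-bound} gives $q(Z)\le (c_-N)^{-1/s}$. Hence, deterministically for every such $Z$,
\[
\rho_{\Xcal}(Z)=\frac{h_{\Xcal}(Z)}{q(Z)}\ge \varepsilon_M\,(c_-N)^{1/s}.
\]
Taking the infimum over $Z$ preserves this lower bound, since the right-hand side does not depend on $Z$.

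Finally, I would invoke \Cref{cor:candidate-covering-lower}. For all sufficiently large $M$, the event
\[
\varepsilon_M>\underline{r}_{M,\theta}=\left(\frac{\theta\log M}{2c_+M}\right)^{1/s}
\]
has probability at least $1-2^{s-1}\theta\log M/M^{1-\theta}$. On this event,
\[
\varepsilon_M (c_-N)^{1/s}\ge \left(\frac{\theta c_-N\log M}{2c_+M}\right)^{1/s},
\]
which is exactly the claimed bound \eqref{eq:arbitrary-thinning-lower}.

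There is no real obstacle. The only points to check are that "sufficiently large $M$" is inherited from the corollary (which requires $2\underline{r}_{M,\theta}\le D_{\Xcal}$ and $c_+\underline{r}_{M,\theta}^s\le1/2$, both independent of $N$), and that the null event of coinciding candidates is harmless.
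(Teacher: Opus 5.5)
Your proposal is correct and follows the paper's own proof essentially verbatim. It uses the same three ingredients: the deterministic inequality $h_{\Xcal}(Z)\ge\varepsilon_M$ holding simultaneously for all subsets, the separation bound \eqref{eq:separation-upper-bound}, and the high-probability event from \Cref{cor:candidate-covering-lower}. Your side remarks, that the largeness condition on $M$ does not depend on $N$ and that the event of coinciding candidates is null, are accurate details the paper leaves implicit.
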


\begin{proof}
Every subset $Z\subseteq\Ycal_M$ with $|Z|=N$ satisfies $h_{\Xcal}(Z) \ge h_{\Xcal}(\Ycal_M) = \varepsilon_M$. On the other hand, \eqref{eq:separation-upper-bound} gives $q(Z)\le(c_-N)^{-1/s}$. Consequently, $\rho_{\Xcal}(Z) \ge \varepsilon_M(c_-N)^{1/s}$. On the event in \eqref{eq:candidate-covering-lower-probability}, this implies
\[
\rho_{\Xcal}(Z)
\ge
\left(
\frac{
\theta c_-N\log M
}{
2c_+M
}
\right)^{1/s}
\]
simultaneously for every $N$-point subset of $\Ycal_M$.
\end{proof}

\begin{corollary}[Necessity of logarithmic oversampling]
\label{cor:necessity-logarithmic-oversampling}
Suppose that $(\Xcal,d,\mu)$ is $s$-regular. Let $N\to\infty$ and let $M=M_N\ge N$ satisfy $M_N=o(N\log N)$. Then
\begin{equation}
\inf_{Z\subseteq\Ycal_{M_N},\,
|Z|=N}
\rho_{\Xcal}(Z)
\to \infty
\quad\text{in probability}.
\label{eq:all-thinning-diverges}
\end{equation}
In particular, the mesh ratio diverges in probability for the output of every deterministic or randomized procedure that is required to select $N$ points from the iid candidate set $\Ycal_{M_N}$.
\end{corollary}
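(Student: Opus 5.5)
The plan is to apply \Cref{thm:arbitrary-thinning-lower} with one fixed $\theta\in(0,1)$, say $\theta=1/2$, and check two things. First, the lower bound $\bigl(\theta c_-N\log M_N/(2c_+M_N)\bigr)^{1/s}$ must tend to infinity. Second, the failure probability $2^{s-1}\theta\log M_N/M_N^{1-\theta}$ must tend to zero. Since $M_N\ge N\to\infty$, the condition ``for all sufficiently large $M$'' in \Cref{thm:arbitrary-thinning-lower} holds once $N$ is large. The failure probability tends to $0$ because $\log M/M^{1/2}\to0$.

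The main step is showing that $N\log M_N/M_N\to\infty$. Since $M_N\ge N$, we have $\log M_N\ge\log N$, so
\[
\frac{N\log M_N}{M_N}\ge\frac{N\log N}{M_N}\to\infty
\]
by the assumption $M_N=o(N\log N)$. This is the only point needing any care. One might worry that $\log M_N$ could be much smaller than $\log N$, but the constraint $M_N\ge N$ rules this out. Raising to the power $1/s>0$ preserves divergence.

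To conclude, fix $K>0$. For all large $N$, the deterministic threshold $\bigl(\theta c_-N\log M_N/(2c_+M_N)\bigr)^{1/s}$ exceeds $K$. Therefore
\[
\PP\Bigl[\inf_{Z}\rho_{\Xcal}(Z)\le K\Bigr]\le 2^{s-1}\theta\frac{\log M_N}{M_N^{1-\theta}}\to0,
\]
which is convergence to infinity in probability.

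The final sentence of the corollary follows because the output of any procedure selecting $N$ points from $\Ycal_{M_N}$ is one of the subsets $Z$ in the infimum, so its mesh ratio is at least the infimum. This holds even if the procedure uses additional independent randomness, since the bound holds pointwise on the candidate event.
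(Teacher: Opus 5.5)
Your proposal is correct and follows the paper's proof: apply the arbitrary-thinning lower bound with a fixed $\theta$, use $M_N\ge N$ to get $\log M_N\ge\log N$ and hence $N\log M_N/M_N\to\infty$, and note that the failure probability $2^{s-1}\theta\log M_N/M_N^{1-\theta}$ tends to zero. Your extra details (that the ``sufficiently large $M$'' requirement holds for large $N$, the explicit $K$-threshold argument, and the pointwise extension to randomized procedures) are accurate elaborations of the same argument.
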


\begin{proof}
Since $M_N\ge N$, we have $\log M_N\ge\log N$. The condition $M_N=o(N\log N)$ implies
\[
\frac{N\log M_N}{M_N}
\ge
\frac{N\log N}{M_N}
\to \infty.
\]
The deterministic lower bound inside the probability in \eqref{eq:arbitrary-thinning-lower} consequently tends to infinity, whereas its failure probability tends to zero for every fixed $\theta\in(0,1)$.
\end{proof}

Combining \Cref{cor:target-mesh-ratio,cor:necessity-logarithmic-oversampling} gives the principal
candidate-complexity conclusion of this section: $M=\Theta(N\log N)$ is the sharp order of the iid candidate-pool size required for constructing an $N$-point set with bounded mesh ratio. A sufficiently large multiple of $N\log N$ allows exact FPS to construct such a set with high probability, whereas $o(N\log N)$ candidates are insufficient for every possible selection rule.

\begin{remark}[The logarithmic factor is specific to iid candidates]
\label{rem:iid-logarithmic-factor}
The lower bound in \Cref{thm:arbitrary-thinning-lower} is not a limitation of FPS. It results from the largest empty regions of an iid candidate cloud and holds for any thinning algorithm. If a different candidate generator satisfies $h_{\Xcal}(\Ycal_M) = O_{\PP}(M^{-1/s})$, a bounded mesh ratio can be obtained with $M=CN$, where $C$ is chosen sufficiently large according to the desired mesh-ratio bound and failure probability.
\end{remark}

\section{Approximate and fast farthest-point sampling}
\label{sec:approximate}

The direct implementation of exact FPS maintains the distance of every candidate point from the current selected set and updates all these distances after every insertion. Its cost is $O(MN)$ distance evaluations. At the critical iid candidate-pool size $M\asymp N\log N$, this becomes $O(N^2\log N)$.

In this section, we first quantify how an inexact farthest-point search affects the geometric quality of the selected set. We then discuss two routes to faster computation. The first combines an existing exact greedy-permutation algorithm for doubling metrics with a probabilistic bound on the spread of the random candidate cloud. The second is a simple multiscale approximate FPS algorithm that does not require a prescribed distance scale and stops automatically after exactly $N$ points have been selected. We subsequently place exact FPS, approximate FPS, and the multiscale construction in a common metric-net framework and explain their relation to single-scale and hierarchical maximal independent sets. Finally, for fixed-dimensional Euclidean spaces, we give a spatial-hashing implementation of the multiscale method and derive its end-to-end complexity bounds. In what follows, for the end-to-end complexity statements, we assume that each independent sample from $\mu$ can be generated in constant time.

\subsection{Mesh-ratio bounds for approximate sampling}
\label{subsec:approximate-bounds}

Let $\Ycal_M\subset\Xcal$ be a deterministic candidate set and let
\[
X_n=\{\bsx_1,\ldots,\bsx_n\},
\quad
1\le n\le N,
\]
be generated by the approximate FPS rule
\[
d(\bsx_{n+1},X_n)
\ge
\alpha_n R_n,
\quad
R_n=h_{\Ycal_M}(X_n),
\]
where $\alpha_n\in(0,1]$. Recall the notation $\underline{\alpha}_n := \min_{1\le j\le n}\alpha_j$.

The following result extends
\Cref{prop:deterministic-exact-fps} to approximate FPS.

\begin{proposition}[Deterministic bounds for approximate FPS]
\label{prop:deterministic-approximate-fps}
Let $\varepsilon_M=h_{\Xcal}(\Ycal_M)$. Then, for every
$2\le n\le N$,
\[
    \rho_{\Xcal}(X_n)
    \le
    \frac{2}{\underline{\alpha}_{n-1}}
    \left(
        1+\frac{\varepsilon_M}{R_{n-1}}
    \right).
\]
If $\varepsilon_M<h_{n-1}^*(\Xcal)$, then
\[
    \rho_{\Xcal}(X_n)
    \le
    \frac{2}{
        \underline{\alpha}_{n-1}
        \left(
            1-\varepsilon_M/h_{n-1}^*(\Xcal)
        \right)
    }.
\]
In particular, if
$\varepsilon_M<h_{N-1}^*(\Xcal)$, then
\begin{equation}
\max_{2\le n\le N}
\rho_{\Xcal}(X_n)
\le
\frac{2}{
\underline{\alpha}_{N-1}
\left(
1-\varepsilon_M/h_{N-1}^*(\Xcal)
\right)
}.
\label{eq:approximate-mesh-simultaneous}
\end{equation}
\end{proposition}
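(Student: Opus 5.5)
The argument mirrors the proof of \Cref{prop:deterministic-exact-fps}, with the exact separation identity \eqref{eq:exact-separation-identity} replaced by the approximate separation bound \eqref{eq:approximate-separation-bound}. For the first inequality, fix $2\le n\le N$. The discretization inequality \eqref{eq:discretization-inequality} gives $h_{\Xcal}(X_n)\le R_n+\varepsilon_M$. Since $R_n$ is nonincreasing in $n$, this is at most $R_{n-1}+\varepsilon_M$. For the denominator, \eqref{eq:approximate-separation-bound} gives $q(X_n)\ge \underline{\alpha}_{n-1}R_{n-1}/2$. Dividing yields
\[
\rho_{\Xcal}(X_n)\le \frac{R_{n-1}+\varepsilon_M}{\underline{\alpha}_{n-1}R_{n-1}/2}=\frac{2}{\underline{\alpha}_{n-1}}\left(1+\frac{\varepsilon_M}{R_{n-1}}\right).
\]
A small check is needed here: $R_{n-1}>0$, which holds because $n-1<n\le N\le M$ and the candidates are distinct, so some candidate lies outside $X_{n-1}$.

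For the second inequality, use the left half of \eqref{eq:discretization-inequality} in reverse. We have $R_{n-1}\ge h_{\Xcal}(X_{n-1})-\varepsilon_M$, and by definition of the optimal covering radius this is at least $h_{n-1}^*(\Xcal)-\varepsilon_M$, which is positive under the hypothesis. Then
\[
1+\frac{\varepsilon_M}{R_{n-1}}\le 1+\frac{\varepsilon_M}{h_{n-1}^*(\Xcal)-\varepsilon_M}=\frac{1}{1-\varepsilon_M/h_{n-1}^*(\Xcal)}.
\]
Substituting this into the first bound gives the claim.

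The simultaneous estimate \eqref{eq:approximate-mesh-simultaneous} follows from two monotonicities. First, $h_k^*(\Xcal)$ is nonincreasing in $k$, because extra points can be added without increasing the covering radius. Hence $h_{n-1}^*(\Xcal)\ge h_{N-1}^*(\Xcal)>\varepsilon_M$, so the hypothesis of the second bound holds for every $n\le N$, and its denominator factor $1-\varepsilon_M/h_{n-1}^*$ is bounded below by $1-\varepsilon_M/h_{N-1}^*$. Second, $\underline{\alpha}_{n-1}\ge\underline{\alpha}_{N-1}$, since it is a minimum over a smaller index set. Taking the maximum over $n$ then finishes the proof.

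No step is a real obstacle. The only point needing care is that \eqref{eq:approximate-separation-bound} combines the approximation factors of all earlier steps, which is why $\underline{\alpha}_{n-1}$ appears rather than $\alpha_{n-1}$. That bound holds because $2q(X_n)=\min_{j<n}d(\bsx_{j+1},X_j)\ge\min_{j<n}\alpha_jR_j\ge\underline{\alpha}_{n-1}R_{n-1}$.
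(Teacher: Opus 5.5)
Your proposal is correct and follows the paper's proof: you replace the exact separation identity with \eqref{eq:approximate-separation-bound}, combine it with $h_{\Xcal}(X_n)\le R_n+\varepsilon_M\le R_{n-1}+\varepsilon_M$, and then obtain the remaining claims exactly as in \Cref{prop:deterministic-exact-fps}, using $R_{n-1}\ge h_{n-1}^*(\Xcal)-\varepsilon_M$ and the monotonicity of $h_k^*(\Xcal)$ and $\underline{\alpha}_k$. One small slip in wording: the bound $R_{n-1}\ge h_{\Xcal}(X_{n-1})-\varepsilon_M$ comes from the right-hand inequality in \eqref{eq:discretization-inequality}, not the left.
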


\begin{proof}
By \eqref{eq:approximate-separation-bound},
\[
    q(X_n)
    \ge
    \frac{\underline{\alpha}_{n-1}}{2}R_{n-1}.
\]
On the other hand,
\[
    h_{\Xcal}(X_n)
    \le
    R_n+\varepsilon_M
    \le
    R_{n-1}+\varepsilon_M.
\]
Dividing the latter inequality by the former proves the first
estimate. The remaining claims follow exactly as in
\Cref{prop:deterministic-exact-fps}.
\end{proof}

The two sources of error enter multiplicatively: $1/\underline{\alpha}_{N-1}$ and $1/(1-\varepsilon_M/h_{N-1}^*(\Xcal))$. For small errors, the excess over~$2$ is of first order in $1-\underline{\alpha}_{N-1}$ and
$\varepsilon_M/h_{N-1}^*(\Xcal)$.

Combining \Cref{prop:deterministic-approximate-fps} with the random-covering estimate from \Cref{thm:s-regular-upper} gives the following result.

\begin{theorem}[Probabilistic bound for approximate FPS]
\label{thm:probabilistic-approximate-fps}
Suppose that $(\Xcal,d,\mu)$ is $s$-regular. Let
$2\le N\le M$, let $0<\delta<1$, and let
\[
\tau_{N,M,\delta}
=
2\left(
\frac{
c_+(N-1)L_{M,\delta}
}{
c_-M
}
\right)^{1/s}
\]
be as in \eqref{eq:def-tau}. If
$\tau_{N,M,\delta}<1$, then
\begin{equation}
\PP\left[
\max_{2\le n\le N}\rho_{\Xcal}(X_n)
\le
\frac{2}{
\underline{\alpha}_{N-1}
(1-\tau_{N,M,\delta})
}
\right]
\ge
1-\delta.
\label{eq:probabilistic-approximate-bound}
\end{equation}
\end{theorem}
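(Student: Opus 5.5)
The plan is to copy the proof of \Cref{thm:s-regular-upper}, replacing the exact-FPS deterministic estimate by its approximate counterpart. The work splits into a probabilistic step about the candidate cloud and a deterministic step about the selection rule.

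\emph{Probabilistic step.} By \eqref{eq:candidate-covering-high-probability} in \Cref{thm:s-regular-upper}, the event $E:=\{\varepsilon_M\le r_{M,\delta}\}$ has probability at least $1-\delta$. This event depends only on the iid candidates, not on the selection rule. The approximation factors $\alpha_n$ may depend on the candidates, or on additional algorithmic randomness. However, \Cref{prop:deterministic-approximate-fps} holds pointwise for every candidate set and every admissible sequence of choices. So nothing about the joint law of the candidates and the algorithm needs to be controlled.

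\emph{Deterministic step.} As in the proof of \Cref{thm:s-regular-upper}, the lower bound in \eqref{eq:optimal-fill-bounds} gives $h_{N-1}^*(\Xcal)\ge(c_+(N-1))^{-1/s}$. Together with the definitions of $r_{M,\delta}$ and $\tau_{N,M,\delta}$, this yields
\[
\frac{r_{M,\delta}}{h_{N-1}^*(\Xcal)}\le\tau_{N,M,\delta}<1 .
\]
Hence, on $E$, we have $\varepsilon_M<h_{N-1}^*(\Xcal)$, so \eqref{eq:approximate-mesh-simultaneous} applies. Since $1/(1-x)$ is increasing in $x\in[0,1)$ and $\varepsilon_M/h_{N-1}^*(\Xcal)\le\tau_{N,M,\delta}$, we obtain
\[
\max_{2\le n\le N}\rho_{\Xcal}(X_n)\le\frac{2}{\underline{\alpha}_{N-1}(1-\tau_{N,M,\delta})}
\]
on $E$. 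Because $\PP(E)\ge 1-\delta$, this gives \eqref{eq:probabilistic-approximate-bound}.

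\emph{Main obstacle.} The argument is mechanical, so the only delicate point is interpretive. If the $\alpha_n$, and hence $\underline{\alpha}_{N-1}$, are random, the event in \eqref{eq:probabilistic-approximate-bound} involves a random threshold. The bound is still valid because the inclusion $E\subseteq\{\cdots\}$ holds realization by realization. I would record this explicitly, and note that measurability of the event is immaterial since it contains the measurable set $E$ (or simply assume the $\alpha_n$ are fixed). A second minor check is that $\varepsilon_M<h_{N-1}^*$ must be strict for the deterministic proposition to apply; strictness follows from $\tau_{N,M,\delta}<1$.
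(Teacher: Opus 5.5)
Your proposal is correct and matches the paper's proof: both work on the event $\{\varepsilon_M\le r_{M,\delta}\}$ of probability at least $1-\delta$ from \Cref{thm:s-regular-upper}, note that on it $\varepsilon_M/h_{N-1}^*(\Xcal)\le\tau_{N,M,\delta}<1$, and conclude from \eqref{eq:approximate-mesh-simultaneous}. Your remarks on strictness and on possibly random $\alpha_n$ are harmless extra care that the paper leaves implicit.
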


\begin{proof}
With probability at least $1-\delta$, we have $\varepsilon_M/h_{N-1}^*(\Xcal) \le \tau_{N,M,\delta}$ by the proof of \Cref{thm:s-regular-upper}. The result now follows from \eqref{eq:approximate-mesh-simultaneous}.
\end{proof}

Similarly to \Cref{cor:target-mesh-ratio} for the exact FPS, we can obtain a prescribed mesh-ratio bound by solving the condition in \Cref{thm:probabilistic-approximate-fps} for $M$.

\begin{corollary}[Asymptotically accurate approximate FPS]
\label{cor:asymptotically-accurate-approximate}
Suppose that $(\mathcal{X},d,\mu)$ is $s$-regular and that $M=M_N\ge N$ satisfies $M_N/(N\log M_N)\to \infty$. For each $N$, let the $N$-point construction use approximation factors $\alpha_{1,N},\ldots,\alpha_{N-1,N}\in(0,1]$, and define $\underline{\alpha}_N := \min_{1\le j<N}\alpha_{j,N}$. If $\underline{\alpha}_N\to1$, then, for every $\eta>0$,
\begin{equation}
\PP\left[
\max_{2\le n\le N}\rho_{\Xcal}(X_n)
\le 2+\eta
\right]
\to 1.
\label{eq:approximate-convergence-two}
\end{equation}
More generally, if $\liminf_{N\to\infty}\underline{\alpha}_N\ge\alpha>0$, then
\begin{equation}
\max_{2\le n\le N}\rho_{\mathcal{X}}(X_n)
    \le \frac{2}{\alpha}+o_{\PP}(1).
\label{eq:fixed-approximation-asymptotic}
\end{equation}
\end{corollary}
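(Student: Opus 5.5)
The plan is to combine \Cref{thm:probabilistic-approximate-fps} with the choice $\delta_N=M_N^{-1}$, exactly as in the proof of \Cref{cor:supercritical-upper}. First, for each $N$ the construction is an $(\alpha_{j,N})$-approximate FPS on $\Ycal_{M_N}$. Then \eqref{eq:probabilistic-approximate-bound} gives, with probability at least $1-\delta_N$,
\[
\max_{2\le n\le N}\rho_{\Xcal}(X_n)\le \frac{2}{\underline{\alpha}_N(1-\tau_{N,M_N,\delta_N})},
\]
provided $\tau_{N,M_N,\delta_N}<1$. With $\delta_N=M_N^{-1}$ we have $L_{M_N,\delta_N}=\max\{1,\log(2^sM_N^2)\}=O(\log M_N)$. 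Hence $\tau_{N,M_N,\delta_N}=O\bigl((N\log M_N/M_N)^{1/s}\bigr)\to0$ by the supercriticality assumption. In particular, $\tau_{N,M_N,\delta_N}<1$ for all large $N$, and $\delta_N\to0$ because $M_N\ge N\to\infty$.

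For \eqref{eq:approximate-convergence-two}, fix $\eta>0$. Since $\underline{\alpha}_N\to1$ and $\tau_{N,M_N,\delta_N}\to0$, the deterministic quantity $b_N:=2/(\underline{\alpha}_N(1-\tau_{N,M_N,\delta_N}))$ tends to $2$. So $b_N\le 2+\eta$ for all large $N$, and the event in \eqref{eq:approximate-convergence-two} contains the high-probability event above. Its probability is therefore at least $1-\delta_N\to1$.

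For \eqref{eq:fixed-approximation-asymptotic}, the claim should be read as follows: for every $\eta>0$, $\PP[\max_{n}\rho_{\Xcal}(X_n)\le 2/\alpha+\eta]\to1$. Since $\liminf\underline{\alpha}_N\ge\alpha$, for every $\eta'>0$ we have $\underline{\alpha}_N\ge\alpha-\eta'$ eventually. Then $\limsup b_N\le 2/(\alpha-\eta')$, and letting $\eta'\downarrow0$ gives $\limsup b_N\le 2/\alpha$. Hence $b_N\le 2/\alpha+\eta$ eventually, and the same argument as before applies. There is no real obstacle here. The only point needing care is to interpret $o_{\PP}(1)$ as the positive part of the excess, $(\max_n\rho_{\Xcal}(X_n)-2/\alpha)_+\to0$ in probability, since the statement is only an upper bound.
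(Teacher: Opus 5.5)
Your proposal is correct and follows the paper's argument: take $\delta_N=M_N^{-1}$, so that $\tau_{N,M_N,\delta_N}\to0$ and $\delta_N\to0$, and then apply \eqref{eq:probabilistic-approximate-bound}. The paper's proof is a two-line sketch of exactly this; your explicit treatment of the $\liminf$ case and of $o_{\PP}(1)$ as a one-sided bound supplies details it leaves implicit.
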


\begin{proof}
As in the proof of \Cref{cor:supercritical-upper}, taking
$\delta_N=M_N^{-1}$ gives $\tau_{N,M_N,\delta_N}\to 0$ and $\delta_N\to 0$. The claims follow from \eqref{eq:probabilistic-approximate-bound}.
\end{proof}

\subsection{Direct and fast exact implementations}
\label{subsec:direct-fast-exact}

The direct implementation of exact FPS stores its current distance
\[
\Delta_n(\bsy):=d(\bsy,X_n),
\]
for every candidate $\bsy\in\Ycal_M$. After selecting $\bsx_{n+1}$, the distances are updated by $\Delta_{n+1}(\bsy)=\min\{\Delta_n(\bsy),d(\bsy,\bsx_{n+1})\}$. A scan of the candidates identifies a maximizer of $\Delta_n$, and another
scan performs the updates.

\begin{proposition}[Direct implementation]
\label{prop:direct-fps-complexity}
The first $N$ points of an exact farthest-point traversal of an $M$-point candidate set can be computed using $O(MN)$ distance evaluations and comparisons, together with $O(M)$ auxiliary storage.
\end{proposition}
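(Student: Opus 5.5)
The plan is to analyze the array-based implementation described just before the statement, counting work step by step. I will maintain an array $\Delta$ of length $M$ indexed by the candidates, together with a boolean array recording which candidates have already been selected. Both arrays take $O(M)$ storage. The selected points $\bsx_1,\ldots,\bsx_N$ can be written to the output as they are produced, or recorded as indices, so the auxiliary storage stays $O(M)$ because $N\le M$.

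First comes initialization. After choosing $\bsx_1\in\Ycal_M$, set $\Delta_1(\bsy)=d(\bsy,\bsx_1)$ for every $\bsy\in\Ycal_M$. This costs $M$ distance evaluations. By definition, $\Delta_1(\bsy)=d(\bsy,X_1)$.

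Next comes the iteration, justified by induction on $n$. Suppose that after step $n$ the array satisfies $\Delta(\bsy)=\Delta_n(\bsy)=d(\bsy,X_n)$ for all $\bsy$.
\begin{enumerate}
\item One linear scan with $M-1$ comparisons finds a maximizer of $\Delta_n$ over $\Ycal_M$, breaking ties by the fixed deterministic rule. The maximum value equals $R_n$, so the maximizer is a valid choice for $\bsx_{n+1}$ in \eqref{eq:exact-fps}.
\item The maximizer is not already selected unless $R_n=0$. That case can occur only when $n=M$, since the candidates are distinct, so it does not arise while $n<N\le M$.
\item A second scan sets $\Delta(\bsy)\leftarrow\min\{\Delta(\bsy),d(\bsy,\bsx_{n+1})\}$. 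This uses $M$ distance evaluations and $M$ comparisons.
\item Since $d(\bsy,X_{n+1})=\min\{d(\bsy,X_n),d(\bsy,\bsx_{n+1})\}$, the invariant $\Delta=\Delta_{n+1}$ is restored.
\end{enumerate}

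For the count, each of the $N-1$ iterations costs $O(M)$ operations and the initialization costs $O(M)$. The total is therefore $O(MN)$ distance evaluations and comparisons.

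There is no real obstacle here. The only points needing care are the correctness of the incremental update, which is the min identity in the last step above, and the observation that the storage does not grow with $N$ beyond $O(M)$.
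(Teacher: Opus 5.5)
Your proposal is correct and takes the same route as the paper. The paper justifies the proposition by exactly this distance-array scheme: one scan finds a maximizer of $\Delta_n$, and a second scan applies the update $\Delta_{n+1}(\bsy)=\min\{\Delta_n(\bsy),d(\bsy,\bsx_{n+1})\}$. Your invariant argument and your check that $R_n>0$ for $n<M$, so that the maximizer is always a new point, make explicit what the paper leaves implicit.
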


The $O(MN)$ cost is specific to the direct implementation and need not be optimal when additional geometric structure is available. For finite metrics of bounded doubling dimension, significantly faster algorithms are known. We recall the relevant result after establishing that $s$-regular spaces have a uniformly bounded doubling constant.

Let $\lambda_{\Xcal}$ denote the smallest integer such that every ball of radius $2r$ in $\Xcal$ can be covered by $\lambda_{\Xcal}$ balls of radius $r$.

\begin{lemma}
\label{lem:s-regular-doubling-constant}
If $(\Xcal,d,\mu)$ is $s$-regular, then $\lambda_{\Xcal} \le \lceil 5^s\, c_+/c_-\rceil$.
\end{lemma}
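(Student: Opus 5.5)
We use a standard packing argument. Fix $\bsx\in\Xcal$ and $r>0$, and take an inclusion-wise maximal set $\{\bsz_1,\ldots,\bsz_K\}\subset B(\bsx,2r)$ whose pairwise distances exceed $r$. Maximality implies that the balls $B(\bsz_j,r)$ cover $B(\bsx,2r)$. The balls $B(\bsz_j,r/2)$, however, are not quite disjoint: two points at distance exactly greater than $r$ could share a point on closed balls of radius $r/2$ only if the distance were at most $r$. Hence they are pairwise disjoint, and they are all contained in $B(\bsx,2r+r/2)\subseteq B(\bsx,5r/2)$.

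We compare measures, assuming first that both radii $r/2$ and $5r/2$ lie in $(0,D_{\Xcal}]$. Then
\[
Kc_-\left(\frac r2\right)^s\le\sum_{j=1}^K\mu(B(\bsz_j,r/2))\le\mu(B(\bsx,5r/2))\le c_+\left(\frac{5r}{2}\right)^s,
\]
which gives $K\le 5^s c_+/c_-$. Since $K$ is an integer, $K\le\lceil 5^sc_+/c_-\rceil$.

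The step needing care is the range of $r$, since $s$-regularity is only assumed for radii up to $D_{\Xcal}$.

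\emph{Case $5r/2>D_{\Xcal}$.} We replace $\mu(B(\bsx,5r/2))$ by $1=\mu(\Xcal)$. Because $\Xcal=B(\bsx,D_{\Xcal})$, we have $\mu(\Xcal)\le c_+D_{\Xcal}^s\le c_+(5r/2)^s$, so the same bound holds.

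\emph{Case $r/2>D_{\Xcal}$.} A single ball of radius $r$ already covers $\Xcal$, so one ball suffices.

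\emph{Intermediate case $D_{\Xcal}<r\le 2D_{\Xcal}$.} Here a single ball of radius $r\ge D_{\Xcal}$ centered at $\bsx$ covers everything, so $K=1$ trivially, and we take a single ball. Likewise, whenever $r\ge D_{\Xcal}$ the trivial cover with one ball applies.

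It therefore suffices to treat $r<D_{\Xcal}$. Then $r/2<D_{\Xcal}$, so the lower bound is valid, and the upper bound is handled by the capping argument above.

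Finally, every ball of radius $2r$ is covered by at most $\lceil 5^sc_+/c_-\rceil$ balls of radius $r$ centered in $\Xcal$. Taking the supremum over $\bsx$ and $r$ yields the stated bound on $\lambda_{\Xcal}$.
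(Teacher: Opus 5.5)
Your proof is correct and follows the paper's argument essentially step for step. Both use a maximal $r$-separated subset of $B(\bsx,2r)$, the disjoint balls $B(\bsz_j,r/2)\subseteq B(\bsx,5r/2)$, the cap $\mu(B(\bsx,5r/2))=1\le c_+D_{\Xcal}^s\le c_+(5r/2)^s$ when $5r/2>D_{\Xcal}$, and the trivial case $r\ge D_{\Xcal}$. Two wording fixes: the balls $B(\bsz_j,r/2)$ are disjoint, so drop the phrase ``not quite disjoint''; and your separate cases $r/2>D_{\Xcal}$ and $D_{\Xcal}<r\le 2D_{\Xcal}$ are already covered by the single case $r\ge D_{\Xcal}$.
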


\begin{proof}
If $r\ge D_{\Xcal}$, then $B(\bsx,r)=\Xcal$, so the claim is immediate. We may therefore assume that $0<r<D_{\Xcal}$. Fix $\bsx\in\Xcal$ and $r>0$, and take an inclusion-wise maximal subset $\{\bsz_1,\ldots,\bsz_K\}$ of $B(\bsx,2r)$, whose pairwise distances are strictly greater than $r$. Maximality implies that the balls $B(\bsz_j,r)$ cover $B(\bsx,2r)$. The balls $B(\bsz_j,r/2)$ are pairwise disjoint and are contained in $B(\bsx,5r/2)$. Therefore
\[
Kc_-\left(\frac r2\right)^s
\le
\mu(B(\bsx,5r/2))
\le
c_+\left(\frac{5r}{2}\right)^s.
\]
If $5r/2>D_{\Xcal}$, the same upper bound follows from
\[
    \mu(B(\bsx,5r/2))
    =
    1
    \le
    c_+D_{\Xcal}^s
    \le
    c_+(5r/2)^s.
\]
It follows that $K \le 5^s\, c_+/c_-$, which proves the claim.
\end{proof}

Every finite subset $P\subseteq \Xcal$, equipped with the induced metric, has doubling constant at most $\lambda_{\Xcal}^2$. Indeed, an ambient ball of radius $2r$ can be covered by at most $\lambda_{\Xcal}^2$ balls of radius $r/2$. For every such ball having a nonempty intersection with $P$, choose one point of the intersection as a new center. The corresponding balls of radius $r$, now centered in $P$, cover the original ball intersected with $P$. 


For a finite set $P\subset\Xcal$ of at least two distinct points,
define its spread by
\begin{equation}
\Phi(P)
:=
\frac{
\operatorname{diam}(P)
}{
\min_{\substack{\bsx,\bsy\in P\\ \bsx\ne \bsy}}d(\bsx,\bsy)
}.
\label{eq:spread-definition}
\end{equation}

The following result is due to Har-Peled and Mendel \cite[Theorem~3.2]{HM06}.

\begin{proposition}[Fast exact greedy permutation]
\label{prop:har-peled-mendel}
Let $P$ be a finite metric space with $|P|=M$, doubling constant $\lambda$, and spread $\Phi(P)$. Assuming constant-time distance evaluations and the unit-cost floating-point word RAM model used in \cite{HM06}, an exact greedy permutation of $P$ can be computed in  $O\left(\lambda^{O(1)}M\log\left(\Phi(P)M\right)\right)$ time and $O\left(\lambda^{O(1)}M\right)$ space.
\end{proposition}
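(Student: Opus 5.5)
Since this proposition is quoted from \cite[Theorem~3.2]{HM06}, I would not re-derive it from scratch. The plan is to check that the setting of \cite{HM06} matches the present one, and to recall the structure of their argument in enough detail that the dependence on $\lambda$, $M$ and $\Phi(P)$ is transparent.

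\textbf{Step 1: matching hypotheses.} I would confirm that the finite metric space $P$, with $|P|=M$ and the induced metric, is exactly the input class of \cite{HM06}. The doubling constant of $P$ is the one defined above; when $P\subseteq\Xcal$ is a candidate cloud it is bounded by $\lambda_{\Xcal}^2$, which is then controlled by \Cref{lem:s-regular-doubling-constant}. The computational model, namely constant-time distance evaluations plus unit-cost floor and logarithm operations on floating-point words, is the model assumed in \cite{HM06}, so no translation is needed there.

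\textbf{Step 2: net-tree with relatives.} Normalize distances so that the minimum interpoint distance is $1$ and the diameter is $\Phi(P)$. Then only $O(\log\Phi(P))$ dyadic scales $2^i$ are relevant. Build a net-tree on $P$: level $i$ is a $2^i$-net, and nested nets give a tree in which every node stores its relatives, meaning the nodes of the same level within distance $O(2^i)$. A packing argument in a doubling space bounds the number of relatives by $\lambda^{O(1)}$, which is where the factor $\lambda^{O(1)}$ in both time and space comes from. In the spread-dependent version, this tree is built level by level. Each point participates in $O(\log\Phi(P))$ levels, but compressed paths reduce the storage to $O(\lambda^{O(1)}M)$ nodes. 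The total work is $O(\lambda^{O(1)}M\log(\Phi(P)M))$, where the $\log M$ term comes from the priority queues used during insertion.

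\textbf{Step 3: simulating exact FPS on the tree.} Each unselected point $\bsy$ is assigned to the selected point or cluster currently responsible for it, and a max-heap keyed by $d(\bsy,X_n)$ is maintained. When $\bsx_{n+1}$ is inserted, only points assigned to selected centers that are relatives of $\bsx_{n+1}$ at the current scale $\approx R_n$ can change their nearest center. The geometric decrease of $R_n$ across scales implies that each point is reassigned $O(\lambda^{O(1)})$ times per scale. This gives $O(\lambda^{O(1)}\log\Phi(P))$ updates per point, each costing $O(\log M)$ for the heap, which yields the stated bound.

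\textbf{Main obstacle.} The hard step is the exactness of the permutation, as opposed to a $(1+\epsilon)$-approximation. One must show that every point whose true distance to $X_n$ changes is actually examined, that is, that the relative lists at scale $R_n$ capture all affected points. This is the core lemma of \cite{HM06}. I would rely on it as stated, rather than reprove it, since it is an external result recorded only to be combined with spread bounds for random candidate clouds.
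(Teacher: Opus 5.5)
Your approach matches the paper's. The proposition is not proved there: it is quoted from \cite[Theorem~3.2]{HM06} and used only as a black box in \Cref{cor:fast-exact-random}. The doubling constant of the candidate cloud is controlled through \Cref{lem:s-regular-doubling-constant} and the $\lambda_{\Xcal}^2$ observation, exactly as in your Step~1.

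The sketch you add to make the dependence on $\lambda$, $M$ and $\Phi(P)$ transparent does not, however, give the stated bound.

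\textbf{The cost accounting in Step~3.} You key a max-heap by $d(\bsy,X_n)$ over all unselected points and charge $O(\log M)$ to each of the $O(\lambda^{O(1)}\log\Phi(P))$ reassignments of each point. That yields $O(\lambda^{O(1)}M\log\Phi(P)\log M)$, a product of logarithms, whereas $\log(\Phi(P)M)=\log\Phi(P)+\log M$.

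\textbf{How the additive bound arises.} The two costs must be separated, as in the Voronoi-partition, neighbor-list and priority-queue algorithm described in \Cref{rem:exact-fast-not-simple}.
\begin{itemize}
\item The heap stores one key per selected center, namely the radius of its cell.
\item Inserting $\bsx_{n+1}$ affects only cells whose centers $c$ satisfy $d(\bsx_{n+1},c)<2R_n$. This is the triangle inequality: $d(\bsx_{n+1},\bsy)<d(\bsy,c)\le R_n$ forces it.
\item By packing there are only $\lambda^{O(1)}$ such centers, so the heap work totals $O(\lambda^{O(1)}M\log M)$.
\item Scanning and moving a point costs $O(1)$ per point touched. While $R_n\in(r,2r]$, the centers that trigger a scan of $\bsy$ lie in $B(\bsy,6r)$ and are pairwise more than $r$ apart. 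Hence each point is touched $\lambda^{O(1)}$ times per dyadic range of $R_n$, for $O(\lambda^{O(1)}M\log\Phi(P))$ in total.
\end{itemize}
Summing the two contributions gives the stated bound.

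\textbf{Other corrections to the sketch.} The same computation shows that exactness is the easy part, since it is just the triangle inequality above. It is not the main obstacle; the substance is the amortized packing count. Also, you need not build a net-tree first and simulate FPS on it: the algorithm works directly on the Voronoi partition of the current prefix.

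None of this affects the citation itself, which is all the paper relies on. But if you keep a sketch, it should be corrected along these lines.
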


For the random candidate cloud, the spread is polynomially bounded with
high probability.

\begin{lemma}[Spread of the random candidate set]
\label{lem:random-candidate-spread}
Suppose that $(\Xcal,d,\mu)$ is $s$-regular. Then, for every $u>0$,
\begin{equation}
\begin{aligned}
\PP\left[
\min_{1\le i<j\le M}
d(Y_i,Y_j)
\le u
\right]
\le
\binom{M}{2}c_+u^s.
\end{aligned}
\label{eq:minimum-distance-tail}
\end{equation}
Consequently, for $0<\delta<1$, with probability at least $1-\delta$,
\begin{equation}
\Phi(\Ycal_M)
\le
D_{\Xcal}
\left(
\frac{
c_+M(M-1)
}{
2\delta
}
\right)^{1/s}.
\label{eq:random-spread-bound}
\end{equation}
\end{lemma}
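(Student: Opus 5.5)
The plan is a union bound over pairs, followed by a conversion into a spread estimate. For a fixed pair $i<j$, condition on $Y_i$ and use independence. Since $Y_j\sim\mu$ is independent of $Y_i$,
\[
\PP\bigl(d(Y_i,Y_j)\le u\mid Y_i\bigr)=\mu\bigl(B(Y_i,u)\bigr).
\]
For $0<u\le D_{\Xcal}$, the upper regularity bound in \eqref{eq:s-regularity} gives $\mu(B(Y_i,u))\le c_+u^s$. For $u>D_{\Xcal}$, we have $\mu(B(Y_i,u))=1\le c_+D_{\Xcal}^s\le c_+u^s$. The inequality $1\le c_+D_{\Xcal}^s$ holds because $\mu(B(\bsx,D_{\Xcal}))=1$, which is the same observation used in the proof of \Cref{lem:s-regular-doubling-constant}. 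Taking expectations yields $\PP(d(Y_i,Y_j)\le u)\le c_+u^s$ for every $u>0$. Summing over the $\binom{M}{2}$ pairs then proves \eqref{eq:minimum-distance-tail}.

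For \eqref{eq:random-spread-bound}, choose $u$ so that the right-hand side of \eqref{eq:minimum-distance-tail} equals $\delta$:
\[
u_\delta:=\left(\frac{2\delta}{c_+M(M-1)}\right)^{1/s}.
\]
With probability at least $1-\delta$, the minimum pairwise distance then exceeds $u_\delta$. On this event the points are distinct, so $\Phi(\Ycal_M)$ is well defined (this needs $M\ge2$). Since $\operatorname{diam}(\Ycal_M)\le D_{\Xcal}$, we get
\[
\Phi(\Ycal_M)\le \frac{D_{\Xcal}}{u_\delta}=D_{\Xcal}\left(\frac{c_+M(M-1)}{2\delta}\right)^{1/s},
\]
which is exactly the stated bound.

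Nothing here is a real obstacle. The only care needed is the case $u>D_{\Xcal}$, which lies outside the range where \eqref{eq:s-regularity} is assumed; it is handled by the trivial bound above. One should also note that the probability is measured through the conditional argument, which requires only measurability of $(x,y)\mapsto d(x,y)$, and this is automatic.
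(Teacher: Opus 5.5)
Your proof is correct and follows the paper's argument. The paper likewise bounds each pair by $\PP(d(Y_i,Y_j)\le u)=\int_{\Xcal}\mu(B(\bsx,u))\,\mu(\mathrm{d}\bsx)\le c_+u^s$ (your conditioning step is this same computation), applies the union bound, chooses $u$ so that the right-hand side equals $\delta$, and uses $\operatorname{diam}(\Ycal_M)\le D_{\Xcal}$; the only difference is that you dispose of the case $u>D_{\Xcal}$ pair by pair rather than for the whole statement at once, which is immaterial.
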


\begin{proof}
For $u>D_{\Xcal}$, the claim is trivial, since $c_+u^s\ge c_+D_{\Xcal}^s\ge1$. Thus we may assume $0<u\le D_{\Xcal}$. For every pair $i<j$,
\[
\PP\bigl(d(Y_i,Y_j)\le u\bigr)
=
\int_{\Xcal}
\mu(B(\bsx,u))
\,\mu(\mathrm{d} x)
\le
c_+u^s.
\]
The union bound proves \eqref{eq:minimum-distance-tail}. Taking
\[
u
=
\left(
\frac{
2\delta
}{
c_+M(M-1)
}
\right)^{1/s}
\]
shows that, with probability at least $1-\delta$, the minimum pairwise distance exceeds $u$. Since $\operatorname{diam}(\Ycal_M)\le D_{\Xcal}$, the spread bound follows.
\end{proof}

\begin{corollary}[Fast exact FPS for random candidates]
\label{cor:fast-exact-random}
Suppose that $(\Xcal,d,\mu)$ is $s$-regular. There exists an exact implementation of FPS such that, with probability at least $1-\delta$, the complete greedy permutation of $\Ycal_M$ can be computed in $O_{s,c_-,c_+,\Xcal}\left(M\log (M/\delta)\right)$ time and $O_{s,c_-,c_+}(M)$ space.
\end{corollary}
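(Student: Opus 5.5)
The plan is to combine \Cref{prop:har-peled-mendel} with \Cref{lem:s-regular-doubling-constant} and \Cref{lem:random-candidate-spread}. The algorithm itself is the Har-Peled--Mendel procedure applied to the candidate set $P=\Ycal_M$ with the induced metric. It is deterministic given the candidates and always outputs an exact greedy permutation. Only its running time is random, through the spread $\Phi(\Ycal_M)$.

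First, the doubling constant. By \Cref{lem:s-regular-doubling-constant}, $\lambda_{\Xcal}\le\lceil 5^sc_+/c_-\rceil$. By the remark following that lemma, every finite subset, in particular $\Ycal_M$, has doubling constant $\lambda\le\lambda_{\Xcal}^2$. This bound depends only on $s,c_-,c_+$ and holds deterministically. Hence the factors $\lambda^{O(1)}$ in \Cref{prop:har-peled-mendel} are constants depending only on $(s,c_-,c_+)$. This already gives the space bound $O_{s,c_-,c_+}(M)$ on every realization.

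Second, the spread. With probability $1$ the candidates are distinct, since $\mu$ is non-atomic. By \Cref{lem:random-candidate-spread}, with probability at least $1-\delta$,
\[
\Phi(\Ycal_M)\le D_{\Xcal}\left(\frac{c_+M(M-1)}{2\delta}\right)^{1/s}.
\]
On this event,
\[
\log\bigl(\Phi(\Ycal_M)M\bigr)\le \log D_{\Xcal}^{+}+\frac1s\log\frac{c_+}{2}+\frac{2}{s}\log M+\frac1s\log\frac1\delta+\log M,
\]
which is $O_{s,c_+,\Xcal}(\log(M/\delta))$. The additive constants are absorbed because $\log(M/\delta)\ge\log 2>0$ whenever $M\ge2$ and $\delta<1$; this is why the constant depends on $\Xcal$ through $D_{\Xcal}$. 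Substituting into the time bound of \Cref{prop:har-peled-mendel} gives $O_{s,c_-,c_+,\Xcal}(M\log(M/\delta))$ time on an event of probability at least $1-\delta$.

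The main subtlety is not analytic. The cited result assumes the word-RAM model and constant-time distance oracle, so the statement inherits those model assumptions. The algorithm must also not need the spread or $\delta$ as input; Har-Peled--Mendel's algorithm does not, so the probabilistic statement concerns only the running time. I would also note the degenerate case $M=1$, or the case where the spread is undefined, as trivial. Finally, the absorption of the additive constants should be checked, which needs $\log(M/\delta)$ bounded away from zero; this holds as noted above.
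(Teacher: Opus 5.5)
Your proposal is correct and follows the paper's proof. The paper likewise bounds the doubling constant of $\Ycal_M$ via \Cref{lem:s-regular-doubling-constant} and the finite-subset remark, bounds $\log(\Phi(\Ycal_M)M)$ by $O(\log(M/\delta))$ on the event of \Cref{lem:random-candidate-spread}, and substitutes both into \Cref{prop:har-peled-mendel}; your explicit absorption of the additive constants using $\log(M/\delta)\ge\log 2$ fills in a step the paper leaves implicit.
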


\begin{proof}
By \Cref{lem:s-regular-doubling-constant}, the doubling constant is bounded in terms of $s, c_-$ and $c_+$. On the event in \eqref{eq:random-spread-bound}, $\log\left(\Phi(\Ycal_M)M\right) = O_{\Xcal,s,c_+} \left( \log (M/\delta) \right)$. The result follows from \Cref{prop:har-peled-mendel}.
\end{proof}

Combining this result with the mesh-ratio estimate gives an end-to-end high-probability statement for exact FPS.

\begin{corollary}[End-to-end complexity of exact FPS]
\label{cor:end-to-end-exact}
Suppose that $(\Xcal,d,\mu)$ is $s$-regular. Let $B>2$ and $0<\delta<1$. If
\begin{equation}
M
\ge
\frac{
2^sc_+
}{
c_-\left(1-2/B\right)^s
}
(N-1)L_{M,\delta/2},
\label{eq:fast-exact-candidate-condition}
\end{equation}
then, with probability at least $1-\delta$,
\begin{equation}
\max_{2\le n\le N}
\rho_{\Xcal}(X_n)
\le B,
\label{eq:fast-exact-mesh-target}
\end{equation}
and the selected points can be computed in $O_{s,c_-,c_+,\Xcal}\left(
M\log (M/\delta)\right)$ time.

In particular, for fixed $B>2$, fixed regularity constants, and $\delta=N^{-\gamma}$ with $\gamma>0$, a sufficiently large choice $M=\left\lceil CN\log N\right\rceil$ gives both
\[
\max_{2\le n\le N}\rho_{\Xcal}(X_n)\le B
\]
and computational cost $O(N\log^2N)$ with probability tending to~$1$.
\end{corollary}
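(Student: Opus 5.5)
The plan is a union bound over two failure events, each allotted probability $\delta/2$. The first event is the failure of the mesh-ratio guarantee. The second is the failure of the spread bound needed by the fast algorithm.

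First, apply \Cref{cor:target-mesh-ratio} with $\delta/2$ in place of $\delta$. Condition \eqref{eq:fast-exact-candidate-condition} is exactly \eqref{eq:sufficient-candidate-size} with $L_{M,\delta/2}$. It therefore gives $\tau_{N,M,\delta/2}\le 1-2/B$, and the event $\{\varepsilon_M\le r_{M,\delta/2}\}$ has probability at least $1-\delta/2$. On this event, \Cref{thm:s-regular-upper} (via \eqref{eq:deterministic-simultaneous-bound}) yields \eqref{eq:fast-exact-mesh-target} for the exact FPS output.

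Second, apply \Cref{lem:random-candidate-spread} with $\delta/2$. With probability at least $1-\delta/2$, $\Phi(\Ycal_M)\le D_{\Xcal}(c_+M(M-1)/\delta)^{1/s}$, so $\log(\Phi(\Ycal_M)M)=O_{\Xcal,s,c_+}(\log(M/\delta))$. On this event, \Cref{prop:har-peled-mendel}, combined with the doubling bound of \Cref{lem:s-regular-doubling-constant} and the remark that finite subsets have doubling constant at most $\lambda_{\Xcal}^2$, computes the full greedy permutation in $O_{s,c_-,c_+,\Xcal}(M\log(M/\delta))$ time. Truncating it to its first $N$ points gives $X_N$.

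Two points need care here. The greedy permutation from Har-Peled--Mendel must coincide with exact FPS started from its chosen first point. It does, because every prefix satisfies \eqref{eq:exact-fps}; ties are harmless, since the estimates of \Cref{prop:deterministic-exact-fps} hold for any tie-breaking and any initial point. Also, the mesh-ratio event depends only on $\Ycal_M$, so both events refer to the same random candidate set and the union bound gives total failure probability at most $\delta$.

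For the ``in particular'' part, take $\delta=N^{-\gamma}$ and $M=\lceil CN\log N\rceil$. Then $L_{M,\delta/2}=(1+\gamma+o(1))\log N$, exactly as in the proof of \Cref{cor:target-mesh-ratio}, so \eqref{eq:fast-exact-candidate-condition} holds for large $N$ once $C$ is large. Finally, $M\log(M/\delta)=O(N\log N\cdot(1+\gamma)\log N)=O(N\log^2N)$, and sampling the $M$ candidates costs only $O(M)$. I expect no genuine obstacle: the main step is checking that the $\delta/2$ split fits the hypothesis as written, which it does by design through $L_{M,\delta/2}$.
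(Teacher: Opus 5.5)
Your proposal is correct and follows the paper's proof: a union bound with failure probability $\delta/2$ assigned to \Cref{cor:target-mesh-ratio} and $\delta/2$ to the spread event behind \Cref{cor:fast-exact-random}, which you simply unpack into \Cref{lem:random-candidate-spread}, \Cref{lem:s-regular-doubling-constant}, and \Cref{prop:har-peled-mendel}. Your remarks on tie-breaking and the choice of initial point, and your check that $L_{M,\delta/2}=(1+\gamma+o(1))\log N$ in the particular case, fill in details the paper leaves implicit.
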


\begin{proof}
The result follows immediately by first applying \Cref{cor:target-mesh-ratio} with failure probability $\delta/2$ and \Cref{cor:fast-exact-random} with failure probability $\delta/2$, and then using the union bound.
\end{proof}

\begin{remark}
\label{rem:exact-fast-not-simple}
The fast exact algorithm in \Cref{prop:har-peled-mendel} maintains a finite Voronoi partition, local neighbor lists, and a priority queue. Although its asymptotic running time is smaller, its implementation is more elaborate than the direct distance-update method. \Cref{alg:multiscale-fps} offers a simpler alternative with an explicit accuracy--cost trade-off.
\end{remark}

\subsection{A multiscale approximate FPS algorithm}
\label{subsec:multiscale-fps}

We now introduce an approximate FPS algorithm that avoids identifying the
global farthest candidate at every iteration. Fix a parameter $\beta>1$.
Starting from $\bsx_1$, let
\[
r_0
:=
h_{\Ycal_M}(\{\bsx_1\})
=
\max_{\bsy\in\Ycal_M}d(\bsy,\bsx_1),
\]
and define geometrically-decreasing thresholds
\[
r_k:=\beta^{-k}r_0,
\quad k\ge1.
\]
At level $k$, the candidate set is scanned once. A candidate is selected whenever its distance from the currently selected set exceeds $r_k$. The algorithm stops immediately when $N$ points have been selected. \Cref{alg:multiscale-fps} describes the workflow of the multiscale approximate FPS.

\begin{algorithm}[t]
\caption{Multiscale approximate farthest-point sampling}
\label{alg:multiscale-fps}
\begin{algorithmic}[1]
\REQUIRE Candidate set $\Ycal_M$, output size $N\le M$, and
parameter $\beta>1$.
\ENSURE An ordered set $X_N\subseteq\Ycal_M$ with $|X_N|=N$.
\STATE Choose $\bsx_1\in\Ycal_M$ and set
$X\leftarrow \{\bsx_1\}$.
\STATE Set
$r_0\leftarrow\max_{\bsy\in\Ycal_M}d(\bsy,\bsx_1)$ and $k\leftarrow0$.
\WHILE{$|X|<N$}
\STATE Set $k\leftarrow k+1$ and $r_k\leftarrow r_{k-1}/\beta$.
\STATE Initialize or rebuild a threshold-neighbor structure for
$X$ at radius $r_k$.
\FOR{each $\bsy\in\Ycal_M\setminus X$ in a fixed order}
\IF{$d(\bsy,X)>r_k$}
\STATE Append $\bsy$ to the ordered set and set
$X\leftarrow X\cup \{\bsy\}$.
\STATE Insert $\bsy$ into the threshold-neighbor structure.
\IF{$|X|=N$}
\RETURN $X$.
\ENDIF
\ENDIF
\ENDFOR
\ENDWHILE
\end{algorithmic}
\end{algorithm}

Although the algorithm uses a sequence of internal thresholds, the user does not need to prescribe a geometric scale. The only additional input, besides the candidate set and $N$, is the approximation parameter $\beta$.

\begin{proposition}[Approximation guarantee and number of levels]
\label{prop:multiscale-guarantee}
\Cref{alg:multiscale-fps} terminates and its output order is a $\beta^{-1}$-approximate farthest-point traversal. More precisely, every selected point satisfies
\begin{equation}
d(\bsx_{n+1},X_n)
>
\frac{1}{\beta}
h_{\Ycal_M}(X_n).
\label{eq:multiscale-approximation}
\end{equation}

Let $K$ be the level at which the $N$th point is selected. If $\varepsilon_M<h_{N-1}^*(\Xcal)$, then
\begin{equation}
K
\le
1+
\left\lceil
\log_{\beta}
\left(
\frac{
R_1
}{
h_{N-1}^*(\Xcal)-\varepsilon_M
}
\right)
\right\rceil.
\label{eq:number-multiscale-levels}
\end{equation}
\end{proposition}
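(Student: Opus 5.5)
The proof splits into three parts: termination, the approximation inequality \eqref{eq:multiscale-approximation}, and the level count \eqref{eq:number-multiscale-levels}.

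\emph{Termination.} The thresholds satisfy $r_k\to0$. The candidates are finitely many and distinct, so $\min_{\bsy\ne\bsy'}d(\bsy,\bsy')>0$. Once $r_k$ falls below this minimum distance, every unselected candidate $\bsy$ satisfies $d(\bsy,X)>r_k$, so the level-$k$ scan selects all remaining candidates. Since $N\le M$, the algorithm therefore reaches $|X|=N$ and stops.

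\emph{Approximation inequality.} The key claim is the invariant that at the start of each level $k\ge2$,
\[
h_{\Ycal_M}(X)\le r_{k-1}.
\]
This holds because the level-$(k-1)$ scan examined every candidate. A candidate with $d(\bsy,X)>r_{k-1}$ at the moment it was scanned was added to $X$. Otherwise its distance to $X$ was already at most $r_{k-1}$, and distances to $X$ only decrease as $X$ grows. At the start of level $1$ we have $X=\{\bsx_1\}$ and $h_{\Ycal_M}(X)=r_0$ by definition. Hence $R_n\le r_{k-1}$ whenever $\bsx_{n+1}$ is selected during level $k$, since $R_n$ is nonincreasing within the level. The selection rule gives $d(\bsx_{n+1},X_n)>r_k=r_{k-1}/\beta\ge R_n/\beta$, which is \eqref{eq:multiscale-approximation}.

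\emph{Level count.} Suppose the algorithm is still running at the start of some level $k\ge2$, with current set $X_n$ for some $n\le N-1$. By the invariant, $R_n\le r_{k-1}$. By \eqref{eq:discretization-inequality} and the definition of $h^*$,
\[
R_n\ge h_{\Xcal}(X_n)-\varepsilon_M\ge h_n^*(\Xcal)-\varepsilon_M\ge h_{N-1}^*(\Xcal)-\varepsilon_M>0,
\]
so $r_{k-1}\ge h_{N-1}^*(\Xcal)-\varepsilon_M$. To express this through $R_1$, note that $R_1=r_0$ because $X_1=\{\bsx_1\}$. Then $r_{k-1}=\beta^{-(k-1)}R_1$, and the inequality gives
\[
k-1\le\log_\beta\bigl(R_1/(h_{N-1}^*(\Xcal)-\varepsilon_M)\bigr).
\]
Applying this to the last level begun before the final one, or equivalently noting that the final level $K$ is begun, yields $K-1\le\lceil\cdot\rceil$ up to the $+1$ slack in \eqref{eq:number-multiscale-levels}.

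The main obstacle is this off-by-one bookkeeping: whether the bound is applied to level $K$ itself or to $K-1$, and the edge case $K=1$. I would handle it by applying the inequality directly at level $K$, which is valid whenever $K\ge2$, and treating $K=1$ as trivial since the right-hand side of \eqref{eq:number-multiscale-levels} is always at least $1$. The ceiling and the extra $+1$ absorb the remaining slack.
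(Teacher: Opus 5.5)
Your proposal is correct and follows the paper's proof essentially step for step. The completed-scan invariant $h_{\Ycal_M}(X)\le r_{k-1}$ gives the $\beta^{-1}$ guarantee, and termination follows from the positive minimum pairwise distance. The level bound comes from applying the invariant at the start of level $K$ (the paper's $X_m$ after the completed scan at level $K-1$), combined with the discretization inequality.

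Two small corrections. Drop the ``or equivalently'' alternative of using the last level before the final one: applying the inequality at $k=K-1$ only yields $K\le 2+\log_\beta(\cdot)$, which is off by one when $\log_\beta(\cdot)$ is an integer. Also, the $K=1$ case rests on $R_1\ge h_{N-1}^*(\Xcal)-\varepsilon_M$, which is the $n=1$ instance of your displayed chain; you should state this rather than assert that the right-hand side is ``always'' at least $1$.
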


\begin{proof}
At the beginning of the first level, $h_{\Ycal_M}(X_1)=r_0$. After a full scan at level $k$, every candidate is within distance $r_k$ of the selected set. Indeed, a candidate that is not selected has distance at most $r_k$ when it is inspected, and this distance cannot increase as additional points are selected. Thus, at the beginning of level $k$, we have $h_{\Ycal_M}(X) \le r_{k-1}$. The residual covering radius can only decrease during the scan. Every point selected at level $k$ has insertion distance greater than
\[
r_k
=
\frac{r_{k-1}}{\beta}
\ge
\frac{1}{\beta}
h_{\Ycal_M}(X).
\]
This proves \eqref{eq:multiscale-approximation}.

Since the candidate set is finite and consists of distinct points, its minimum pairwise distance is positive. Once $r_k$ is smaller than this minimum distance, every remaining candidate is selected during the scan. Thus, the algorithm terminates.

Suppose that the $N$th point is selected at level $K$. If $K\ge2$, let
$X_m$ be the selected set after the completed scan at level $K-1$. Then $m<N$ and $h_{\Ycal_M}(X_m)\le r_{K-1}$. By the discretization inequality, $h_{\Xcal}(X_m) \le r_{K-1}+\varepsilon_M$. On the other hand, $h_{\Xcal}(X_m) \ge h_m^*(\Xcal) \ge h_{N-1}^*(\Xcal)$, so $r_{K-1} \ge h_{N-1}^*(\Xcal)-\varepsilon_M$. Since $r_{K-1} = \beta^{-(K-1)}R_1$, the desired estimate follows. When $K=1$, the estimate is immediate from $R_1 \ge h_{\Xcal}(X_1)-\varepsilon_M \ge h_{N-1}^*(\Xcal)-\varepsilon_M$.
\end{proof}

Applying \Cref{prop:deterministic-approximate-fps} with $\alpha=\beta^{-1}$ gives the following immediate consequence.

\begin{corollary}
\label{cor:multiscale-mesh-bound}
If $\varepsilon_M<h_{N-1}^*(\Xcal)$, then the output of \Cref{alg:multiscale-fps} satisfies
\begin{equation}
\max_{2\le n\le N}
\rho_{\Xcal}(X_n)
\le
\frac{
2\beta
}{
1-\varepsilon_M/h_{N-1}^*(\Xcal)
}.
\label{eq:multiscale-mesh-bound}
\end{equation}
If $(\Xcal,d,\mu)$ is $s$-regular and
$\tau_{N,M,\delta}<1$, then
\begin{equation}
\PP\left[
\max_{2\le n\le N}
\rho_{\Xcal}(X_n)
\le
\frac{2\beta}{
1-\tau_{N,M,\delta}
}
\right]
\ge
1-\delta.
\label{eq:multiscale-probabilistic-bound}
\end{equation}
\end{corollary}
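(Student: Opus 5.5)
The corollary follows by combining two earlier results. \Cref{prop:multiscale-guarantee} shows that the output of \Cref{alg:multiscale-fps} is an approximate farthest-point traversal. \Cref{prop:deterministic-approximate-fps} and \Cref{thm:probabilistic-approximate-fps} then convert approximation factors into mesh-ratio bounds. So we only need to identify the approximation factors and substitute them.

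\textbf{Step 1.} Apply \eqref{eq:multiscale-approximation}. Every insertion satisfies
\[
d(\bsx_{n+1},X_n) > \beta^{-1}h_{\Ycal_M}(X_n) = \beta^{-1}R_n .
\]
Hence \eqref{eq:approximate-fps} holds with $\alpha_n=\beta^{-1}$ for $n=1,\ldots,N-1$, and $\underline{\alpha}_{N-1}=\beta^{-1}$. One point needs checking: $X_n$ here must be the ordered prefix produced by the algorithm. The algorithm appends points in the order they are selected, so the prefixes $X_n$ are exactly the sets to which \Cref{prop:deterministic-approximate-fps} applies. That proposition is stated for a deterministic candidate set, so it applies to the realized $\Ycal_M$.

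\textbf{Step 2.} Assume $\varepsilon_M<h_{N-1}^*(\Xcal)$. Substituting $\underline{\alpha}_{N-1}=\beta^{-1}$ into \eqref{eq:approximate-mesh-simultaneous} gives
\[
\max_{2\le n\le N}\rho_{\Xcal}(X_n)\le \frac{2\beta}{1-\varepsilon_M/h_{N-1}^*(\Xcal)},
\]
which is \eqref{eq:multiscale-mesh-bound}.

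\textbf{Step 3.} For the probabilistic statement, we use the proof of \Cref{thm:s-regular-upper}. On an event of probability at least $1-\delta$,
\[
\varepsilon_M\le r_{M,\delta}
\qquad\text{and}\qquad
\frac{r_{M,\delta}}{h_{N-1}^*(\Xcal)}\le\tau_{N,M,\delta}<1 .
\]
In particular $\varepsilon_M<h_{N-1}^*(\Xcal)$ on this event, so Step 2 applies there. Since $t\mapsto 2\beta/(1-t)$ is increasing on $[0,1)$, the bound is at most $2\beta/(1-\tau_{N,M,\delta})$, which gives \eqref{eq:multiscale-probabilistic-bound}. Equivalently, this is \Cref{thm:probabilistic-approximate-fps} with $\underline{\alpha}_{N-1}=\beta^{-1}$.

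There is no real obstacle. The only subtlety is that the approximation factor is deterministic, because it is fixed by $\beta$ and does not depend on the randomness. The algorithm's output is therefore a valid $(\alpha_n)$-approximate FPS for every realization of the candidates, and the probabilistic bound transfers directly.
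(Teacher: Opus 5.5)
Your proposal is correct and follows the paper's argument. The paper obtains the corollary by inserting the $\beta^{-1}$-approximation guarantee of \Cref{prop:multiscale-guarantee} into \Cref{prop:deterministic-approximate-fps}. For the probabilistic bound it uses the covering event from \Cref{thm:s-regular-upper}, which is the same as applying \Cref{thm:probabilistic-approximate-fps} with $\underline{\alpha}_{N-1}=\beta^{-1}$.
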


\subsection{Metric-net interpretation and maximal independent sets}\label{subsec:metric-net-mis}

The preceding constructions admit a common interpretation as packing--covering certificates on the candidate set. This viewpoint also clarifies their relation to maximal independent sets. Let $\Ycal\subseteq\Xcal$ be finite, write $\varepsilon(\Ycal):=h_{\Xcal}(\Ycal)$, and, for $S\subseteq\Ycal$ with $|S|\ge2$, set 
\[ \operatorname{sep}(S):=2q(S). \]
Given $r>0$, $a>0$, and $b\ge0$, we say that $S$ has an \emph{$(a,b;r)$-net certificate} in $\Ycal$ if 
\[ h_{\Ycal}(S)\le br \quad\text{and}\quad \operatorname{sep}(S)\ge ar. \]

\begin{proposition}[Candidate-to-domain lifting] 
\label{prop:candidate-net-lifting} 
If $S\subseteq\Ycal\subseteq\Xcal$ has an $(a,b;r)$-net certificate, then \begin{equation} 
h_{\Xcal}(S)\le br+\varepsilon(\Ycal), \quad q(S)\ge\frac{ar}{2}, \quad \rho_{\Xcal}(S) \le \frac{2}{a} \left( b+\frac{\varepsilon(\Ycal)}{r} \right). 
\label{eq:candidate-net-lifting} 
\end{equation} 
\end{proposition}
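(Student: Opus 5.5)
The statement is a direct lifting of the two certificate inequalities from the candidate set $\Ycal$ to the domain $\Xcal$. I would prove the three claims in the order stated, the third being a quotient of the first two, essentially repeating the argument behind the discretization inequality \eqref{eq:discretization-inequality} with a general finite $\Ycal$ in place of $\Ycal_M$.

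\textbf{Covering bound.} Fix $\bsx\in\Xcal$. Since $\Ycal$ is finite, the infimum defining $d(\bsx,\Ycal)$ is attained, so there is $\bsy\in\Ycal$ with $d(\bsx,\bsy)\le\varepsilon(\Ycal)$. For every $\bsa\in S$ the triangle inequality gives $d(\bsx,\bsa)\le d(\bsx,\bsy)+d(\bsy,\bsa)$. Minimizing over $\bsa\in S$ yields
\[
d(\bsx,S)\le \varepsilon(\Ycal)+d(\bsy,S)\le \varepsilon(\Ycal)+h_{\Ycal}(S)\le \varepsilon(\Ycal)+br.
\]
Taking the supremum over $\bsx\in\Xcal$ gives $h_{\Xcal}(S)\le br+\varepsilon(\Ycal)$.

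\textbf{Separation bound.} This needs no argument beyond unwinding the definition. Because $|S|\ge2$, $q(S)=\operatorname{sep}(S)/2\ge ar/2>0$. It is worth noting that $ar>0$, so this also shows the points of $S$ are distinct and the mesh ratio is well defined.

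\textbf{Mesh-ratio bound.} Divide the covering bound by the separation bound:
\[
\rho_{\Xcal}(S)=\frac{h_{\Xcal}(S)}{q(S)}\le\frac{br+\varepsilon(\Ycal)}{ar/2}=\frac{2}{a}\left(b+\frac{\varepsilon(\Ycal)}{r}\right).
\]
There is no real obstacle. The only points needing care are that the infimum over $\Ycal$ is attained by finiteness, and that the positive lower bound on $q(S)$ justifies the division.
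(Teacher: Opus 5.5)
Your proposal is correct and follows essentially the same route as the paper: pick $\bsy\in\Ycal$ within $\varepsilon(\Ycal)$ of $\bsx$, use the triangle inequality through the candidate covering bound $h_{\Ycal}(S)\le br$, and read off $q(S)=\operatorname{sep}(S)/2\ge ar/2$. The mesh-ratio bound is then the quotient of the two estimates, exactly as in the paper.
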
 

\begin{proof} 
For any $\bsx\in\Xcal$, choose $\bsy\in\Ycal$ with $d(\bsx,\bsy)\le\varepsilon(\Ycal)$ and then $\bsz\in S$ with $d(\bsy,\bsz)\le br$. The triangle inequality gives $d(\bsx,S)\le\varepsilon(\Ycal)+br$. Taking the supremum over $\bsx$, and then using $q(S)=\operatorname{sep}(S)/2$, proves the result. 
\end{proof} 

For a traversal of $\Ycal$, write $R_n:=h_{\Ycal}(X_n)$. An exact FPS prefix $X_n$ has the certificate 
\[ \left( 1,\frac{R_n}{R_{n-1}};R_{n-1} \right), \] 
whereas an approximate FPS prefix has 
\[ \left( \underline{\alpha}_{n-1}, \frac{R_n}{R_{n-1}};R_{n-1} \right). \] 
Indeed, both have candidate covering radius $R_n$, while their separations are $R_{n-1}$ and at least $\underline{\alpha}_{n-1}R_{n-1}$, respectively. Since $R_n\le R_{n-1}$, \eqref{eq:candidate-net-lifting} recovers the deterministic bounds in \Cref{sec:exact,subsec:approximate-bounds}. Thus the difference between exact and approximate FPS lies entirely in the separation part of the certificate. For $r>0$, define the closed $r$-neighborhood graph $G_r(\Ycal)=(\Ycal,E_r)$ by 
\[ \{\bsx,\bsy\}\in E_r \quad \iff \quad \bsx\ne\bsy \ \text{ and }\ d(\bsx,\bsy)\le r. \] 
When $\Ycal=\Ycal_M$ is an iid candidate cloud, $G_r(\Ycal_M)$ is the random geometric graph associated with $(\Xcal,d,\mu)$. Here a maximal independent set means an inclusion-wise maximal independent set, not necessarily one of maximum cardinality. 

\begin{corollary}[Single-scale maximal independent sets] 
\label{cor:single-scale-mis} 
Let $S_r$ be a maximal independent set of $G_r(\Ycal)$ with $|S_r|\ge2$. Then $S_r$ has a $(1,1;r)$-net certificate and 
\begin{equation} 
\rho_{\Xcal}(S_r) \le 2\left( 1+\frac{\varepsilon(\Ycal)}{r} \right). 
\label{eq:single-scale-mis-bound} 
\end{equation} 
\end{corollary}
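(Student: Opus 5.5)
The plan is to verify the two halves of the $(1,1;r)$-net certificate directly from the definition of a maximal independent set in $G_r(\Ycal)$. Once both halves are in hand, we invoke \Cref{prop:candidate-net-lifting} with $a=b=1$. That substitution gives exactly \eqref{eq:single-scale-mis-bound}, because $\frac{2}{a}\bigl(b+\varepsilon(\Ycal)/r\bigr)=2\bigl(1+\varepsilon(\Ycal)/r\bigr)$.

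\emph{Separation.} Independence means that no two distinct points of $S_r$ are joined by an edge of $E_r$. By the definition of the closed neighborhood graph, this says $d(\bsx,\bsy)>r$ for all distinct $\bsx,\bsy\in S_r$. Since $|S_r|\ge2$ and $S_r$ is finite, the minimum over pairs is attained. Hence $\operatorname{sep}(S_r)=2q(S_r)>r\ge 1\cdot r$. Note that the inequality is strict, but only $\ge$ is needed.

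\emph{Covering.} Take any $\bsy\in\Ycal$. If $\bsy\in S_r$, then $d(\bsy,S_r)=0\le r$. Otherwise $S_r\cup\{\bsy\}$ is not independent, by inclusion-wise maximality. Because $S_r$ itself is independent, the offending edge must involve $\bsy$, so there is some $\bsz\in S_r$ with $d(\bsy,\bsz)\le r$. Taking the maximum over the finite set $\Ycal$ gives $h_{\Ycal}(S_r)\le r=1\cdot r$.

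There is no real obstacle here. The only point needing care is that the closed-ball convention for $E_r$ is what makes the two inequalities complementary: non-edges give strict separation $>r$, and edges give covering $\le r$. Both facts then feed into \Cref{prop:candidate-net-lifting} without modification.
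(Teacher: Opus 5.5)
Your proof is correct and follows the paper's argument exactly: independence gives $\operatorname{sep}(S_r)>r$, maximality gives $h_{\Ycal}(S_r)\le r$, and \Cref{prop:candidate-net-lifting} with $a=b=1$ yields the bound. The only difference is that you spell out the finiteness and closed-ball details that the paper leaves implicit.
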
 

\begin{proof} 
Independence gives $\operatorname{sep}(S_r)>r$. If some $\bsy\in\Ycal\setminus S_r$ satisfied $d(\bsy,S_r)>r$, it could be added while preserving independence, contradicting maximality. Hence $h_{\Ycal}(S_r)\le r$, and \Cref{prop:candidate-net-lifting} applies. 
\end{proof} 

Such maximal separated sets are the usual metric nets; see, for example, \cite{HM06}. Exact FPS generates maximal independent sets without requiring the scale as an input. Fix $r>0$ and let $k$ be the first index such that $R_k\le r$. If $k\ge2$, then $R_{k-1}>r$, so $X_k$ is independent in $G_r(\Ycal)$, while $R_k\le r$ implies maximality; the case $k=1$ is immediate. Thus FPS is a cardinality-controlled nested net construction, whereas a single-scale MIS is scale-controlled. The latter's cardinality may depend both on the candidate geometry and on the particular maximal extension. 

The multiscale method in \Cref{alg:multiscale-fps} provides a direct bridge between the two viewpoints. Let $S_k$ denote the selected set after a complete scan at level $k$, with $S_0=\{\bsx_1\}$. Then $S_k$ is a maximal independent set of $G_{r_k}(\Ycal_M)$: the selection rule preserves independence at scale $r_k$, and a completed scan gives $h_{\Ycal_M}(S_k)\le r_k$. If the algorithm terminates with $N\ge2$ points during level $K$, its output contains $S_{K-1}$ and remains independent at scale $r_K$. Therefore, 
\begin{equation} 
h_{\Ycal_M}(X_N) \le r_{K-1}, \quad \operatorname{sep}(X_N) >r_K=\frac{r_{K-1}}{\beta}, \quad \rho_{\Xcal}(X_N) \le 2\beta \left( 1+\frac{\varepsilon_M}{r_{K-1}} \right). 
\label{eq:hierarchical-mis-certificate} 
\end{equation} 
Thus the completed levels are single-scale metric nets, while an intermediate output has a $(\beta^{-1},1;r_{K-1})$-net certificate. Combining the lower bound on $r_{K-1}$ established in the proof of \Cref{prop:multiscale-guarantee} with \eqref{eq:hierarchical-mis-certificate} recovers \Cref{cor:multiscale-mesh-bound}. In this precise sense, \Cref{alg:multiscale-fps} is also a hierarchical MIS construction. 

Finally, on an $s$-regular space, the scale of a single-scale MIS controls its output cardinality up to constants. 
\begin{lemma}[Scale--cardinality relation for MIS nets] 
\label{lem:mis-scale-cardinality} 
Suppose that $(\Xcal,d,\mu)$ is $s$-regular, let $S_r$ be a maximal independent set of $G_r(\Ycal)$, and assume that 
\[ \varepsilon(\Ycal)\le\eta r \quad\text{and}\quad (1+\eta)r\le D_{\Xcal} \] 
for some $\eta\ge0$. Then 
\begin{equation} 
\frac{1}{c_+(1+\eta)^s r^s} \le |S_r| \le \frac{2^s}{c_-r^s}. 
\label{eq:mis-scale-cardinality} 
\end{equation} 
Consequently, if $\Ycal=\Ycal_M$ is an iid candidate cloud, 
\[ r_M\asymp \left( \frac{\log M}{M} \right)^{1/s}, \quad \PP(\varepsilon_M\le\eta r_M)\to 1, \] 
then we have 
\[ |S_{r_M}| \asymp_{\PP} \frac{M}{\log M}. \] 
Hence an MIS output of order $N$ corresponds, at the level of orders, to the same candidate budget $M\asymp N\log N$ as in the fixed-cardinality FPS construction. \end{lemma}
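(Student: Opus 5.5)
The two-sided bound \eqref{eq:mis-scale-cardinality} comes from a packing argument for the upper bound and a covering argument for the lower bound. The probabilistic consequence then follows by plugging in the random-covering estimates from \Cref{sec:exact}.

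\emph{Upper bound.} Independence in $G_r(\Ycal)$ means the points of $S_r$ have pairwise distances strictly greater than $r$. Hence the closed balls $B(\bsz,r/2)$, $\bsz\in S_r$, are pairwise disjoint. This is the same situation as in the proof of \Cref{lem:covering-number}. If $r/2\le D_{\Xcal}$, lower regularity gives
\[
1\ge|S_r|c_-(r/2)^s .
\]
If $r/2>D_{\Xcal}$, two points cannot be more than $r$ apart, so $|S_r|\le1$. Moreover $c_-D_{\Xcal}^s\le1$, so $2^s/(c_-r^s)\ge1$ and the bound holds trivially.

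\emph{Lower bound.} Maximality gives $h_{\Ycal}(S_r)\le r$, as in \Cref{cor:single-scale-mis}. By \Cref{prop:candidate-net-lifting} with $b=1$, this yields
\[
h_{\Xcal}(S_r)\le r+\varepsilon(\Ycal)\le(1+\eta)r .
\]
So the balls $B(\bsz,(1+\eta)r)$, $\bsz\in S_r$, cover $\Xcal$. Since $(1+\eta)r\le D_{\Xcal}$, upper regularity applies at this radius, and
\[
1\le|S_r|\,c_+(1+\eta)^sr^s .
\]
This is the first-paragraph argument of \Cref{prop:natural-scale}.

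\emph{Probabilistic consequence.} Take $r_M\asymp(\log M/M)^{1/s}$ with the constant chosen large enough that $\PP(\varepsilon_M\le\eta r_M)\to1$ for some fixed $\eta$. This is possible by \Cref{thm:s-regular-upper} with, for example, $\delta=1/M$, which gives $\varepsilon_M\lesssim(\log M/M)^{1/s}$ with probability tending to one. For large $M$ we also have $(1+\eta)r_M\le D_{\Xcal}$. On that event the deterministic bounds give
\[
|S_{r_M}|\asymp r_M^{-s}\asymp M/\log M ,
\]
with constants depending only on $s,c_\pm,\eta$. The final remark is just the inversion $N\asymp M/\log M\iff M\asymp N\log N$, using $\log M\asymp\log N$ when $M$ is polynomial in $N$.

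The main point needing care is the compatibility of the hypotheses in the random step. The constant in $r_M$ must be large relative to the covering constant $2(L/c_-)^{1/s}$ so that $\eta$ stays fixed, and $r_M$ must be small enough that $(1+\eta)r_M\le D_{\Xcal}$. Once these are arranged, everything else is routine.
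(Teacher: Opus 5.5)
Your proof is the same as the paper's. Disjointness of the balls $B(\bsz,r/2)$ together with lower regularity gives the upper bound. The covering estimate $h_{\Xcal}(S_r)\le(1+\eta)r$ from \Cref{prop:candidate-net-lifting}, combined with upper regularity, gives the lower bound, and the asymptotic claim then follows by substituting the order of $r_M$.

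One correction: your extra case $r/2>D_{\Xcal}$ is vacuous, because the hypothesis $(1+\eta)r\le D_{\Xcal}$ forces $r\le D_{\Xcal}$. That is fortunate, since the reasoning you give there is invalid. For $r>2D_{\Xcal}$, the inequality $c_-D_{\Xcal}^s\le1$ does not imply $2^s/(c_-r^s)\ge1$. In fact the upper bound is false for all sufficiently large $r$: $|S_r|\ge1$, while the right-hand side tends to $0$.
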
 

\begin{proof} 
The balls $B(\bsx,r/2)$, $\bsx\in S_r$, are pairwise disjoint, and hence $|S_r|c_-(r/2)^s\le 1$. On the other hand, \Cref{prop:candidate-net-lifting} gives $h_{\Xcal}(S_r) \le r+\varepsilon(\Ycal) \le (1+\eta)r$. The balls $B(\bsx,(1+\eta)r)$, $\bsx\in S_r$, therefore cover $\Xcal$, so $1\le |S_r|c_+(1+\eta)^sr^s$. These inequalities prove \eqref{eq:mis-scale-cardinality}, and the asymptotic statement follows by substituting the assumed order of $r_M$. \end{proof} The two parametrizations are therefore complementary. FPS takes the desired cardinality $N$ as input and determines the geometric scale adaptively. A single-scale MIS takes the resolution $r$ as input and returns a cardinality controlled only up to constants. Hierarchical MIS reconciles the two viewpoints: stopping between two completed scales gives an arbitrary prescribed cardinality and is precisely the metric-net mechanism underlying the multiscale approximate-FPS construction. We retain FPS as the primary formulation because the problem studied in this paper prescribes the number of output points.

\subsection{Euclidean implementation and end-to-end complexity}
\label{subsec:end-to-end-complexity}

We next show that \Cref{alg:multiscale-fps} has a particularly simple implementation when $\Xcal\subset\RR^d$ is equipped with the Euclidean metric. At level $k$, all selected points are more than $r_k$ apart. Indeed, every point selected at an earlier level was inserted at a threshold at least $r_k$, while every point selected during the current level has distance greater than $r_k$ from all previously selected points.

Partition $\RR^d$ into axis-parallel cells of side length $\ell_k:=r_k/\sqrt d$. Since every such cell has diameter $r_k$, each cell contains at most one selected point. To determine whether $d(\bsy,X)\le r_k$, it suffices to inspect the cells whose index differs from the cell containing $\bsy$ by at most $\lceil\sqrt d\rceil+1$ in each coordinate. Thus at most $C_d := \left(2\lceil\sqrt d\rceil+3\right)^d$ selected points need to be checked.

The occupied cells can be stored in a hash table whose keys are integer cell indices. When the threshold is decreased, the hash table is rebuilt using the already selected points.

\begin{proposition}[Grid implementation]
\label{prop:grid-implementation}
Suppose that $\Xcal\subset\RR^d$ and that Euclidean distances are used. If \Cref{alg:multiscale-fps} terminates at level $K$, then it can be implemented using $O(C_dMK)$ distance evaluations and expected arithmetic time, under standard constant-expected-time hashing. Its storage cost is $O(M+N)$.
\end{proposition}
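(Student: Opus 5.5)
We count the work level by level and then sum over the $K$ levels. The invariant to check first is the one stated before the proposition: at level $k$, all selected points are pairwise more than $r_k$ apart. Points from earlier levels were inserted at thresholds $r_j\ge r_k$. Points inserted during level $k$ are more than $r_k$ from everything already present. With cells of side $\ell_k=r_k/\sqrt d$, hence diameter $r_k$, each cell therefore contains at most one selected point. The hash table keyed by integer cell indices thus stores at most one point per key and has at most $|X|\le N$ entries.

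Next comes correctness of the neighbor query. If $d(\bsy,\bsx)\le r_k$, then in each coordinate $|y_i-x_i|\le r_k=\sqrt d\,\ell_k$, so the cell indices differ by at most $\lceil\sqrt d\rceil+1$. It is therefore enough to scan the $(2\lceil\sqrt d\rceil+3)^d=C_d$ cells in that index box, which is a superset of what is needed. Computing $d(\bsy,\bsx)$ for the at most $C_d$ stored points found there decides exactly whether $d(\bsy,X)\le r_k$. Each test costs $O(C_d)$ expected-time hash lookups and at most $C_d$ distance evaluations. An insertion costs one expected-constant-time hash operation.

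The per-level cost is as follows. Rebuilding the table at the new threshold means inserting at most $N\le M$ points, which takes $O(N)$ expected time. The scan inspects at most $M$ candidates, each for $O(C_d)$. A level therefore costs $O(C_dM)$ distance evaluations and expected time, and summing over the $K$ levels gives $O(C_dMK)$. Computing $r_0$ adds one more $O(M)$ pass, which is absorbed. For storage, we keep the candidate array ($O(M)$), the ordered output together with a membership flag per candidate ($O(M+N)$), and a single hash table with at most $N$ entries. The table is discarded at each rebuild, so the total storage is $O(M+N)$.

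The main point to verify carefully is the one-point-per-cell invariant across levels, since it is what bounds each query by $C_d$ regardless of $M$. The indexing bound also needs the floor convention on cell indices, which is where the extra $+1$ comes from. Apart from that, the argument is routine bookkeeping under the constant-expected-time hashing assumption.
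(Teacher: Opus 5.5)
Your proposal is correct and follows essentially the same argument as the paper. The one-point-per-cell invariant at scale $r_k$ bounds each threshold query by $C_d$ distance evaluations, the per-level rebuild costs $O(N)$ and is absorbed since $N\le M$, and summing over the $K$ levels gives $O(C_dMK)$, with the neighbor-box correctness and storage accounting (left implicit in the paper) spelled out (and with floor-indexed cells the index offset is in fact at most $\lceil\sqrt d\rceil$, so the extra $+1$ is harmless slack rather than something the floor convention forces).
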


\begin{proof}
At each level, every candidate is inspected once. The threshold-neighbor query requires at most $C_d$ distance evaluations because each relevant grid cell contains at most one selected point. Rebuilding the grid takes $O(N)$ operations per level, which is absorbed by $O(M)$ since $N\le M$. The result follows by summing over the $K$ levels.
\end{proof}

For $t>0$, write $\log_{\beta}^+(t):=\max\left\{0,\log t/\log\beta\right\}$.

\begin{theorem}[End-to-end complexity of multiscale FPS]
\label{thm:multiscale-end-to-end}
Suppose that $(\Xcal,d,\mu)$ is an $s$-regular metric-measure space with $\Xcal\subset\RR^d$ and the Euclidean metric. Let $\beta>1$, $B>2\beta$, and $0<\delta<1$. If
\begin{equation}
M
\ge
\frac{
2^sc_+
}{
c_-\left(1-2\beta/B\right)^s
}
(N-1)L_{M,\delta},
\label{eq:multiscale-candidate-condition}
\end{equation}
then, with probability at least $1-\delta$,
\begin{equation}
\max_{2\le n\le N}
\rho_{\Xcal}(X_n)
\le B.
\label{eq:multiscale-target-mesh}
\end{equation}
On the same event, the number of levels satisfies
\begin{equation}
K
\le
1+
\left\lceil
\log_{\beta}^+
\left(
\frac{
BD_{\Xcal}
\bigl(c_+(N-1)\bigr)^{1/s}
}{
2\beta
}
\right)
\right\rceil,
\label{eq:multiscale-level-probabilistic}
\end{equation}
and, consequently, the grid implementation has cost $O(C_dMK)$.
\end{theorem}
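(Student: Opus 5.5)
The plan is to work throughout on the single event $E:=\{\varepsilon_M\le r_{M,\delta}\}$. By \eqref{eq:candidate-covering-high-probability} in \Cref{thm:s-regular-upper}, $\PP(E)\ge1-\delta$. First, condition \eqref{eq:multiscale-candidate-condition} is equivalent to $\tau_{N,M,\delta}\le 1-2\beta/B$. To see this, take $s$-th roots in \eqref{eq:def-tau}, exactly as in the proof of \Cref{cor:target-mesh-ratio} with $2/B$ replaced by $2\beta/B$. Since $B>2\beta$, this gives $\tau_{N,M,\delta}<1$. The proof of \Cref{thm:s-regular-upper} shows that on $E$ we have $\varepsilon_M/h_{N-1}^*(\Xcal)\le\tau_{N,M,\delta}<1$. \Cref{cor:multiscale-mesh-bound}, via \eqref{eq:multiscale-mesh-bound}, then yields
\[
\max_{2\le n\le N}\rho_{\Xcal}(X_n)\le\frac{2\beta}{1-\tau_{N,M,\delta}}\le B
\]
on $E$, which is \eqref{eq:multiscale-target-mesh}.

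For the level count I will use \eqref{eq:number-multiscale-levels} from \Cref{prop:multiscale-guarantee}, which applies on $E$ because $\varepsilon_M<h_{N-1}^*(\Xcal)$ there. The ratio inside the logarithm is bounded in two steps.

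\emph{Numerator.} We have $R_1\le D_{\Xcal}$; note that the quantity called $R_1$ in that proposition is $r_0=h_{\Ycal_M}(\{\bsx_1\})\le D_{\Xcal}$.

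\emph{Denominator.} On $E$,
\[
h_{N-1}^*(\Xcal)-\varepsilon_M\ge h_{N-1}^*(\Xcal)\,(1-\tau_{N,M,\delta})\ge h_{N-1}^*(\Xcal)\,\frac{2\beta}{B}.
\]
By \eqref{eq:optimal-fill-bounds}, $h_{N-1}^*(\Xcal)\ge(c_+(N-1))^{-1/s}$.

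Combining the two steps,
\[
\frac{R_1}{h_{N-1}^*(\Xcal)-\varepsilon_M}\le\frac{BD_{\Xcal}\bigl(c_+(N-1)\bigr)^{1/s}}{2\beta}.
\]
Monotonicity of $\log_\beta$ then gives \eqref{eq:multiscale-level-probabilistic}. Replacing $\log_\beta$ by $\log_\beta^+$ only enlarges the bound, so it is harmless. It also covers the degenerate case where the ratio is below $1$, in which case $K=1$ trivially satisfies the bound.

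Finally, the cost statement follows by inserting this bound on $K$ into \Cref{prop:grid-implementation}, which gives $O(C_dMK)$ on the same event $E$.

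The only delicate point is the denominator bound, since it needs the lower estimate $1-\tau_{N,M,\delta}\ge2\beta/B$. That estimate is exactly what \eqref{eq:multiscale-candidate-condition} encodes, and one must check that the mesh-ratio bound and the level bound are both derived on the single event $E$. Because of this, no union bound is needed and the failure probability stays $\delta$.
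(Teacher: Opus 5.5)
Your proposal is correct and follows the paper's proof essentially step for step. You reduce \eqref{eq:multiscale-candidate-condition} to $\tau_{N,M,\delta}\le 1-2\beta/B$ and apply \Cref{cor:multiscale-mesh-bound} on the event of \Cref{thm:s-regular-upper}; on that same event you bound $R_1\le D_{\Xcal}$ and $h_{N-1}^*(\Xcal)-\varepsilon_M\ge (2\beta/B)\bigl(c_+(N-1)\bigr)^{-1/s}$ before invoking \eqref{eq:number-multiscale-levels} and \Cref{prop:grid-implementation}. One harmless inaccuracy: the ratio $R_1/(h_{N-1}^*(\Xcal)-\varepsilon_M)$ is always at least $1$, as noted at the end of the proof of \Cref{prop:multiscale-guarantee}, so the degenerate case you mention never actually occurs.
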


\begin{proof}
The condition \eqref{eq:multiscale-candidate-condition} implies $\tau_{N,M,\delta} \le 1-2\beta/B$. Therefore, on the event of probability at least $1-\delta$ from \Cref{thm:s-regular-upper},
\[
\max_{2\le n\le N}
\rho_{\Xcal}(X_n)
\le
\frac{2\beta}{
1-\tau_{N,M,\delta}
}
\le B.
\]
On the same event,
\[
h_{N-1}^*(\Xcal)-\varepsilon_M
\ge
\left(
1-\tau_{N,M,\delta}
\right)
h_{N-1}^*(\Xcal)
\ge
\frac{2\beta}{B}
\bigl(c_+(N-1)\bigr)^{-1/s}.
\]
Since $R_1\le D_{\Xcal}$, the level bound follows from \eqref{eq:number-multiscale-levels}. The complexity estimate then follows from \Cref{prop:grid-implementation}.
\end{proof}

For a fixed approximation parameter and fixed dimension, the number of
levels is logarithmic in $N$. Hence, when $M\asymp N\log N$, the multiscale method has cost $O(N\log^2N)$. The following result makes explicit the dependence on the desired distance from the limiting constant $2$.

\begin{corollary}[Mesh ratio close to $2$]
\label{cor:multiscale-close-two}
Let $0<\eta\le1$, and set $\beta:=1+\eta/3$ and $B:=2(1+\eta)$. Suppose that the assumptions of \Cref{thm:multiscale-end-to-end} hold.
If
\begin{equation}
M
\ge
6^s
\frac{c_+}{c_-}
\eta^{-s}
(N-1)L_{M,\delta},
\label{eq:near-two-candidate-size}
\end{equation}
then
\begin{equation}
\PP\left[
\max_{2\le n\le N}
\rho_{\Xcal}(X_n)
\le
2(1+\eta)
\right]
\ge
1-\delta.
\label{eq:near-two-mesh-bound}
\end{equation}
On the same event, the grid implementation has cost $O\left( C_d\eta^{-1}M\log N \right)$. Consequently, choosing $M$ at the order prescribed by \eqref{eq:near-two-candidate-size} gives $O\left( C_d \eta^{-(s+1)} N L_{M,\delta} \log N \right)$ time.
\end{corollary}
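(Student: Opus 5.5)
The corollary is a specialization of \Cref{thm:multiscale-end-to-end} to $\beta=1+\eta/3$ and $B=2(1+\eta)$. First I check that these choices are admissible. Since $0<\eta\le1$, we have $\beta>1$. We also need $B>2\beta$, which reads $1+\eta>1+\eta/3$ and holds because $\eta>0$.

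The main computation verifies that \eqref{eq:near-two-candidate-size} implies \eqref{eq:multiscale-candidate-condition}. Here
\[
1-\frac{2\beta}{B}=1-\frac{1+\eta/3}{1+\eta}=\frac{2\eta/3}{1+\eta}\ge\frac{\eta}{3},
\]
where the last inequality uses $\eta\le1$, so that $1+\eta\le2$. Hence
\[
\frac{2^s c_+}{c_-(1-2\beta/B)^s}\le\frac{2^s c_+}{c_-}\Bigl(\frac{3}{\eta}\Bigr)^s=6^s\frac{c_+}{c_-}\eta^{-s}.
\]
So \eqref{eq:near-two-candidate-size} gives \eqref{eq:multiscale-candidate-condition}, and the theorem yields \eqref{eq:near-two-mesh-bound}.

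For the cost, I use \eqref{eq:multiscale-level-probabilistic} on the same event. The logarithm's argument is $(1+\eta)D_{\Xcal}(c_+(N-1))^{1/s}/(1+\eta/3)$. This is at most $2D_{\Xcal}(c_+N)^{1/s}$, whose logarithm is $O(\log N)$ with constants depending on $s$, $c_+$ and $D_{\Xcal}$. Dividing by $\log\beta=\log(1+\eta/3)\ge \eta/(3\cdot\tfrac43)=\eta/4$ (using $\log(1+x)\ge x/(1+x)$) gives $K=O(\eta^{-1}\log N)$. The additive constants are absorbed as well: for $N\ge2$ the term $1+\lceil\cdot\rceil$ is $O(\eta^{-1}\log N)$ because $\eta^{-1}\ge1$. \Cref{prop:grid-implementation} then gives cost $O(C_dMK)=O(C_d\eta^{-1}M\log N)$.

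The final claim comes from substituting $M\asymp 6^s(c_+/c_-)\eta^{-s}NL_{M,\delta}$ into this bound, which gives $O(C_d\eta^{-(s+1)}NL_{M,\delta}\log N)$. The main obstacle is bookkeeping rather than any new idea. One must make sure the $\log_\beta$ conversion yields a clean $\eta^{-1}$ factor uniformly for $\eta\in(0,1]$. One must also handle the $\log^+$ and ceiling for small $N$, which I do by absorbing constants using $N\ge2$.
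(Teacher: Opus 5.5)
Your proof is correct and follows the paper's argument. Your check that $1-2\beta/B\ge\eta/3$ (so that \eqref{eq:near-two-candidate-size} implies \eqref{eq:multiscale-candidate-condition}) is a rearrangement of the paper's two steps $\tau_{N,M,\delta}\le\eta/3$ and $(1+\eta/3)/(1-\eta/3)\le 1+\eta$, and your level count via $\log(1+\eta/3)\ge\eta/4$ and \Cref{prop:grid-implementation} is the same estimate, with your absorption of the ceiling and additive terms using $N\ge2$ making explicit what the paper leaves implicit.
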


\begin{proof}
The condition \eqref{eq:near-two-candidate-size} gives $\tau_{N,M,\delta} \le \eta/3$. For $0<\eta\le1$, it holds that 
\[
\frac{
1+\eta/3
}{
1-\eta/3
}
\le
1+\eta.
\]
Therefore, we have
\[
\frac{2\beta}{
1-\tau_{N,M,\delta}
}
\le
2(1+\eta).
\]
This proves the mesh-ratio bound. Moreover, since $\log\left(1+\eta/3\right) \gtrsim\eta$, while the logarithmic numerator in \eqref{eq:multiscale-level-probabilistic} is $O(\log N)$ for a fixed domain, we obtain $K=O(\eta^{-1}\log N)$. The complexity estimates follow from \Cref{prop:grid-implementation}.
\end{proof}

\begin{remark}[Relation to known greedy-permutation algorithms]
\label{rem:known-greedy-permutation-algorithms}
\Cref{alg:multiscale-fps} is intended as a simple implementation whose mesh-ratio and computational guarantees can be analyzed directly. It is closely related to standard multiscale net constructions. More sophisticated greedy-permutation algorithms are available.

Har-Peled and Mendel \cite{HM06} construct hierarchical nets and near-exact greedy orderings in expected near-linear time in bounded doubling dimension. A recent deterministic construction of \cite{CSS24} computes a $(1+1/M)$-approximate greedy permutation in $2^{O(d_{\mathrm{dbl}})}M\log M$ time, where $d_{\mathrm{dbl}}$ denotes the doubling dimension. For graph metrics and high-dimensional Euclidean point sets, randomized approximate greedy-permutation algorithms with different complexity bounds are given in \cite{EHS20}. Any such method can be combined with \Cref{thm:probabilistic-approximate-fps}, provided that its approximation guarantee is translated into the factors $\alpha_n$ used here.
\end{remark}

\section{Construction of a quasi-uniform nested sequence}
\label{sec:nested}

The constructions studied in \Cref{sec:exact,sec:approximate} start from a candidate set of prescribed size $M$ and return an $N$-point subset. If this procedure is repeated independently for different values of $N$, the resulting point sets need not be nested. In this section, we instead use a single infinite stream of iid candidates and an increasing candidate budget. This produces one infinite sequence $\bsx_1,\bsx_2,\ldots$ whose initial segments $X_n=\{\bsx_1,\ldots,\bsx_n\}$ are quasi-uniform almost surely.

The analysis differs slightly from that of \Cref{sec:exact}. As the candidate set changes with $n$, the insertion radii need not be nonincreasing. We thus interpret each candidate-based step as a relaxed continuous-domain greedy step and apply a deterministic relaxed-greedy argument.

\subsection{A single-stream randomized construction}
\label{subsec:single-stream}

Let $Y_1,Y_2,\ldots\overset{\mathrm{iid}}{\sim}\mu$ be a single infinite stream of candidate points. Let $0=M_0< M_1\le M_2\le\cdots$ be a nondecreasing sequence of positive integers satisfying
\begin{equation}
M_n\ge n+1,
\quad n\ge1,
\label{eq:nested-budget-basic}
\end{equation}
and define the candidate pool available at iteration $n$ by
\begin{equation}
\Ycal^{(n)}
:=
\Ycal_{M_n}
=
\{Y_1,\ldots,Y_{M_n}\}.
\label{eq:iteration-candidate-pool}
\end{equation}
The monotonicity of $M_n$ ensures that every previously selected point remains in all subsequent candidate pools.

Let $\alpha_n\in(0,1]$. Starting with $\bsx_1=Y_1$, we recursively choose $\bsx_{n+1}\in\Ycal^{(n)}$ such that
\begin{equation}
d(\bsx_{n+1},X_n)
\ge
\alpha_n
h_{\Ycal^{(n)}}(X_n).
\label{eq:nested-approximate-fps}
\end{equation}
Exact FPS corresponds to $\alpha_n=1$ for every $n$. \Cref{alg:nested-randomized-fps} describes the workflow of the FPS for constructing an infinite sequence $\bsx_1,\bsx_2,\ldots$.

\begin{algorithm}[t]
\caption{Nested randomized farthest-point sampling}
\label{alg:nested-randomized-fps}
\begin{algorithmic}[1]
\REQUIRE An iid stream $(Y_j)_{j\ge1}$, nondecreasing candidate budgets
$(M_n)_{n\ge1}$ satisfying $M_n\ge n+1$, and approximation factors
$(\alpha_n)_{n\ge1}\subset(0,1]$.
\ENSURE A nested sequence $X_1\subset X_2\subset\cdots$.
\STATE Set $\bsx_1\leftarrow Y_1$, $X_1\leftarrow\{\bsx_1\}$ and $\Ycal^{(0)}\leftarrow \emptyset$.
\FOR{$n=1,2,\ldots$}
\STATE Set
$\Ycal^{(n)}\leftarrow \Ycal^{(n-1)}\cup \{Y_{M_{n-1}+1},\ldots,Y_{M_n}\}$.
\STATE Choose
$\bsx_{n+1}\in\Ycal^{(n)}\setminus X_n$ such that
$d(\bsx_{n+1},X_n)
\ge
\alpha_n h_{\Ycal^{(n)}}(X_n)$.
\STATE Set $X_{n+1}\leftarrow X_n\cup \{\bsx_{n+1}\}$.
\ENDFOR
\end{algorithmic}
\end{algorithm}

Since $\mu$ is non-atomic, the candidate points are pairwise distinct with probability ~$1$. On this event, \eqref{eq:nested-budget-basic} implies that $\Ycal^{(n)}\setminus X_n$ is nonempty. Moreover, $h_{\Ycal^{(n)}}(X_n)>0$, so \eqref{eq:nested-approximate-fps} always selects a new point. Define the covering error of the candidate pool available at step $n$ by
\begin{equation}
\varepsilon_n
:=
h_{\Xcal}(\Ycal^{(n)})
=
h_{\Xcal}(\Ycal_{M_n}).
\label{eq:nested-candidate-error}
\end{equation}
The discretization inequality \eqref{eq:discretization-inequality} gives $h_{\Ycal^{(n)}}(X_n) \ge h_{\Xcal}(X_n)-\varepsilon_n$. Consequently,
\begin{equation}
d(\bsx_{n+1},X_n)
\ge
\alpha_n
\left(
h_{\Xcal}(X_n)-\varepsilon_n
\right)
=
\alpha_n
\left(
1-\frac{\varepsilon_n}{h_{\Xcal}(X_n)}
\right)
h_{\Xcal}(X_n).
\label{eq:nested-relaxed-greedy}
\end{equation}
Thus the quality of the continuous-domain greedy step is controlled by both the FPS approximation factor $\alpha_n$ and the relative candidate covering error.

We first recall the deterministic principle needed below.

\begin{proposition}[Asymptotic relaxed-greedy criterion]
\label{prop:asymptotic-relaxed-greedy}
Suppose that $(\Xcal,d,\mu)$ is $s$-regular and that $(\bsx_n)_{n\ge1}$ is a sequence of distinct points in $\Xcal$. Let $X_n=\{\bsx_1,\ldots,\bsx_n\}$ and define the effective continuous-domain relaxation factor by
\begin{equation}
\gamma_n
:=
\frac{
d(\bsx_{n+1},X_n)
}{
h_{\Xcal}(X_n)
},
\quad n\ge1.
\label{eq:effective-relaxation-factor}
\end{equation}
Then $\gamma_n\in(0,1]$. If $\underline{\gamma}:= \inf_{n\ge1}\gamma_n>0$, then
\begin{equation}
q(X_n)
\ge
\frac{\underline{\gamma}}{2}
h_{\Xcal}(X_{n-1}) \quad \text{and}\quad \rho_{\Xcal}(X_n)
\le
\frac{2}{\underline{\gamma}},
\quad n\ge2.
\label{eq:uniform-relaxed-separation}
\end{equation}
More generally, if $\gamma_\infty := \liminf_{n\to\infty}\gamma_n>0$, then $(\bsx_n)_{n\ge1}$ is quasi-uniform and
\begin{equation}
\limsup_{n\to\infty}
\rho_{\Xcal}(X_n)
\le
\frac{2}{\gamma_\infty}.
\label{eq:asymptotic-relaxed-mesh}
\end{equation}
\end{proposition}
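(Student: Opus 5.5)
The plan is to work purely deterministically with the identity $2q(X_n)=\min_{1\le j<n}d(\bsx_{j+1},X_j)$ from \Cref{subsec:fps}, which holds for any ordered set of distinct points, together with the monotonicity of $h_{\Xcal}(X_n)$ in $n$.

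\textbf{Range of $\gamma_n$.} First I check that $\gamma_n\in(0,1]$.
\begin{itemize}
\item The numerator is positive because the points are distinct.
\item The denominator is positive: $s$-regularity makes $\mu$ non-atomic, so $\Xcal$ is infinite and hence $h_{\Xcal}(X_n)>0$.
\item $\gamma_n\le1$ holds because $d(\bsx_{n+1},X_n)\le\sup_{\bsx\in\Xcal}d(\bsx,X_n)$.
\end{itemize}

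\textbf{Uniform case.} Suppose $\underline{\gamma}>0$. For every $j<n$,
\[
d(\bsx_{j+1},X_j)=\gamma_j h_{\Xcal}(X_j)\ge\underline{\gamma}\,h_{\Xcal}(X_j)\ge\underline{\gamma}\,h_{\Xcal}(X_{n-1}),
\]
where the last step uses that $h_{\Xcal}(X_j)$ is nonincreasing in $j$. Taking the minimum over $j<n$ gives $q(X_n)\ge\frac{\underline{\gamma}}{2}h_{\Xcal}(X_{n-1})$. Combining this with $h_{\Xcal}(X_n)\le h_{\Xcal}(X_{n-1})$ yields $\rho_{\Xcal}(X_n)\le 2/\underline{\gamma}$.

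\textbf{Asymptotic case.} Now assume only $\gamma_\infty>0$. Fix $0<\epsilon<\gamma_\infty$ and choose $n_0$ such that $\gamma_j\ge\gamma_\infty-\epsilon$ for all $j\ge n_0$. Split the minimum defining $2q(X_n)$ into two parts.
\begin{itemize}
\item The indices $j<n_0$ contribute at least the fixed constant $c_0:=\min_{j<n_0}d(\bsx_{j+1},X_j)>0$.
\item The indices $n_0\le j<n$ contribute at least $(\gamma_\infty-\epsilon)h_{\Xcal}(X_{n-1})$, by the same monotonicity argument as in the uniform case.
\end{itemize}
Hence $2q(X_n)\ge\min\{c_0,(\gamma_\infty-\epsilon)h_{\Xcal}(X_{n-1})\}$. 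Once $h_{\Xcal}(X_{n-1})$ is small enough, the second term is the minimum, and then $\rho_{\Xcal}(X_n)\le 2/(\gamma_\infty-\epsilon)$. Letting $\epsilon\downarrow0$ gives \eqref{eq:asymptotic-relaxed-mesh}. Quasi-uniformity then follows, because each individual $\rho_{\Xcal}(X_n)$ is finite (the points are distinct and $\Xcal$ is bounded) and the limit superior is finite.

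\textbf{Main obstacle.} The step that needs a genuine argument is showing $h_{\Xcal}(X_n)\to0$. I would argue by contradiction. The sequence $h_{\Xcal}(X_n)$ is nonincreasing, so otherwise it stays above some $\eta>0$. Then for $j\ge n_0$ we have $d(\bsx_{j+1},X_j)\ge(\gamma_\infty-\epsilon)\eta$. Consequently the infinitely many points $\bsx_j$, $j>n_0$, are pairwise separated by a fixed positive distance. This contradicts compactness of $\Xcal$, or alternatively the packing bound \eqref{eq:separation-upper-bound}, which forces $q\to0$ as the number of points grows.
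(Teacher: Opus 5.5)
Your proof is correct. The uniform case is the paper's induction written in unrolled form, but the asymptotic part takes a genuinely different route.

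\textbf{The paper's route.} It gets quasi-uniformity for free by noting that $\gamma_n>0$ for all $n$ and $\liminf_n\gamma_n>0$ force $\inf_n\gamma_n>0$, so the uniform case applies. For the sharper bound it never proves $h_{\Xcal}(X_n)\to0$. Instead it argues that some stage $n_1\ge n_0$ must satisfy $q(X_{n_1})>\frac{\gamma_\infty-\epsilon}{2}h_{\Xcal}(X_{n_1})$. Otherwise the recursion $q(X_{n+1})=\min\{q(X_n),\frac12 d(\bsx_{n+1},X_n)\}$ would freeze $q(X_n)$ at a positive value, contradicting $q(X_n)\le(c_-n)^{-1/s}$. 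The paper then propagates this inequality inductively from $n_1$.

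\textbf{Your route.} You isolate the finitely many early steps as a fixed constant $c_0>0$ and reduce everything to $h_{\Xcal}(X_n)\to0$, which you prove by a separated-points compactness argument. Your version shows transparently why the early, poorly relaxed insertions become irrelevant, at the price of establishing $h_{\Xcal}(X_n)\to0$ separately. The paper's version obtains the needed smallness directly from the packing bound on $q(X_n)$.

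\textbf{Two small simplifications.} First, with $\underline{\gamma}=\inf_n\gamma_n>0$ in hand, the uniform case already gives $h_{\Xcal}(X_{n-1})\le\frac{2}{\underline{\gamma}}q(X_n)\le\frac{2}{\underline{\gamma}}(c_-n)^{-1/s}\to0$. This disposes of your main obstacle in one line. Second, positivity of the denominator of $\gamma_n$ follows from $h_{\Xcal}(X_n)\ge d(\bsx_{n+1},X_n)>0$, without invoking non-atomicity.
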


\begin{proof}
Put $h_n:=h_{\Xcal}(X_n)$ and $q_n:=q(X_n)$. For $n\ge1$, it holds that
\[
q_{n+1}
=
\min\left\{
q_n,
\frac12d(\bsx_{n+1},X_n)
\right\},
\]
with the first term omitted when $n=1$. If $\gamma_n\ge\underline{\gamma}$ for all $n$, then $q_2 \ge \underline{\gamma}h_1/2$. Assume inductively that $q_n \ge \underline{\gamma}h_{n-1}/2$. Since $h_n\le h_{n-1}$, we have
\[
q_{n+1}
\ge
\min\left\{
\frac{\underline{\gamma}}2h_{n-1},
\frac{\underline{\gamma}}2h_n
\right\}
=
\frac{\underline{\gamma}}2h_n,
\]
and so
\[
\rho_{\Xcal}(X_n)
=
\frac{h_n}{q_n}
\le
\frac{h_{n-1}}{q_n}
\le
\frac{2}{\underline{\gamma}}.
\]
This proves \eqref{eq:uniform-relaxed-separation}. 

Suppose now that $\gamma_\infty>0$. Since each $\gamma_n$ is positive, this already implies $\inf_{n\ge1}\gamma_n>0$, and hence the sequence is quasi-uniform by the first part. It remains to prove the sharper asymptotic estimate. Fix $0<\eta<\gamma_\infty$. There exists $n_0$ such that $\gamma_n\ge\gamma_\infty-\eta$ for every $n\ge n_0$. We claim that there exists $n_1\ge n_0$ such that
\begin{equation}
q_{n_1}
>
\frac{\gamma_\infty-\eta}{2}h_{n_1}.
\label{eq:strict-relaxed-stage}
\end{equation}
Otherwise,
\[
q_n
\le
\frac{\gamma_\infty-\eta}{2}h_n
\quad
\text{for every $n\ge n_0$}.
\]
It would then follow that
\[
q_{n+1}
=
\min\left\{
q_n,
\frac12d(\bsx_{n+1},X_n)
\right\}
\ge
\min\left\{
q_n,
\frac{\gamma_\infty-\eta}{2}h_n
\right\}
=
q_n.
\]
Since adding a point cannot increase the separation radius, this would imply $q_{n+1}=q_n$ for every $n\ge n_0$. This is impossible because
\eqref{eq:separation-upper-bound} gives $q_n\le(c_-n)^{-1/s}\to 0$. Thus \eqref{eq:strict-relaxed-stage} holds.

Using \eqref{eq:strict-relaxed-stage} and arguing inductively, we obtain
\[
q_{n+1}
\ge
\frac{\gamma_\infty-\eta}{2}h_n
\quad
\text{for every $n\ge n_1$}.
\]
It follows that
\[
\rho_{\Xcal}(X_{n+1})
\le
\frac{2}{\gamma_\infty-\eta}.
\]
Letting $\eta\downarrow0$ proves \eqref{eq:asymptotic-relaxed-mesh}.
\end{proof}

\begin{remark}
\label{rem:relation-relaxed-pz}
\Cref{prop:asymptotic-relaxed-greedy} is the metric-measure analogue of
the relaxed greedy-packing results in \cite[Theorems~3.6 and~3.7]{PZ23}. We have included the proof because the relaxation factors arising from \eqref{eq:nested-relaxed-greedy} are random and depend on the increasing candidate pools.
\end{remark}

\subsection{Almost-sure control of the candidate covering errors}
\label{subsec:nested-covering-errors}

Let $(\delta_n)_{n\ge1}$ be any sequence satisfying
\begin{equation}
0<\delta_n<1,
\quad
\sum_{n=1}^{\infty}\delta_n<\infty.
\label{eq:summable-failure-probabilities}
\end{equation}
Define
\begin{equation}
L_n
:=
L_{M_n,\delta_n}
=
\max\left\{
1,
\log\left(\frac{2^sM_n}{\delta_n}\right)
\right\}\quad \text{and}\quad 
r_n
:=
2\left(
\frac{L_n}{c_-M_n}
\right)^{1/s}.
\label{eq:nested-Ln}
\end{equation}
The corresponding deterministic upper bound for the relative candidate covering error is
\begin{equation}
\vartheta_n
:=
2\left(
\frac{
c_+nL_n
}{
c_-M_n
}
\right)^{1/s}.
\label{eq:nested-relative-error}
\end{equation}

\begin{lemma}[Almost-sure candidate coverage]
\label{lem:nested-almost-sure-coverage}
Suppose that $(\Xcal,d,\mu)$ is $s$-regular. Then, with probability~$1$, there exists a finite random index $n_0$ such that $\varepsilon_n\le r_n$ and $\varepsilon_n/h_n^*(\Xcal) \le \vartheta_n$ for every $n\ge n_0$.
\end{lemma}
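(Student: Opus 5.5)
The plan is to combine the single-$M$ covering estimate \eqref{eq:candidate-covering-high-probability} from \Cref{thm:s-regular-upper} with the Borel--Cantelli lemma.

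\textbf{Step 1: per-index bound.} For each $n\ge1$, the pool $\Ycal^{(n)}=\{Y_1,\ldots,Y_{M_n}\}$ consists of $M_n$ iid draws from $\mu$. Hence $\varepsilon_n=h_{\Xcal}(\Ycal_{M_n})$ has exactly the distribution of $\varepsilon_M$ with $M=M_n$. Applying \eqref{eq:candidate-covering-high-probability} with $M=M_n$ and $\delta=\delta_n$ gives
\[
\PP(\varepsilon_n>r_n)\le\delta_n,
\]
where $r_n=r_{M_n,\delta_n}$ by \eqref{eq:nested-Ln}. The proof of that estimate used only the marginal law of the first $M$ points. So the dependence between different $n$, which comes from the shared stream, is irrelevant at this stage.

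\textbf{Step 2: Borel--Cantelli.} Since $\sum_n\delta_n<\infty$ by \eqref{eq:summable-failure-probabilities}, the first Borel--Cantelli lemma applies; it requires no independence. It yields $\PP(\varepsilon_n>r_n\text{ infinitely often})=0$. Therefore, almost surely there is a finite random $n_0$ with $\varepsilon_n\le r_n$ for all $n\ge n_0$.

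\textbf{Step 3: relative error.} This step is deterministic. By the lower bound in \eqref{eq:optimal-fill-bounds}, $h_n^*(\Xcal)\ge(c_+n)^{-1/s}$. It follows that
\[
\frac{r_n}{h_n^*(\Xcal)}\le 2\left(\frac{L_n}{c_-M_n}\right)^{1/s}(c_+n)^{1/s}=\vartheta_n.
\]
So on the same event, $\varepsilon_n/h_n^*(\Xcal)\le\vartheta_n$ for all $n\ge n_0$.

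There is no real obstacle here. The only point to check is that \Cref{thm:s-regular-upper} applies for every $M_n$, including the trivial case $r_{M,\delta}\ge D_{\Xcal}$, which that theorem already handles. Once that is confirmed, one simply notes that Borel--Cantelli needs only summability and not independence across $n$.
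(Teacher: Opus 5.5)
Your proposal is correct and follows the paper's proof step for step. You obtain the per-index bound $\PP(\varepsilon_n>r_n)\le\delta_n$ from \eqref{eq:candidate-covering-high-probability}, apply the first Borel--Cantelli lemma (which, as you note, requires no independence across $n$), and then use the deterministic comparison $h_n^*(\Xcal)\ge(c_+n)^{-1/s}$ from \eqref{eq:optimal-fill-bounds} to get $r_n/h_n^*(\Xcal)\le\vartheta_n$.
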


\begin{proof}
By \eqref{eq:candidate-covering-high-probability}, $\PP(\varepsilon_n>r_n)\le\delta_n$. Since the failure probabilities are summable, the first Borel--Cantelli
lemma implies that $\varepsilon_n\le r_n$ for all sufficiently large $n$, almost surely. No independence among these
events is required. Furthermore, \eqref{eq:optimal-fill-bounds} gives $h_n^*(\Xcal) \ge (c_+n)^{-1/s}$. Therefore,
\[
\frac{r_n}{h_n^*(\Xcal)}
\le
2\left(
\frac{
c_+nL_n
}{
c_-M_n
}
\right)^{1/s}
=
\vartheta_n.
\]
This completes the proof.
\end{proof}

We now obtain the principal almost-sure result for the nested construction.

\begin{theorem}[Almost-sure nested quasi-uniformity]
\label{thm:nested-quasi-uniform}
Suppose that $(\Xcal,d,\mu)$ is $s$-regular, let $(\delta_n)_{n\ge1}$ satisfy \eqref{eq:summable-failure-probabilities}, and define $\vartheta_n$ by \eqref{eq:nested-relative-error}. Assume that
\[
    a_\star
    :=
    \liminf_{n\to\infty}
    \alpha_n(1-\vartheta_n)
    >0.
\]
Then \Cref{alg:nested-randomized-fps} generates a quasi-uniform
sequence almost surely. More precisely,
\begin{equation}
\limsup_{n\to\infty}
\rho_{\Xcal}(X_n)
\le
\frac{2}{a_\star}
\quad
\text{almost surely}.
\label{eq:nested-general-limsup}
\end{equation}
Consequently, almost surely, $h_{\Xcal}(X_n)\asymp n^{-1/s}$ and $q(X_n)\asymp n^{-1/s}$.
\end{theorem}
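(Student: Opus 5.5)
The plan is to reduce the statement to \Cref{prop:asymptotic-relaxed-greedy}. That requires showing that, almost surely, the effective relaxation factors $\gamma_n=d(\bsx_{n+1},X_n)/h_{\Xcal}(X_n)$ satisfy $\liminf_n\gamma_n\ge a_\star$. We work on the probability-one event on which three things hold: the candidates are pairwise distinct, so the construction is well defined and produces distinct points, $\gamma_n>0$, and $\gamma_n\le 1$ since $\bsx_{n+1}\in\Xcal$; and the conclusion of \Cref{lem:nested-almost-sure-coverage} holds, with its random index $n_0$.

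For $n\ge n_0$ we combine \eqref{eq:nested-relaxed-greedy} with the bound $h_{\Xcal}(X_n)\ge h_n^*(\Xcal)$. This gives $\varepsilon_n/h_{\Xcal}(X_n)\le \varepsilon_n/h_n^*(\Xcal)\le\vartheta_n$, and therefore
\[
\gamma_n\ge \alpha_n\left(1-\frac{\varepsilon_n}{h_{\Xcal}(X_n)}\right)\ge \alpha_n(1-\vartheta_n).
\]
Taking the liminf yields $\gamma_\infty:=\liminf_n\gamma_n\ge a_\star>0$. The second part of \Cref{prop:asymptotic-relaxed-greedy} then gives quasi-uniformity and $\limsup_n\rho_{\Xcal}(X_n)\le 2/\gamma_\infty\le 2/a_\star$, which is \eqref{eq:nested-general-limsup}. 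One point to check here is that \eqref{eq:nested-relaxed-greedy} is valid even when $\vartheta_n\ge1$ for small $n$. This causes no trouble, because the proposition only uses the positivity of each $\gamma_n$ together with the liminf, and positivity comes from the construction itself.

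The final assertion follows from \Cref{cor:quasi-uniform-scales}. Almost surely $\sup_{n\ge2}\rho_{\Xcal}(X_n)<\infty$. Combining this with $h_{\Xcal}(X_n)\ge h_n^*(\Xcal)\ge (c_+n)^{-1/s}$ and $q(X_n)\le (c_-n)^{-1/s}$ from \Cref{prop:natural-scale} gives
\[
h_{\Xcal}(X_n)\le \rho_{\Xcal}(X_n)\,q(X_n)\lesssim n^{-1/s}
\quad\text{and}\quad
q(X_n)=\frac{h_{\Xcal}(X_n)}{\rho_{\Xcal}(X_n)}\gtrsim n^{-1/s},
\]
with random but almost surely finite constants.

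There is no real obstacle here. The step needing the most care is the almost-sure bookkeeping: the Borel--Cantelli index $n_0$ is random, and the bound on $\gamma_n$ is only asymptotic. That is why we must invoke the liminf version of \Cref{prop:asymptotic-relaxed-greedy} rather than the uniform one.
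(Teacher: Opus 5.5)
Your proposal is correct and follows the paper's proof essentially step for step. On the probability-one event of \Cref{lem:nested-almost-sure-coverage}, you use $h_{\Xcal}(X_n)\ge h_n^*(\Xcal)$ to get $\varepsilon_n/h_{\Xcal}(X_n)\le\vartheta_n$, feed this into \eqref{eq:nested-relaxed-greedy} to obtain $\liminf_n\gamma_n\ge a_\star$, and conclude via the liminf part of \Cref{prop:asymptotic-relaxed-greedy} and \Cref{cor:quasi-uniform-scales}; your extra bookkeeping (the distinctness event and the harmless case $\vartheta_n\ge1$) only makes explicit what the paper leaves implicit.
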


\begin{proof}
We work on the probability-one event of \Cref{lem:nested-almost-sure-coverage}. For all sufficiently large $n$,
\[
\frac{\varepsilon_n}{h_n^*(\Xcal)}
\le
\vartheta_n<1.
\]
Since $h_{\Xcal}(X_n)\ge h_n^*(\Xcal)$, we have
\[
\frac{\varepsilon_n}{h_{\Xcal}(X_n)}
\le
\frac{\varepsilon_n}{h_n^*(\Xcal)}
\le
\vartheta_n.
\]
It follows from \eqref{eq:nested-relaxed-greedy} that
\[
d(\bsx_{n+1},X_n)
\ge
\alpha_n
\left(
1-
\frac{\varepsilon_n}{h_{\Xcal}(X_n)}
\right)
h_{\Xcal}(X_n)
\ge
\alpha_n(1-\vartheta_n)
h_{\Xcal}(X_n).
\]
Thus the effective factors in \eqref{eq:effective-relaxation-factor} satisfy $\liminf_{n\to\infty}\gamma_n \ge a_\star$. The mesh-ratio estimate follows from \Cref{prop:asymptotic-relaxed-greedy}. The optimal orders of $h_{\Xcal}(X_n)$ and $q(X_n)$ follow from \Cref{cor:quasi-uniform-scales}.
\end{proof}

\begin{remark}[Finite initial segments]
\label{rem:nested-finite-initial-segments}
The bound in \eqref{eq:nested-general-limsup} is asymptotic. A small candidate pool at one of the first few iterations may produce a larger mesh ratio. This does not affect quasi-uniformity: almost surely every finite initial mesh ratio is finite, and \eqref{eq:nested-general-limsup} supplies a deterministic asymptotic bound.
\end{remark}

\subsection{Sharp candidate budgets}
\label{subsec:nested-candidate-budgets}

We first show that the logarithmic oversampling factor remains necessary for a nested construction. This follows immediately from the lower bound for arbitrary candidate thinning.

\begin{proposition}[Subcritical nested budgets]
\label{prop:nested-subcritical}
Suppose that $(\Xcal,d,\mu)$ is $s$-regular and that $M_n=o(n\log n)$. Then every construction satisfying $X_{n+1}\subseteq\Ycal_{M_n}$ with $|X_{n+1}|=n+1$ obeys
\begin{equation}
\rho_{\Xcal}(X_{n+1})
\to \infty
\quad
\text{in probability}.
\label{eq:nested-subcritical-divergence}
\end{equation}
In particular, the probability that the resulting infinite sequence is quasi-uniform is $0$.
\end{proposition}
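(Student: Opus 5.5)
The plan is to apply \Cref{cor:necessity-logarithmic-oversampling} (or directly \Cref{thm:arbitrary-thinning-lower}) with output size $N=n+1$ and candidate pool size $M=M_n$. The constraint $M_n\ge n+1$ from \eqref{eq:nested-budget-basic} gives $N\le M$. Moreover, $M_n=o(n\log n)$ implies $M_n=o((n+1)\log(n+1))$. Since $X_{n+1}$ is an $(n+1)$-point subset of $\Ycal_{M_n}=\{Y_1,\ldots,Y_{M_n}\}$, which is an iid sample of size $M_n$, we get
\[
\rho_{\Xcal}(X_{n+1})\ge\inf_{Z\subseteq\Ycal_{M_n},\,|Z|=n+1}\rho_{\Xcal}(Z),
\]
and the right-hand side diverges in probability. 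This holds regardless of how the selection depends on the stream, because the infimum is taken over all subsets.

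The one point needing care is that the corollary is stated for $M_N\to\infty$ along $N\to\infty$, and \Cref{thm:arbitrary-thinning-lower} requires ``sufficiently large $M$''. Here $M_n\ge n+1\to\infty$, so this is fine. Concretely, I would fix $\theta\in(0,1)$. For large $n$, with probability at least $1-2^{s-1}\theta\log M_n/M_n^{1-\theta}\to1$, we then have
\[
\rho_{\Xcal}(X_{n+1})\ge\left(\frac{\theta c_-(n+1)\log M_n}{2c_+M_n}\right)^{1/s}.
\]
Using $\log M_n\ge\log(n+1)$, this lower bound tends to infinity.

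For the final claim, quasi-uniformity means $\sup_n\rho_{\Xcal}(X_n)<\infty$. Write $Q=\{\sup_{n}\rho_{\Xcal}(X_{n})<\infty\}=\bigcup_{K\in\NN}Q_K$, where $Q_K=\{\sup_n\rho_{\Xcal}(X_n)\le K\}$. For each fixed $K$, we have $\PP(Q_K)\le\PP(\rho_{\Xcal}(X_{n+1})\le K)$ for every $n$, and this tends to $0$ by the divergence in probability. Hence $\PP(Q_K)=0$, and by countable subadditivity $\PP(Q)=0$.

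I expect no real obstacle. The only subtlety is this last step: divergence in probability alone does not give almost-sure divergence, but the monotone decomposition over $K$ is enough to conclude.
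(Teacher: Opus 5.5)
Your proposal is correct and follows the paper's proof. You apply \Cref{cor:necessity-logarithmic-oversampling} with $N=n+1$ and $M=M_n$, which is uniform over selection rules because the infimum runs over all $(n+1)$-subsets, and you then use $\PP(Q_K)\le\PP(\rho_{\Xcal}(X_{n+1})\le K)\to0$ with a countable union over $K$; your explicit reindexing ($M_n\ge n+1\to\infty$ and $M_n=o((n+1)\log(n+1))$) only spells out what the paper leaves implicit.
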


\begin{proof}
Since $X_{n+1}$ is an $(n+1)$-point subset of $\Ycal_{M_n}$, \Cref{cor:necessity-logarithmic-oversampling} gives
\[
\rho_{\Xcal}(X_{n+1})
\to \infty
\quad
\text{in probability}.
\]
This conclusion is uniform over all subset-selection rules, including randomized rules and rules depending on the previously selected points.

For $K\in\NN$, define
\[
A_K
:=
\left\{
\sup_{n\ge2}
\rho_{\Xcal}(X_n)
\le K
\right\}.
\]
For every $n$, $A_K \subseteq \left\{ \rho_{\Xcal}(X_{n+1})\le K \right\}$, and therefore
\[
\PP(A_K)
\le
\liminf_{n\to\infty}
\PP\left(
\rho_{\Xcal}(X_{n+1})\le K
\right)
=
0.
\]
Taking the countable union over $K$ proves the final assertion.
\end{proof}

We next show that a sufficiently large multiple of $n\log n$ is enough. The explicit constant below is not intended to be sharp; it results from the elementary covering estimate and the choice of summable failure probabilities.

\begin{corollary}[Critical candidate budget]
\label{cor:nested-critical-budget}
Suppose that $(\Xcal,d,\mu)$ is $s$-regular. Fix $p>1$ and set $\delta_n:=(n+1)^{-p}$. Let
\begin{equation}
M_n
:=
\max\left\{
n+1,
\left\lceil
Cn\log(n+1)
\right\rceil
\right\},
\label{eq:critical-nested-budget}
\end{equation}
where $C>0$, and define
\begin{equation}
\vartheta_C
:=
2\left(
\frac{
(p+1)c_+
}{
Cc_-
}
\right)^{1/s}.
\label{eq:critical-nested-theta}
\end{equation}
If $\vartheta_C<1$ and $a := \liminf_{n\to\infty}\alpha_n>0$, then \Cref{alg:nested-randomized-fps} generates a quasi-uniform
sequence almost surely and
\begin{equation}
\limsup_{n\to\infty}
\rho_{\Xcal}(X_n)
\le
\frac{2}{
a(1-\vartheta_C)
}
\quad
\text{almost surely}.
\label{eq:critical-nested-mesh-bound}
\end{equation}
\end{corollary}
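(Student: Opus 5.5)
The plan is to reduce the statement to \Cref{thm:nested-quasi-uniform}. For that, it suffices to check three things: the budgets \eqref{eq:critical-nested-budget} are admissible, the failure probabilities $\delta_n=(n+1)^{-p}$ are summable, and $\limsup_{n\to\infty}\vartheta_n\le\vartheta_C$. Then
\[
a_\star=\liminf_{n\to\infty}\alpha_n(1-\vartheta_n)\ge a(1-\vartheta_C)>0,
\]
and \eqref{eq:nested-general-limsup} gives \eqref{eq:critical-nested-mesh-bound} directly. Quasi-uniformity is already part of the conclusion of \Cref{thm:nested-quasi-uniform}.

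\textbf{Admissibility.} By construction, $M_n\ge n+1$. The sequence $n\mapsto\lceil Cn\log(n+1)\rceil$ is nondecreasing, and so is $n+1$. Their maximum is therefore nondecreasing, and \eqref{eq:nested-budget-basic} holds. The summability condition \eqref{eq:summable-failure-probabilities} holds because $p>1$.

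\textbf{Asymptotics of $\vartheta_n$.} This is the only real computation. First,
\[
L_n=\max\{1,\log(2^sM_n)+p\log(n+1)\}.
\]
For large $n$, we have $M_n=\lceil Cn\log(n+1)\rceil$, so $\log M_n=\log n+O(\log\log n)$. Hence
\[
L_n=(p+1)\log n+O(\log\log n)=(1+o(1))(p+1)\log n .
\]
Also $M_n\ge Cn\log(n+1)$. Therefore
\[
\frac{nL_n}{M_n}\le\frac{(1+o(1))(p+1)\log n}{C\log(n+1)}\to\frac{p+1}{C},
\]
and substituting into \eqref{eq:nested-relative-error} gives $\limsup_n\vartheta_n\le 2\bigl((p+1)c_+/(Cc_-)\bigr)^{1/s}=\vartheta_C$.

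\textbf{Conclusion.} Since $\vartheta_C<1$, the product satisfies $\liminf_n\alpha_n(1-\vartheta_n)\ge(\liminf_n\alpha_n)(1-\limsup_n\vartheta_n)$; this uses that both factors are eventually nonnegative. It follows that $a_\star\ge a(1-\vartheta_C)>0$, and \Cref{thm:nested-quasi-uniform} yields the claim.

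The main obstacle is only bookkeeping: making sure the $o(1)$ correction in $L_n$ does not spoil the limsup bound. This is harmless because \Cref{thm:nested-quasi-uniform} uses only a liminf condition, so the bound on $\vartheta_n$ is needed only asymptotically.
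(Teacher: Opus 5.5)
Your proposal is correct and takes essentially the same approach as the paper: verify the admissibility and summability hypotheses, use $L_n=(p+1+o(1))\log n$ together with $M_n\ge Cn\log(n+1)$ to control $\vartheta_n$, and invoke \Cref{thm:nested-quasi-uniform}. The paper asserts the limit $\vartheta_n\to\vartheta_C$ directly, whereas you prove only the needed bound $\limsup_n\vartheta_n\le\vartheta_C$ and also spell out the monotonicity of $M_n$ and the liminf-of-products step.
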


\begin{proof}
For this choice of $\delta_n$ and $M_n$, we have $L_n=(p+1+o(1))\log n$ and $M_n=(C+o(1))n\log n$, and therefore $\vartheta_n\to\vartheta_C$. The result follows from \Cref{thm:nested-quasi-uniform}.
\end{proof}

Combining \Cref{prop:nested-subcritical} and \Cref{cor:nested-critical-budget} shows $M_n\asymp n\log n$ is the sharp candidate-budget order for constructing a quasi-uniform nested sequence from an iid stream. The constant multiplying $n\log n$ must be sufficiently large for the upper result, whereas every subcritical budget fails almost surely.

A supercritical candidate budget recovers the asymptotically optimal uniformity constant.

\begin{theorem}[Supercritical nested construction]
\label{thm:nested-supercritical}
Suppose that $(\Xcal,d,\mu)$ is $s$-regular and that
\begin{equation}
\frac{M_n}{
n\log(n+1)
}
\to \infty.
\label{eq:nested-supercritical-budget}
\end{equation}
If $a := \liminf_{n\to\infty}\alpha_n>0$, then \Cref{alg:nested-randomized-fps} generates a quasi-uniform
sequence almost surely and
\begin{equation}
\limsup_{n\to\infty}
\rho_{\Xcal}(X_n)
\le
\frac{2}{a}
\quad
\text{almost surely}.
\label{eq:nested-supercritical-bound}
\end{equation}
In particular, if $\alpha_n\to 1$, then
\begin{equation}
\limsup_{n\to\infty}
\rho_{\Xcal}(X_n)
\le2
\quad
\text{almost surely}.
\label{eq:nested-limsup-at-most-two}
\end{equation}
\end{theorem}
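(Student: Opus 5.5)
The plan is to reduce this statement to \Cref{thm:nested-quasi-uniform}. That theorem yields the bound $2/a_\star$ with $a_\star=\liminf_n\alpha_n(1-\vartheta_n)$, so it suffices to choose a summable sequence $(\delta_n)$ for which $\vartheta_n\to0$. Once this holds, $a_\star=\liminf_n\alpha_n=a>0$. The theorem then gives almost-sure quasi-uniformity together with \eqref{eq:nested-supercritical-bound}. When $\alpha_n\to1$ we have $a=1$, which gives \eqref{eq:nested-limsup-at-most-two}.

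For the choice of failure probabilities I take $\delta_n:=(n+1)^{-2}$, which is summable and lies in $(0,1)$. Then
\[
L_n=\max\left\{1,\log\left(2^sM_n(n+1)^2\right)\right\}\le C_s+\log M_n+2\log(n+1),
\]
where $C_s$ depends only on $s$. The term $\log M_n$ must be controlled relative to $M_n/n$, and this is the only point needing care, since the supercritical condition \eqref{eq:nested-supercritical-budget} involves $\log(n+1)$ rather than $\log M_n$.

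Write $M_n=\omega_n n\log(n+1)$ with $\omega_n\to\infty$. Then
\[
\frac{n\log M_n}{M_n}=\frac{\log\omega_n+\log n+\log\log(n+1)}{\omega_n\log(n+1)}\to0,
\]
because $(\log\omega_n)/\omega_n\to0$ and the remaining terms are $O(\log(n+1))/(\omega_n\log(n+1))\to0$. Similarly $n\log(n+1)/M_n=1/\omega_n\to0$, and $n/M_n\to0$. Hence $nL_n/M_n\to0$, so
\[
\vartheta_n=2\left(\frac{c_+nL_n}{c_-M_n}\right)^{1/s}\to0.
\]

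In particular $\vartheta_n<1$ eventually, so $a_\star=\liminf_n\alpha_n(1-\vartheta_n)=\liminf_n\alpha_n=a$, the last equality using that $\alpha_n\in(0,1]$ is bounded while $1-\vartheta_n\to1$. Applying \Cref{thm:nested-quasi-uniform} concludes the argument. I expect no real obstacle here; the only delicate step is the bound on $\log M_n$ above, which handles budgets $M_n$ that may grow much faster than $n\log n$.
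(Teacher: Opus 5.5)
Your proposal is correct and follows the paper's proof: choose summable $\delta_n=(n+1)^{-p}$ (you take $p=2$, the paper allows any $p>1$), show $nL_n/M_n\to0$ by controlling $\log M_n$ against $M_n/n$ (the paper via $A_n=M_n/n$ and $\log A_n/A_n\to0$, you via $\omega_n$ and $\log\omega_n/\omega_n\to0$), deduce $\vartheta_n\to0$ and hence $a_\star=a$, and apply \Cref{thm:nested-quasi-uniform}.
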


\begin{proof}
Fix any $p>1$ and take $\delta_n=(n+1)^{-p}$. Put $A_n:=M_n/n$. Since
$A_n/\log(n+1)\to\infty$ and
$\log A_n/A_n\to0$, we have
\[
    \frac{nL_n}{M_n}
    =
    O\left(
        \frac{\log n+\log A_n}{A_n}
    \right)
    \longrightarrow0.
\]
Hence $\vartheta_n\to0$, and the result follows from
\Cref{thm:nested-quasi-uniform}.
\end{proof}

\begin{corollary}[Optimal limiting constant on Euclidean domains]
\label{cor:nested-optimal-constant}
In addition to the assumptions of \Cref{thm:nested-supercritical}, suppose that $\Xcal\subset\RR^d$ is compact with positive $d$-dimensional
Lebesgue measure and that the metric is induced by a norm on $\RR^d$.
If $\alpha_n\to1$, then
\begin{equation}
\limsup_{n\to\infty}
\rho_{\Xcal}(X_n)
=
2
\quad
\text{almost surely}.
\label{eq:nested-optimal-limsup}
\end{equation}
\end{corollary}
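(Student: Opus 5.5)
The equality will come from two inequalities holding on a common event of probability one.

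\emph{Upper bound.} The hypotheses of \Cref{thm:nested-supercritical} are assumed, together with $\alpha_n\to1$, so \eqref{eq:nested-limsup-at-most-two} applies directly. Hence $\limsup_{n\to\infty}\rho_{\Xcal}(X_n)\le 2$ on an event $\Omega_1$ with $\PP(\Omega_1)=1$. Nothing further is needed on this side.

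\emph{Lower bound.} I will invoke the deterministic result of Pronzato and Zhigljavsky \cite{PZ23} quoted in the introduction. It says that for every compact $\Xcal\subset\RR^d$ with positive Lebesgue measure and every sequence of distinct points in $\Xcal$, the nested initial segments satisfy $\limsup_{n\to\infty}\rho_{\Xcal}(X_n)\ge2$. Since this is a pathwise statement about an arbitrary sequence, it applies to every realization of \Cref{alg:nested-randomized-fps} in which the selected points are pairwise distinct. By non-atomicity of $\mu$, the candidates $Y_j$ are pairwise distinct on an event $\Omega_2$ of probability one. On $\Omega_2$ the algorithm always selects a new point, as noted after \Cref{alg:nested-randomized-fps}, so the sequence $(\bsx_n)$ consists of distinct points. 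On $\Omega_1\cap\Omega_2$ both inequalities hold, and \eqref{eq:nested-optimal-limsup} follows.

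\emph{Main obstacle.} The delicate step is checking that the cited lower bound covers the present setting. First, the statement allows the metric to come from an arbitrary norm on $\RR^d$, so I would verify that the argument in \cite{PZ23} is stated for general norms, or at least does not use features specific to the Euclidean norm. Second, the same $\Xcal$ must serve both bounds, which means it must be $s$-regular with $s=d$ under normalized Lebesgue measure and that norm; this is already part of the hypotheses of \Cref{thm:nested-supercritical}. If \cite{PZ23} only covers the Euclidean case, I would reproduce its volume argument, whose idea is as follows. Suppose $\rho_{\Xcal}(X_n)\le2-\eta$ for all large $n$. Then $h_{\Xcal}(X_n)<2q(X_n)$, so the insertion distances $d(\bsx_{n+1},X_n)\ge 2q(X_{n+1})$ are comparable to $h_{\Xcal}(X_n)$ with a strict gap below $2$. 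Comparing the volume of the disjoint balls of radius $q$ at a level $n$ with that at level $2n$ should then contradict the scale estimates in \Cref{prop:natural-scale}. The normalized Lebesgue measure is translation-invariant and scales exactly as $r^d$ for any norm, which is what this volume comparison needs.
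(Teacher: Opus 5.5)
Your proposal is correct and matches the paper's proof: the upper bound is \eqref{eq:nested-limsup-at-most-two} from \Cref{thm:nested-supercritical}, and the lower bound is \cite[Theorem~1.1]{PZ23} applied pathwise; your check that the selected points are distinct on the non-atomicity event spells out a detail the paper leaves implicit. Your fallback volume sketch is not needed, and as stated it misses the mechanism, since a single comparison of levels $n$ and $2n$ via \Cref{prop:natural-scale} gives only constant-factor information; the norm question can instead be settled directly: if $\rho_{\Xcal}(X_k)\le 2-\eta$ for all $k\ge k_0$, then $d(\bsx_{k+1},X_k)\le h_{\Xcal}(X_k)\le (2-\eta)q(X_k)=(1-\eta/2)\min_{j<k}d(\bsx_{j+1},X_j)$, so the insertion distances, and with them $h_{\Xcal}(X_k)$, decay geometrically, contradicting $h_{\Xcal}(X_k)\ge (c_+k)^{-1/s}$, which uses nothing about the norm beyond $s$-regularity.
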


\begin{proof}
The upper bound follows from \Cref{thm:nested-supercritical}. On the other hand, \cite[Theorem~1.1]{PZ23} states that every nested sequence in a compact positive-volume subset of $\RR^d$ satisfies
\[
\limsup_{n\to\infty}
\rho_{\Xcal}(X_n)\ge2.
\]
This lower bound applies pathwise to every realization of the randomized construction. Combining the two inequalities proves the result.
\end{proof}

\begin{remark}[Examples of supercritical budgets]
\label{rem:nested-budget-examples}
The following candidate budgets satisfy \eqref{eq:nested-supercritical-budget}: $M_n=n^{1+\beta}$ and $M_n=n(\log n)^{1+\beta}$ with some $\beta>0$. For polynomially small $\delta_n$, the corresponding deterministic relative-error bounds have the orders
\[
\begin{array}{rcl}
M_n=n^{1+\beta}
&:&
\displaystyle
\vartheta_n
=
O\left[
\left(
\frac{\log n}{n^\beta}
\right)^{1/s}
\right],
\\[3mm]
M_n=n(\log n)^{1+\beta}
&:&
\displaystyle
\vartheta_n
=
O\left(
(\log n)^{-\beta/s}
\right).
\end{array}
\]
Thus the original choice of a quadratic candidate pool is more than sufficient, while the nearly critical choice $n(\log n)^{1+\beta}$ already yields the optimal limiting mesh-ratio constant.
\end{remark}

\section{Numerical experiments}
\label{sec:numerics}

We numerically investigate the principal predictions of the theoretical analysis: the iid covering-radius scale $(\log M/M)^{1/s}$, the critical candidate-pool order $M\asymp N\log N$, the quality--cost trade-off for approximate FPS, and the behavior of the nested construction under subcritical, critical, and supercritical candidate budgets. All domains considered below are two-dimensional subsets of $\RR^2$ equipped with the Euclidean metric and normalized area measure, so $s=2$ throughout. The first four experiments are conducted on the unit square $\Xcal=[0,1]^2$, while the final experiment additionally considers two nonrectangular domains.

All FPS constructions are initialized at the first candidate in the generated stream. Within each repetition, the same underlying candidate stream is used whenever several candidate budgets or algorithms are compared, thereby yielding paired comparisons. For a point set in the unit square, the separation radius is computed directly from the pairwise distances. The covering radius is computed, up to floating-point roundoff, by augmenting the point set with its $3\times3$ reflected copies across the sides of the square, constructing the Delaunay triangulation of the reflected set, and examining the circumcenters lying in $[0,1]^2$ together with the four corners. Unless otherwise stated, solid curves represent empirical medians, and shaded regions represent the empirical $10\%$--$90\%$ quantile bands.

All numerical experiments were performed in MATLAB Online using MATLAB R2026a (The MathWorks, Inc.).

\subsection{Random covering and the candidate-pool threshold} \label{subsec:numerical-threshold} 
We first examine the covering radius $\varepsilon_M=h_{\Xcal}(\Ycal_M)$ of an iid candidate cloud. For $M=2^6,2^7,\ldots,2^{13}$, we generate $30$ independent candidate sets and plot the normalized quantity $\varepsilon_M\sqrt{M/\log M}$ in \Cref{fig:covering-scaling}. Its empirical median remains between approximately $0.72$ and $0.85$ over the entire range of $M$, while the empirical quantile band remains bounded. This is consistent with the random-covering order $\varepsilon_M \asymp_{\PP} \left(\log M/M\right)^{1/2}$ proven in \Cref{cor:candidate-covering-lower}.

\begin{figure}[t] 
\centering 
\includegraphics[width=0.6\textwidth]{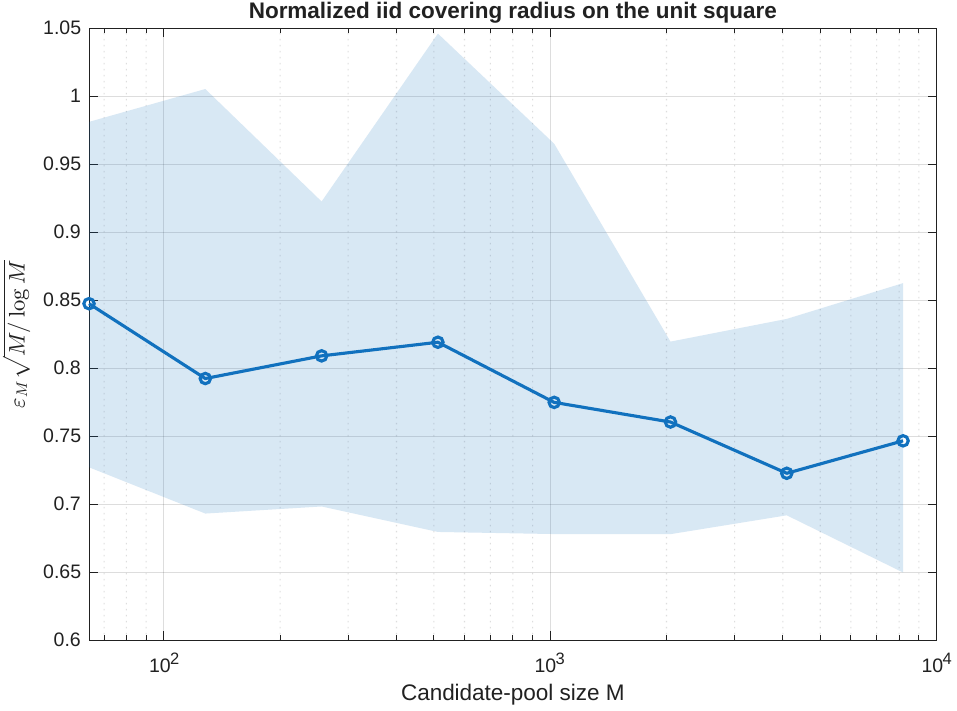} 
\caption{The normalized covering radius $\varepsilon_M\sqrt{M/\log M}$ of $M$ iid uniform points in $[0,1]^2$. The solid curve is the empirical median over $30$ independent repetitions, and the shaded region is the empirical $10\%$--$90\%$ quantile band.} 
\label{fig:covering-scaling} 
\end{figure} 

We next study the candidate-pool threshold directly. We take $N\in\{32,64,128,256,512\}$ and compare the six schedules 
\[ M=N,\, 2N,\, N\sqrt{\log N},\, N\log N,\, 4N\log N,\, N(\log N)^{3/2}.\] 
For each $N$, the largest required iid stream is generated first, and its initial segments are used for the smaller candidate pools. Exact FPS is then applied to select $N$ points. Each setting is repeated $30$ times. 

The left panel of \Cref{fig:candidate-threshold} shows the resulting mesh ratios. With $M=N$, no thinning takes place, and the median mesh ratio grows from approximately $28.5$ at $N=32$ to $151.6$ at $N=512$. The subcritical schedule $M=2N$ exhibits a gradual increase, from approximately $4.30$ to $5.05$. The growth predicted for $M=N\sqrt{\log N}$ is only of logarithmic order and is not yet clearly visible over the present range. In contrast, the critical and supercritical schedules remain uniformly small: the median mesh ratios at $N=512$ are approximately $3.02, 2.38, 2.48$ for $M=N\log N$, $M=4N\log N$, and $M=N(\log N)^{3/2}$, respectively. 

\begin{figure}[t] 
\centering 
\includegraphics[width=\textwidth]{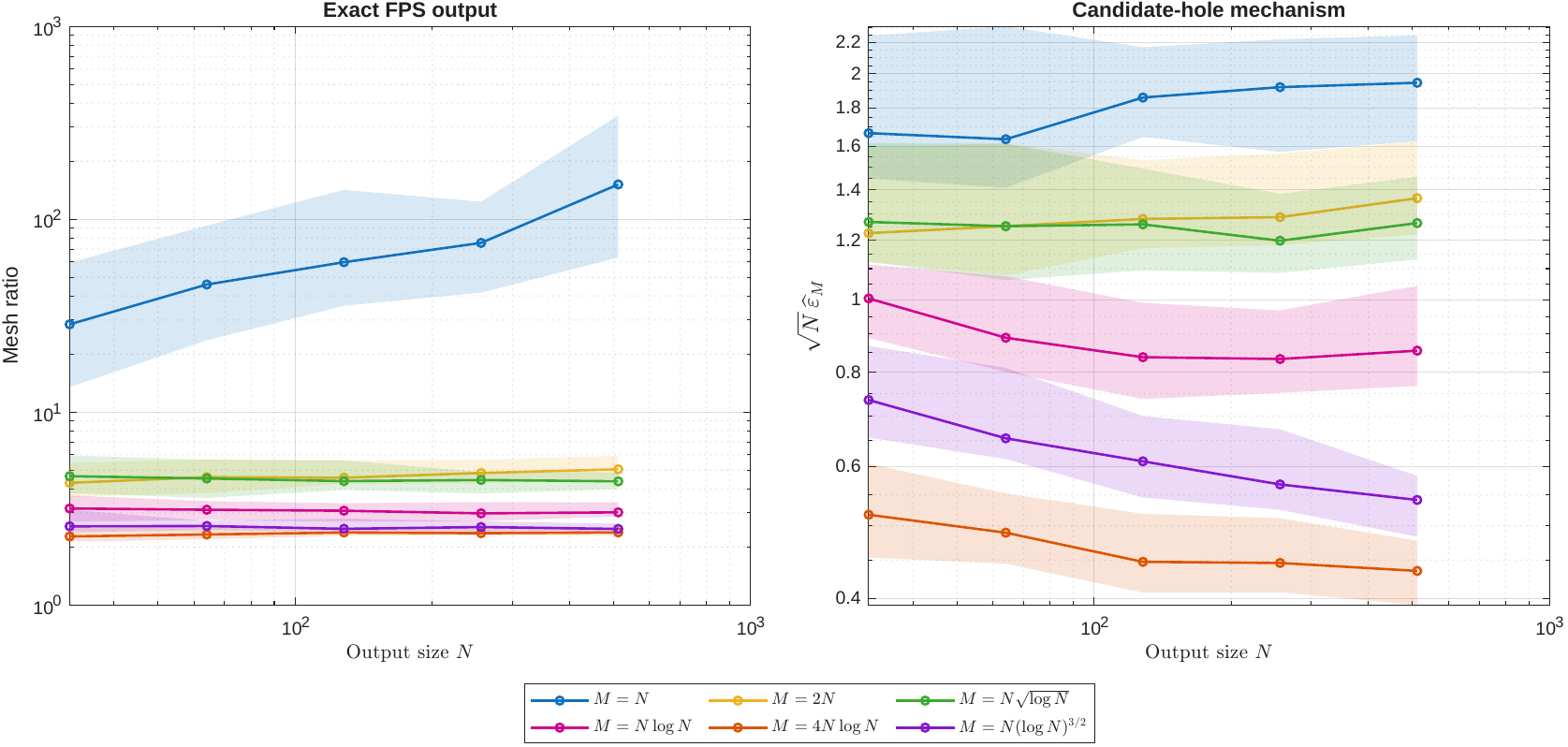} 
\caption{ Candidate-pool threshold for exact FPS on $[0,1]^2$. Left: mesh ratio of the selected $N$-point set. Right: the candidate-hole quantity $\sqrt N\,\widehat{\varepsilon}_M$, with $\widehat{\varepsilon}_M$ evaluated on a $401\times401$ validation grid. Curves and shaded regions show empirical medians and $10\%$--$90\%$ quantile bands over $30$ repetitions. } 
\label{fig:candidate-threshold} 
\end{figure} 

To visualize the mechanism behind the lower bound of \Cref{thm:arbitrary-thinning-lower}, the right panel plots $\sqrt{N}\,\widehat{\varepsilon}_M$, where $\widehat{\varepsilon}_M$ is the candidate covering radius evaluated on a $401\times401$ validation grid. Since $\sqrt{N}\,\varepsilon_M \asymp_{\PP}\left((N\log M)/M\right)^{1/2}$, this quantity is expected to grow, although possibly very slowly, for the three subcritical schedules, to remain of constant order when $M\asymp N\log N$, and to decrease under supercritical oversampling. The numerical results are broadly consistent with these predictions. The curve for $M=N$ is the largest and shows a clear upward tendency, whereas the theoretically slower growth for $M=2N$ and $M=N\sqrt{\log N}$ is difficult to distinguish over the present range of~$N$. The two critical schedules remain approximately of constant order, with the larger candidate pool $M=4N\log N$ producing substantially smaller holes than $M=N\log N$, while the supercritical curve $M=N(\log N)^{3/2}$ decreases overall.

Over the tested range, $N(\log N)^{3/2}$ is still smaller than $4N\log N$. Hence, the finite-sample ordering of these two curves need not coincide with their asymptotic classification as critical and supercritical schedules. 

\subsection{Quality and cost of approximate FPS} \label{subsec:numerical-approximate} 

We now compare direct exact FPS with the multiscale method of \Cref{alg:multiscale-fps}. We use $N\in\{64,128,256,512,1024\}$, $M=\left\lceil4N\log N\right\rceil$, and $\beta\in \{1.02, 1.05, 1.10, 1.25, 1.50, 2.00\}$. Each experiment is repeated $15$ times using the same candidate cloud for all values of $\beta$. By the interpretation in \Cref{subsec:metric-net-mis}, the same experiment also illustrates the hierarchical-MIS viewpoint of the multiscale construction at a prescribed output cardinality. 

The upper-left panel of \Cref{fig:approximate-quality-cost} shows the mesh ratio at $N=1024$. The empirical median is approximately $2.39$ for exact FPS and increases to $2.42, 2.51, 2.62, 2.85, 3.48, 4.46$ as $\beta$ ranges over the six values above. Thus, the geometric quality deteriorates smoothly as the farthest-point requirement is relaxed. The observed increase relative to exact FPS is modest for moderate values of $\beta$. \Cref{eq:multiscale-mesh-bound} multiplies the common deterministic upper bound by $\beta$; it does not provide a pointwise comparison with the mesh ratio of the exact FPS output on the same candidate set.

\begin{figure}[t] 
\centering 
\includegraphics[width=\textwidth]{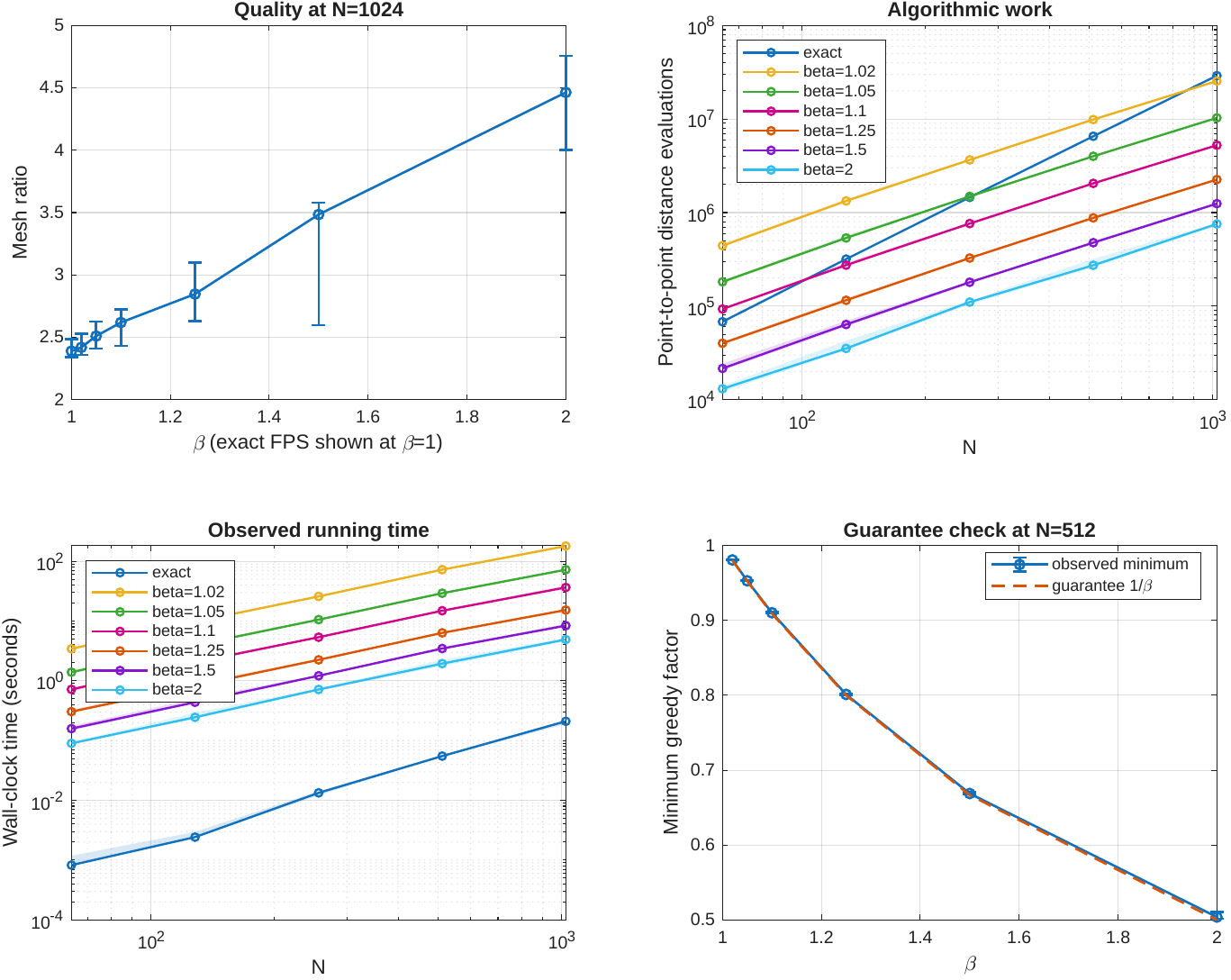} 
\caption{Quality and cost of multiscale approximate FPS with $M=\lceil4N\log N\rceil$. Upper left: mesh ratio at $N=1024$, with exact FPS displayed at $\beta=1$. Upper right: number of point-to-point distance evaluations. Lower left: observed MATLAB running time. Lower right: empirical minimum greedy factor at $N=512$ compared with the guarantee $1/\beta$. Medians and empirical $10\%$--$90\%$ ranges are computed from $15$ repetitions. } 
\label{fig:approximate-quality-cost} 
\end{figure} 

The upper-right panel reports the number of point-to-point distance evaluations. The direct exact implementation uses exactly $MN$ evaluations. By contrast, the multiscale method repeatedly scans the candidate cloud over $K$ geometric levels, where $K$ increases rapidly as $\beta$ approaches~$1$. Consequently, for small $N$ and $\beta$ close to~$1$, the overhead of the repeated multiscale scans can exceed that of direct exact FPS, as seen in the figure. This is a finite-size crossover and is consistent with the complexity bounds: the multiscale method becomes advantageous only when the reduction from $N$ full update sweeps to a much smaller number of geometric levels outweighs the per-level overhead.

For larger $\beta$, the reduction is already substantial. At $N=1024$, direct exact FPS uses approximately $2.91\times10^7$ evaluations, whereas the corresponding medians for $\beta=1.25$, $1.50$, and $2.00$ are approximately $2.25\times10^6$, $1.25\times10^6$, and $7.55\times10^5$, respectively. These represent reductions by factors of approximately $13$, $23$, and $38$.

The lower-right panel verifies the approximation guarantee directly. For every approximate ordering with $N\le512$, we replay the ordering and compute 
\[ \widehat{\alpha}_{\min} := \min_{1\le n<N} \frac{d(\bsx_{n+1},X_n)} {h_{\Ycal_M}(X_n)}. \] 
The observed values lie almost exactly on, and always above, the theoretical lower bound $1/\beta$. This provides a direct numerical check of \eqref{eq:multiscale-approximation}. 

The lower-left panel reports wall-clock times measured in MATLAB Online. These do not mirror the distance-evaluation counts: the direct exact routine is heavily vectorized, whereas the multiscale prototype uses repeated scalar neighbor queries and MATLAB hash-table operations. Consequently, direct exact FPS is faster throughout the tested range in our MATLAB implementation. Moreover, since MATLAB Online runs on cloud-based computational resources without a fixed hardware configuration, the reported wall-clock times should be regarded as implementation-specific indicative values rather than hardware-independent benchmarks. The number of point-to-point distance evaluations provides the more reproducible measure of algorithmic work. A lower-level implementation would be needed to determine a practical crossover point. We also emphasize that the more sophisticated fast exact algorithm of \cite{HM06} is not implemented here. 

\subsection{Nested randomized construction} \label{subsec:numerical-nested} 
We next test the nested construction of \Cref{sec:nested}. In each of $20$ repetitions, we generate one infinite iid stream up to the largest required index and construct exact nested FPS sequences with the three candidate-budget schedules 
\[ M_n=2n,\, 4n\log(n+1),\, 2n\{\log(n+1)\}^{3/2}. \] 
The mesh ratio and the covering and separation radii are evaluated at $36$ logarithmically spaced prefix lengths between $4$ and $2048$. 

The left panel of \Cref{fig:nested-sequence} clearly separates the subcritical schedule from the two logarithmically oversampled schedules. For $M_n=2n$, the median mesh ratio increases overall and equals approximately $5.70$ at $n=2048$. In contrast, the critical and supercritical schedules remain bounded over the tested range: their median mesh ratios at $n=2048$ are approximately $2.46$ and $2.35$, respectively. The supercritical curve does not show a clear tendency toward~$2$ over the present range of prefix lengths. This is not inconsistent with \Cref{thm:nested-supercritical} and \Cref{cor:nested-optimal-constant}, which concern the asymptotic limit superior rather than monotone convergence of the mesh ratio. Moreover, for the schedule $M_n=2n\{\log(n+1)\}^{3/2}$, the oversampling ratio $M_n/[n\log(n+1)]=2\sqrt{\log(n+1)}$ diverges only slowly. In the present two-dimensional setting, the corresponding deterministic relative-error estimate from \Cref{rem:nested-budget-examples} decays only as $O((\log n)^{-1/4})$. We therefore interpret the experiment as evidence of quasi-uniform behavior under critical and supercritical candidate budgets, but not as numerical verification of the optimal asymptotic constant~$2$.

\begin{figure}[t] 
\centering 
\includegraphics[width=\textwidth]{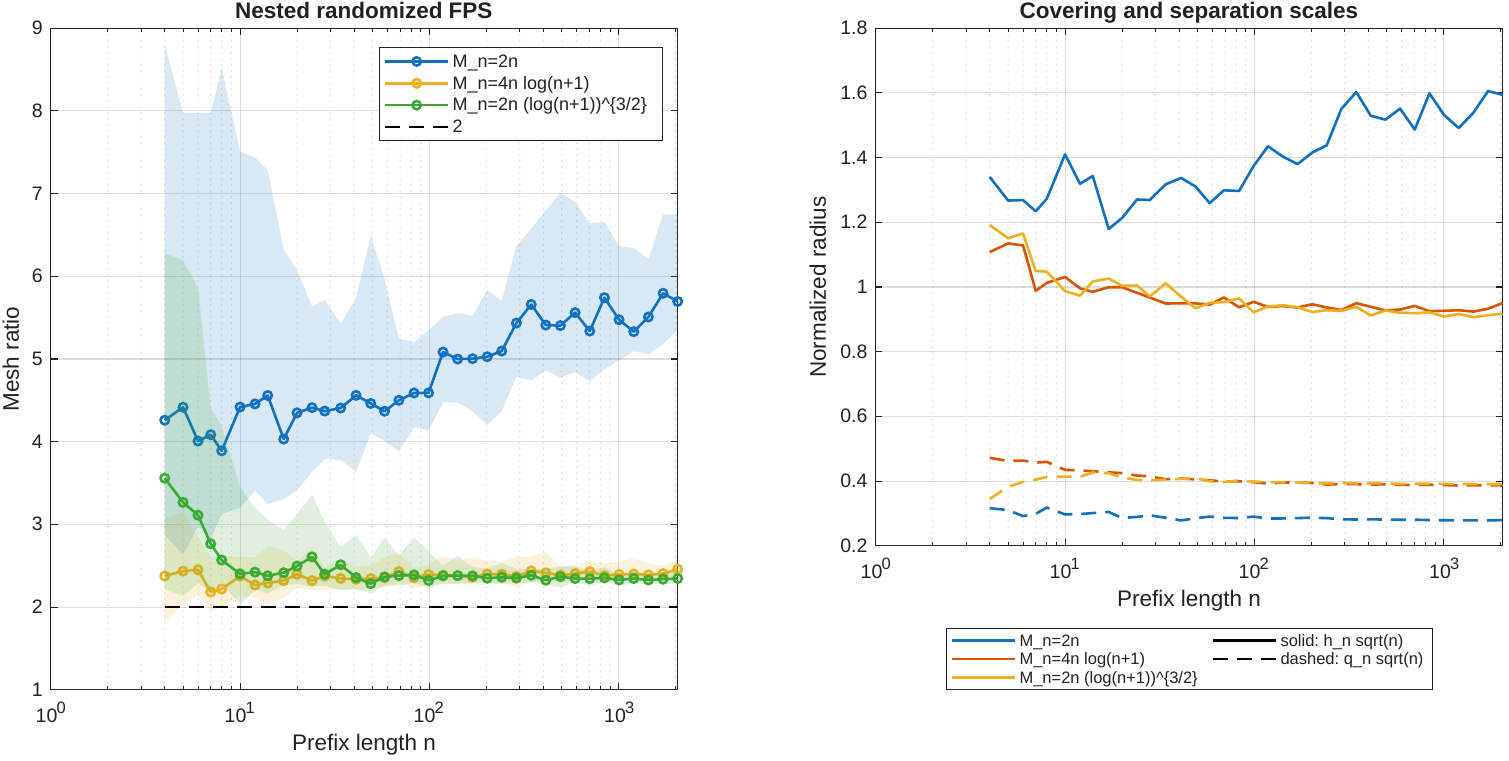} 
\caption{Nested exact FPS on $[0,1]^2$ under subcritical, critical, and supercritical candidate budgets. Left: mesh ratio; solid curves and shaded regions are empirical medians and $10\%$--$90\%$ quantile bands over $20$ repetitions. Right: median scaled covering radii $h_{\Xcal}(X_n)\sqrt n$ (solid) and separation radii $q(X_n)\sqrt n$ (dashed). } 
\label{fig:nested-sequence} 
\end{figure} 

The right panel displays the medians of $h_{\Xcal}(X_n)\sqrt n$ and $q(X_n)\sqrt n$. For the critical and supercritical schedules, both scaled quantities stabilize: at $n=2048$, their median pairs are approximately $(0.95,0.39)$ and $(0.92,0.39)$, respectively. This is the expected quasi-uniform behavior $h_{\Xcal}(X_n)\asymp q(X_n)\asymp n^{-1/2}$. For the subcritical schedule, the corresponding pair is approximately $(1.59,0.28)$: the covering scale becomes substantially larger while the separation scale remains smaller, producing the increasing mesh ratio in the left panel. 


\subsection{Examples on nonrectangular domains} \label{subsec:numerical-domains} Finally, we illustrate the geometry-independent character of the construction on the unit square, the L-shaped domain $[0,1]^2\setminus(1/2,1]\times(1/2,1]$, and the annulus $\{\bsx\in\RR^2 \mid 0.35\le\|\bsx\|_2\le1\}$. For each domain, we generate $M=\left\lceil4N\log N\right\rceil=12430$ candidates and apply exact FPS with $N=500$. The candidates are sampled uniformly with respect to the area measure. 

The resulting designs are shown in \Cref{fig:domain-examples}. The mesh ratio on the square, whose covering radius is computed exactly, is approximately $2.536$. For the L-shaped domain and annulus, the covering radii are evaluated on deterministic validation sets of approximately $150\,000$ points, giving estimated mesh ratios $\widehat{\rho}\approx2.442$ and $2.198$, respectively. Since a finite validation set can underestimate the true covering radius, the latter two values should be interpreted as numerical estimates. More precisely, the validation sets cover the L-shaped domain and the annulus with radii at most $1.59\times10^{-3}$ and $5.18\times10^{-3}$, respectively. Since the function $\bsx\mapsto d(\bsx,X_N)$ is $1$-Lipschitz, the true covering radius of each design exceeds its validation-set estimate by at most the corresponding amount. The true mesh ratios are therefore bounded above by $2.547$ and $2.375$, respectively. The point configurations show that the same sampling-and-FPS procedure adapts directly to nonconvex and multiply connected domains, without requiring a continuous largest-empty-ball solver or an explicit Voronoi construction for the domain. 

\begin{figure}[t] 
\centering 
\includegraphics[width=0.85\textwidth]{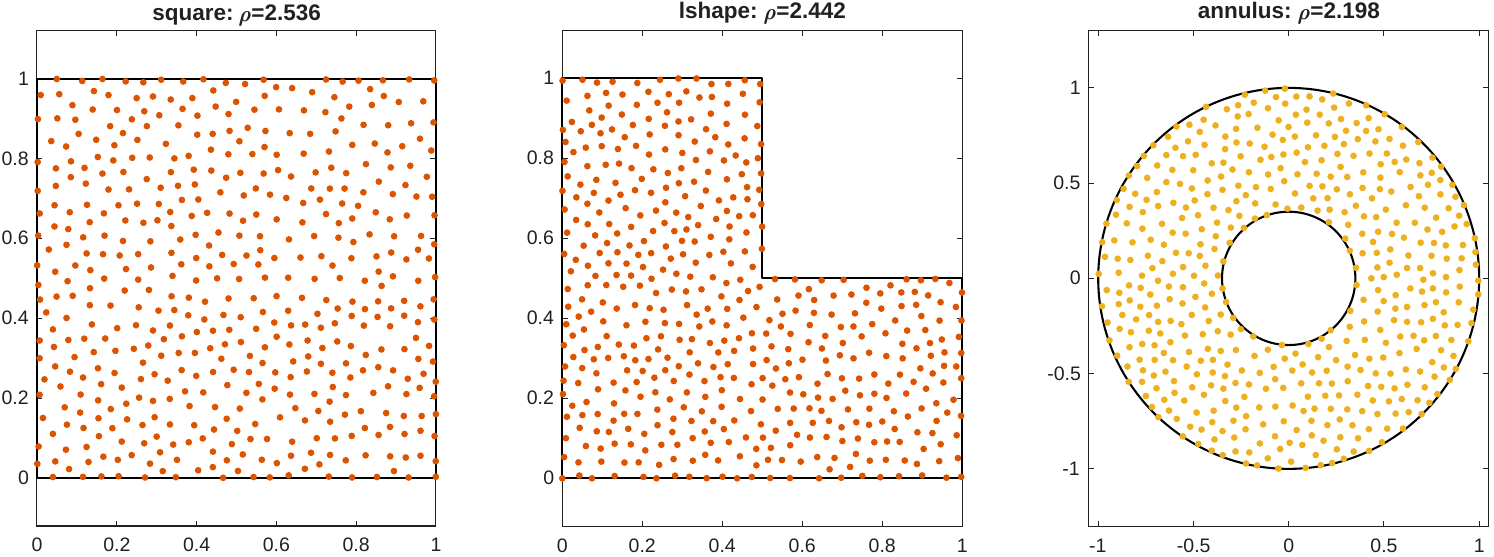} 
\caption{Exact FPS designs with $N=500$ selected from $M=12430$ uniform iid candidates. From left to right: the unit square, an L-shaped domain, and an annulus with inner radius $0.35$. The square mesh ratio is computed exactly; the values displayed for the L-shaped domain and annulus use dense validation sets and are therefore numerical estimates. } 
\label{fig:domain-examples} 
\end{figure} 

\section{Concluding remarks} \label{sec:conclusion} 

We have studied a simple randomized strategy for constructing quasi-uniform designs on compact metric-measure spaces. The method first generates an iid candidate set and then selects a prescribed number of points by farthest-point sampling. Under a two-sided polynomial ball-growth condition, we showed that the candidate-pool size $M=\Theta(N\log N)$ is the sharp order required for obtaining an $N$-point set with bounded mesh ratio. The upper bound is achieved by exact FPS with high probability, whereas the matching lower bound applies to every procedure that is constrained to select its output from the same iid candidate cloud. Thus, the logarithmic oversampling factor is a consequence of the random covering properties of iid samples rather than of the greedy selection rule itself. 

We also quantified the effect of inexact farthest-point searches and showed that the error caused by approximate FPS can be separated from the discretization error of the random candidate set. This leads naturally to fast implementations. In particular, in spaces of bounded doubling dimension, existing greedy-permutation algorithms yield near-linear complexity in the candidate-pool size, while a simple multiscale approximate FPS admits an efficient grid-based implementation on fixed-dimensional Euclidean domains. Hence, at the critical candidate size, quasi-uniform point sets can be constructed with computational cost of order $N\log^2 N$ under the assumptions considered in this paper. We also placed exact and approximate FPS in a common metric-net framework, which identifies the completed levels of the multiscale construction with maximal independent sets and clarifies the complementary fixed-cardinality and fixed-scale viewpoints.

A further advantage of the proposed approach is that it extends naturally to nested constructions. By using a single infinite iid stream together with increasing candidate budgets, we obtained an almost surely quasi-uniform infinite sequence. The candidate-budget order $n\log n$ is again sharp for quasi-uniformity, while supercritical oversampling combined with asymptotically exact FPS gives $\limsup_{n\to\infty}\rho_{\Xcal}(X_n)\le 2$ almost surely. For compact positive-volume subsets of Euclidean space, this upper bound is optimal. 

Several questions remain open. First, although the order $N\log N$ is sharp, the behavior in the critical regime $M=C N\log N$ deserves a more precise analysis. In particular, it would be interesting to determine how the best achievable mesh-ratio bound depends on the oversampling constant $C$, and whether sharp threshold constants can be identified for specific domains. Second, the logarithmic factor is specific to iid candidate generation. Candidate sets obtained from stratified, randomized low-discrepancy, or other well-covering sampling schemes may have a covering radius of order $M^{-1/s}$ and could therefore reduce the required candidate size to order $N$. Developing a general theory that relates candidate-generation quality to the performance of subsequent FPS thinning would be worthwhile. Finally, quasi-uniformity is often used as a geometric condition for stable and accurate scattered-data approximation, kernel interpolation, meshless methods, and computer experiments. Combining the probabilistic construction developed here with quantitative approximation or prediction error bounds would provide a direct link between candidate complexity, computational cost, and downstream numerical performance. Such extensions may also be useful for sequential experimental design, where the nested construction is particularly natural.

\section*{Funding}
The work of T.G.\ was supported by JSPS KAKENHI Grant Number JP26K00620.

\section*{Code and data availability}

The MATLAB code, random seeds, and raw numerical outputs used to generate the figures will be made publicly available upon acceptance.

\section*{Declaration of AI use}

The authors used OpenAI's ChatGPT to assist with the drafting and editing of portions of the manuscript and with checking mathematical arguments and presentation. All AI-assisted content was critically reviewed, verified, and revised by the authors, who take full responsibility for the accuracy and integrity of the manuscript.

\bibliographystyle{siam}
\bibliography{ref}

\end{document}